\newtheorem{thr}{Theorem}
\newtheorem{remark}[thr]{Remark}
\newtheorem{conjecture}[thr]{Conjecture}
\newtheorem{defi}[thr]{Definition}
\newtheorem{q}[thr]{Question}
\newtheorem{prop}[thr]{Proposition}
\newtheorem{examp}{Example}
\newtheorem{claim}{Claim}
\newcommand{\floor}[1]{\left\lfloor#1\right\rfloor}
\newcommand{\h}{\mathcal{H}}
\newcommand{\sizeof}[1]{\lvert#1\rvert}
\DeclareMathOperator{\ex}{ex}
\DeclareMathOperator{\rex}{rex}
\title{Regular Tur\'an numbers and some Gan--Loh--Sudakov-type problems}
\author{
Stijn Cambie
\thanks{Department of Mathematics, Radboud University Nijmegen, Netherlands. 
	Supported by a Vidi grant (639.032.614) of the Netherlands Organisation for Scientific Research (NWO). Email addresses: \protect\href{mailto:stijn.cambie@hotmail.com}{\protect\nolinkurl{stijn.cambie@hotmail.com}}, \protect\href{mailto:r.deverclos@math.ru.nl}{\protect\nolinkurl{r.deverclos@math.ru.nl}}, \protect\href{mailto:ross.kang@gmail.com}{\protect\nolinkurl{ross.kang@gmail.com}}.}
\and
R\'emi de Joannis de Verclos\footnotemark[1]
\and
Ross J. Kang\footnotemark[1]
}
\begin{document}
	
	\definecolor{xdxdff}{rgb}{0.49019607843137253,0.49019607843137253,1.}
	\definecolor{ududff}{rgb}{0.30196078431372547,0.30196078431372547,1.}
	
	\tikzstyle{every node}=[circle, draw, fill=black!50,
	inner sep=0pt, minimum width=4pt]

\maketitle
\parindent=4pt

\begin{abstract}
	Motivated by a Gan--Loh--Sudakov-type problem, we introduce the regular Tur\'an numbers, a natural variation on the classical Tur\'an numbers for which the host graph is required to be regular.	
	Among other results, we prove a striking supersaturation version of Mantel's theorem in the case of a regular host graph of odd order.
	We also characterise the graphs for which the regular Tur\'an numbers behave classically or otherwise.
\end{abstract}

\section{Introduction}\label{sec:intro}

Mantel's theorem~\cite{Man07}, Tur\'an's theorem~\cite{Tur41} and the Erd\H{o}s--Stone theorem~\cite{ES46} are foundational in extremal graph theory. The extremal graphs seem close to being regular. For example, the construction of $C_4$-free graphs with many edges using Sidon sets (see e.g.~\cite{DTF18}) is regular for even order and has difference $1$ between minimum and maximum degree for odd order. Nevertheless, the restriction to regular graphs forces that the maximum number of edges, or, equivalently, maximum degree, when avoiding certain $3$-chromatic graphs depends heavily on the parity of the order.
To make this phenomenon more concrete, we use the following terminology.

\begin{defi}
The {\em regular Tur\'an number} of a graph $H$ is
$$\ex_r(n, H) = \max\{k: \lvert V (G)\rvert = n, G\mbox{ is }k\mbox{-regular and does not contain }H\mbox{ as a subgraph}\}.$$
For a family of graphs $\h$, $\ex_r(n, \h)$ is defined similarly, so $G$ must not contain any $ H \in \h $.
\end{defi}

The following result, focusing on odd cycles, is most illustrative.

\begin{thr}\label{thr:ex_r_OddCycles}
	For fixed $\ell\ge 3$ and $H=C_{2\ell-1}$, it holds for sufficiently large $n$ that
	$$\ex_r(n, H)=\begin{cases}
	 \frac n2 & \mbox{ if n is even}\\
	2 \left \lfloor \frac n{2\ell+1} \right \rfloor & \mbox{ if n is odd.}		\end{cases}$$
\end{thr}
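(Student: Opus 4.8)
The plan is to handle the lower and upper bounds separately, splitting on the parity of $n$; the lower bounds and the even-order upper bound are routine, and the real content lies in the odd-order upper bound. For the lower bound when $n$ is even, $K_{n/2,n/2}$ is $\tfrac n2$-regular and bipartite, hence contains no odd cycle and gives $\ex_r(n,H)\ge\tfrac n2$. When $n$ is odd, set $m=\floor{n/(2\ell+1)}$ and take the balanced blow-up of $C_{2\ell+1}$: replace each vertex by an independent set of size $m$ and join consecutive parts completely. This graph is $2m$-regular, and since the projection to $C_{2\ell+1}$ is a homomorphism, every odd cycle maps to an odd closed walk in $C_{2\ell+1}$ and so has length at least $2\ell+1$; in particular it is $C_{2\ell-1}$-free. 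This settles $(2\ell+1)\mid n$, and the $r=n-(2\ell+1)m\le 2\ell$ surplus vertices I would absorb by a bounded local modification. For the even-order upper bound, if $G$ is $k$-regular and $C_{2\ell-1}$-free with $k>\tfrac n2$, then $\delta(G)=k\ge \tfrac n2+1$; Bondy's pancyclicity theorem says $\delta\ge\tfrac n2$ forces $G$ to be pancyclic or equal to $K_{n/2,n/2}$, and the strict inequality rules out the exception, so $G$ contains $C_{2\ell-1}$ once $n\ge 2\ell-1$, a contradiction.

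The heart is the odd-order upper bound. Here $k$-regularity on an odd number of vertices forces $k=2m$ even, and a $2m$-regular graph of odd order cannot be bipartite, so $G$ contains an odd cycle; let $g$ be its odd girth. I would split on $g$. If $g\ge 2\ell+1$, then $G$ avoids every odd cycle of length at most $2\ell-1$, and the generalized Andr\'asfai--Erd\H{o}s--S\'os theorem (the case $\ell=2$ being the classical triangle-free statement with threshold $2n/5$) asserts that a non-bipartite graph of odd girth at least $2\ell+1$ has $\delta(G)\le \tfrac{2n}{2\ell+1}$. Thus $2m\le \tfrac{2n}{2\ell+1}$, giving $m\le\floor{n/(2\ell+1)}$; this matches the balanced blow-up exactly and is the tight regime.

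If instead $g\le 2\ell-3$, then $G$ contains a short odd cycle but still no $C_{2\ell-1}$. Here I would argue that a graph carrying an odd cycle of bounded length together with sufficiently large minimum degree must realise all odd lengths in an interval reaching $2\ell-1$, via the theory of consecutive odd cycles (for instance after passing to a $2$-connected non-bipartite subgraph), thereby producing a forbidden $C_{2\ell-1}$. Consequently $2m$ can only exceed a constant $c(\ell)$ at the cost of creating $C_{2\ell-1}$, so $m$ is bounded purely in terms of $\ell$; for $n$ large this constant is far below $\tfrac{2n}{2\ell+1}$, and the bound $m\le\floor{n/(2\ell+1)}$ holds in this case as well. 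Combining the two cases with the blow-up construction yields the claimed value for odd $n$.

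The main obstacle I anticipate is precisely this second odd-order case $g\le 2\ell-3$: converting the heuristic ``large minimum degree plus a short odd cycle forces $C_{2\ell-1}$'' into a rigorous statement requires care with connectivity, since a regular graph need not be $2$-connected while the consecutive-odd-cycle results are naturally phrased for $2$-connected non-bipartite graphs. I would need to locate a block, or a dense non-bipartite subgraph, in which the minimum degree is still a large enough function of $\ell$ for the cycle-growing machinery to apply. A secondary technical point is the construction for odd $n$ with $(2\ell+1)\nmid n$: the at most $2\ell$ surplus vertices must be incorporated without lowering any degree below $2m$ or introducing a $C_{2\ell-1}$, which is exactly the place where the hypothesis that $n$ be sufficiently large is used.
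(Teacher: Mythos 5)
Your overall framework is sound in several places: the even-order bound via Dirac plus Bondy's pancyclicity theorem works; your Case 1 (odd girth at least $2\ell+1$) is exactly the paper's use of Theorem~\ref{thr:AES} together with the parity of $k$; and the lower-bound constructions are essentially the paper's, although ``absorb the surplus vertices by a bounded local modification'' hides real work --- the paper instead distributes the surplus $y$ over the classes of the $C_{2\ell+1}$-blow-up (sizes $x$, $x+y$, $x-y$ in a pattern depending on $\ell \bmod 2$) and deletes $y$ perfect matchings between two adjacent classes to restore exact regularity. The genuine gap is your Case 2, odd girth at most $2\ell-3$.

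The lemma carrying that case --- ``a short odd cycle plus minimum degree above a constant $c(\ell)$ forces a $C_{2\ell-1}$'' --- is false. Take $t$ arbitrarily large, $M=\ell$ and $N=4\ell M$; let $G_0$ be the blow-up of the even cycle $C_{2N}$ into independent classes $V_1,\dots,V_{2N}$ of size $t$, and attach a clique $K_{2\ell-2}$ by joining its $i$-th vertex $w_i$ to all of $V_{2iM}$ for $1\le i\le 2\ell-2$. The resulting graph has minimum degree $\min\{2t,\,t+2\ell-3\}$, hence larger than any prescribed constant, and contains triangles; yet it has no $C_{2\ell-1}$: cycles inside the clique have length at most $2\ell-2$; cycles inside the blow-up are even; and a cycle meeting both parts either enters and leaves the blow-up through the same class $V_{2iM}$ (and is then even) or contains a blow-up segment joining two distinct attachment classes, hence has length at least $2M+2>2\ell-1$. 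This example also shows why ``consecutive odd cycles'' theorems (Bondy--Vince, Liu--Ma) cannot rescue the argument: they produce an arithmetic progression of odd cycle lengths with no control over where it starts, and in a $C_{2\ell-1}$-free graph the progression can sit entirely above $2\ell-1$, exactly as here (odd lengths up to $2\ell-3$, then nothing until beyond $2\ell+2$). Results that do control the whole spectrum upward from the girth (weak pancyclicity: Brandt, Brandt--Faudree--Goddard) need minimum degree around $n/4$ or more, which exceeds your available bound $\frac{2n}{2\ell+1}$ for every $\ell\ge 4$. What your proof never uses --- and what is indispensable --- is that a $k$-regular graph of odd order is \emph{far} from bipartite: at least $k/2$ edge deletions are needed to bipartize it, whereas the counterexample above is within $O(\ell^2)$ edges of bipartite. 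The paper's upper bound runs precisely through this: its Theorem~\ref{thr:containing_odd_cycle} shows that minimum degree greater than $\frac{2n}{2\ell+1}$, large order, and ``no $\ell^3$ edge deletions make $G$ bipartite'' together force a $C_{2\ell-1}$, and its proof does not grow a short odd cycle but argues by contradiction: by H\"aggkvist's Theorem~\ref{thr:Haggkvist2} a $C_{2\ell-1}$-free graph under this degree condition has no long odd cycles at all, and then a block decomposition combined with Theorem~\ref{thr:VZ77} and the path lemma Theorem~\ref{thr:LM18} manufactures a long odd cycle out of the far-from-bipartiteness. Any repair of your Case 2 needs a hypothesis of this far-from-bipartite type, verified from regularity and odd order, rather than a minimum-degree condition alone.
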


This is a generalisation of a regular version of Mantel's theorem and we prove it in Section~\ref{sec:exr_oddcycles}.

With an extremal result in hand, one may naturally pursue supersaturation results, as in e.g.~\cite{Erd62,LS83, Raz08, Rei16}.
The classical such result for Mantel's theorem states that the minimum number of triangles in a graph is linear in the number of additional edges. More precisely, if a graph has at least $\left \lfloor \frac{n^2}4 \right\rfloor +\ell$ edges for some $0\le \ell < \frac n2$, then it must contain at least $g_3(t_2(n)+\ell)=\ell \left \lfloor \frac n2 \right \rfloor$ triangles.
If a graph has at least $ \gamma n^2$ edges for $\gamma > \frac 14$, then the graph contains at least $\Omega_{\gamma}(n^3)$ triangles.
In particular if the average degree is $\frac n2 + 1$, there are at least $\Omega(n^2)$ triangles, while there are at least $\Omega(n^3)$ triangles when the average degree is at least $(0.5+ \gamma)n$ for some constant $\gamma > 0$.

On the other hand, supersaturation differs for regular Mantel's theorem, as the minimum number of triangles in a $k$-regular graph with odd order $n$ is $\Theta(n^2)$ for every $\ex_r(n,C_3)<k< \ex_r(n,C_3)+\frac n{10}$. The following is shown in Section~\ref{sec:satRegMantel}.
\begin{thr}[Supersaturated regular Mantel's theorem]\label{thr:quadraticsaturationK3}
	Let $G$ be a $k$-regular graph on $n$ vertices.
	If $n$ is odd and $k>2\floor{\frac{n}{5}}$,
	then $G$ contains at least $\frac{1}{300}n^2$ triangles.
\end{thr}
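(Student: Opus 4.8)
The plan is to exploit the tension between regularity and the odd order: a $k$-regular graph on an odd number of vertices cannot have a perfect matching, and more importantly, $k$-regularity forces every vertex to have high degree, so that a triangle-free graph would have to be bipartite-like, which clashes with the parity constraint. Concretely, I would fix $n$ odd and $k > 2\floor{n/5}$, so that roughly $k > 2n/5$, and aim to show that the neighbourhoods are so large that many pairs of adjacent vertices share a common neighbour.

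\medskip

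First I would recall the standard local count: the number of triangles equals $\frac{1}{3}\sum_{uv \in E(G)} |N(u)\cap N(v)|$, so it suffices to show that $\sum_{uv\in E(G)} |N(u)\cap N(v)|$ is of order $n^2$. For a single edge $uv$, inclusion--exclusion gives $|N(u)\cap N(v)| = |N(u)| + |N(v)| - |N(u)\cup N(v)| \ge 2k - n$. Since $k > 2\floor{n/5} \ge 2(n-4)/5$, we get $2k - n > -n/5 - \text{const}$, which is unfortunately \emph{negative}, so the naive edge-by-edge bound is too weak and does not even guarantee one triangle. This is the crux of the difficulty: the threshold $k > 2n/5$ is well below $n/2$, so we cannot argue purely locally and must use the parity/oddness globally.

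\medskip

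The approach I would therefore take is a \emph{stability-plus-parity} argument. Suppose for contradiction that $G$ has fewer than $\frac{1}{300}n^2$ triangles. A supersaturation/removal-type statement (or a direct counting argument à la the classical supersaturated Mantel's theorem quoted in the excerpt) should force $G$ to be \emph{close} to triangle-free, hence close to bipartite: one can delete $o(n^2)$ edges to destroy all triangles, and by a stability version of Mantel's theorem the resulting graph is within $o(n^2)$ edges of a complete bipartite graph with parts of size about $n/2$. Now I would bring in regularity together with odd order. In a balanced bipartite-like structure on an \emph{odd} number of vertices, the two parts have sizes $a$ and $n-a$ with $a \ne n-a$; counting edges from each side and using that $G$ is $k$-regular forces $ka \approx k(n-a)$ across the cut, which is impossible for $k$ large unless a positive fraction of edges lie inside one part. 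Each such ``internal'' edge, combined with the large neighbourhoods guaranteed by $k$-regularity (each endpoint has $k$ neighbours, the vast majority on the opposite side), creates many triangles; quantifying this should recover the $\Theta(n^2)$ lower bound and contradict the assumption.

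\medskip

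I expect the main obstacle to be making the parity obstruction quantitatively robust: the clean statement ``odd order rules out a perfect balanced bipartition'' must be upgraded to ``any $k$-regular graph with $k>2\floor{n/5}$ that is within $o(n^2)$ edges of bipartite still contains $\Omega(n^2)$ triangles,'' which requires carefully tracking how the regularity constraint redistributes edges once the bipartition is forced to be slightly unbalanced. An alternative, possibly cleaner, route that sidesteps heavy stability machinery is to argue directly: partition $V(G)$ greedily to minimise the cut, use $k$-regularity to show each part still has order $n/2$, and then use a convexity/counting estimate on the codegrees within the denser side to extract $\Omega(n^2)$ triangles, with the explicit constant $\frac{1}{300}$ falling out of optimising the numeric slack in $k>2\floor{n/5}$. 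I would pursue this direct counting version first, since it is most likely to yield the stated explicit constant.
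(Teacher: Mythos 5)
There is a genuine gap, and it sits exactly where your proposal leans on ``a stability version of Mantel's theorem.'' Your graph has roughly $kn/2 \approx n^2/5$ edges, which is \emph{below} the Mantel threshold $n^2/4$, so edge-count stability gives you nothing: a graph at this density with few (or no) triangles need not be anywhere close to bipartite. The canonical obstruction is the balanced blow-up of $C_5$: it is $\tfrac{2n}{5}$-regular, triangle-free, and requires $\Theta(n^2)$ edge deletions to become bipartite. Note that the theorem's hypothesis $k > 2\floor{n/5}$ sits \emph{exactly} at this example's degree, so any valid proof must exploit the minimum-degree threshold (in the spirit of the Andr\'asfai--Erd\H{o}s--S\'os theorem, Theorem~\ref{thr:AES}), not the edge-density threshold of Mantel. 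Your alternative ``greedy min-cut partition'' route collapses on the same example: there is no reason a minimum cut of a $C_5$-blow-up-like graph yields two parts of size about $n/2$ with few internal edges. In short, the step ``few triangles $\Rightarrow$ close to bipartite with balanced parts'' is precisely the hard content of the theorem, and your proposal assumes it rather than proves it.

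What the paper does instead is prove a robust degree-threshold statement from scratch. Let $S$ be the vertices lying in at least $n/10$ triangles; if $\sizeof{S}\ge n/10$ you are done, so assume not. An inclusion--exclusion count on the three neighbourhoods of a triangle shows every triangle meets at least $n/5-2$ others on an edge, whence every triangle has two vertices in $S$; then a degree/double-counting argument shows $G\setminus S$ contains no $C_5$ (a $5$-cycle $C$ would force $\sizeof{N(C)}\ge 5k/2 > n$), and a minimal-odd-cycle argument upgrades this to $G\setminus S$ being bipartite. Only then does the parity endgame you sketched take over, and here your quantitative claim also needs correction: regularity plus oddness of $n$ forces only $\Omega(n)$ edges inside the larger side of the partition (the paper gets at least $k/2$ edges inside $S_2$ from $2e_2 + e = k\sizeof{B\cup S_2} \ge k(n+1)/2$ versus $e \le k(n-1)/2$), not ``a positive fraction of edges''; the $\Theta(n^2)$ triangle count then comes from each such edge lying in more than $n/5$ triangles, since both endpoints have their neighbourhoods confined to $A\cup S$. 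So your parity intuition matches the paper's final step, but the route to near-bipartiteness --- the heart of the proof --- is missing and cannot be repaired by Mantel-type stability.
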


Returning to regular Tur\'an numbers, we show in Section~\ref{sec:reg_genThrs} that for those $H$ for which $\chi(H)\not=3$ these numbers behave as in the classical Erd\H{o}s--Stone theorem, i.e.~$\lim_{n\to\infty}\frac{1}{n}\ex_r(n,H) = 1-\frac{1}{\chi(H)-1}.$ There remains the question of which $H$ with $\chi(H)=3$ also behave classically, and so not like in Theorem~\ref{thr:ex_r_OddCycles}. In Theorem~\ref{thr:RegularErdosStone3} below, we consider two constructions ---which roughly speaking are adaptations from complete bipartite graphs so that they have an odd number of vertices and are regular--- and prove that they completely describe such $H$.

We describe our point of inspiration for the regular Tur\'an numbers in Section~\ref{sec:givenNM}, that is, some variations on a conjecture of Gan, Loh and Sudakov~\cite{GLS} (now confirmed, cf.~\cite{C19}). In particular, we will consider a variation for which {\em both} order and size are prescribed, after a work by Kirsch and Radcliffe~\cite{KR}.
Although it does not behave as nicely as the original problem, the regular case (which one might consider as the most interesting of these variations) can be almost completely resolved with regular Tur\'an numbers.
A few other possible generalisations of the problem initiated in~\cite{GLS} are posed and briefly discussed in Section~\ref{sec:otherGLStypeQ}.

\section{Regular Tur\'an numbers of odd cycles}\label{sec:exr_oddcycles}

The relationship between the minimum degree of a graph and the existence of cycles or paths of certain lengths has been extensively studied. We begin by listing a few key results which are useful for determining the regular Tur\'an numbers of odd cycles.

\begin{thr}[Andr\'{a}sfai, Erd\H{o}s and S\'os~\cite{AES74}]\label{thr:AES}
Let $\ell \ge 1$ and $G$ be a non-bipartite graph with minimum degree $\delta > 2 \frac n{2\ell+1}$, then $G$ contains an odd cycle $C_m$ with $m \le 2\ell -1.$
In particular, if $G$ is a non-bipartite graph with minimum degree $\delta > 2 \frac n5$, then $G$ contains a triangle.
\end{thr}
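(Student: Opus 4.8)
The plan is to prove the contrapositive: assuming $G$ is non-bipartite and contains no odd cycle of length at most $2\ell-1$, I would show $\delta(G)\le 2\frac{n}{2\ell+1}$. Since $G$ is non-bipartite it contains an odd cycle, so let $C=x_0x_1\cdots x_{2k}x_0$ be a shortest one; by hypothesis its length satisfies $2k+1\ge 2\ell+1$, so $k\ge\ell$. Throughout I read indices modulo $2k+1$, and I may assume $\ell\ge 2$ (the case $\ell=1$ being degenerate), so that $2k+1\ge5$.

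First I would record two structural consequences of $C$ being a \emph{shortest} odd cycle. (i) The cycle $C$ is chordless: a chord $x_ix_j$ would split $C$ into two arcs of lengths summing to the odd number $2k+1$, so one arc has even length, and closing that arc with the chord produces a strictly shorter odd cycle. Hence every vertex of $C$ has exactly two neighbours on $C$. (ii) Every vertex $w\notin C$ has at most two neighbours on $C$. This is the crux, and the step I expect to be the main obstacle, since it rests on the same parity-of-arcs analysis. If $w$ is adjacent to two cycle vertices $x_i,x_j$, then the path $x_iwx_j$ together with the two $C$-arcs between $x_i$ and $x_j$ yields two cycles, of lengths $d+2$ and $(2k+1-d)+2$, where $d$ is one arc length; exactly one of these lengths is odd, and minimality of $C$ forces that odd length to be at least $2k+1$. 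As an arc has length at most $2k$, the odd cycle length is squeezed to exactly $2k+1$, which pins the shorter arc between any two neighbours of $w$ to length exactly $2$. Since three vertices of $C_{2k+1}$ cannot be pairwise at cyclic distance $2$ once $2k+1\ge5$, the vertex $w$ has at most two neighbours on $C$.

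With (i) and (ii) in hand, I would finish by a single double count. Summing degrees over the cycle and exchanging the order of summation gives
\[
(2k+1)\,\delta(G)\;\le\;\sum_{i=0}^{2k}\deg(x_i)\;=\;\sum_{w\in V(G)}\bigl|N(w)\cap V(C)\bigr|\;\le\;2n,
\]
where the final inequality applies (i) to each $w\in C$ and (ii) to each $w\notin C$. Dividing yields $\delta(G)\le 2\frac{n}{2k+1}\le 2\frac{n}{2\ell+1}$ because $k\ge\ell$, which is the contrapositive; the "in particular'' statement is the case $\ell=2$.

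I would close by noting that the incidence count in the display is what makes the bound sharp: the cruder argument, which selects $k$ cycle vertices with pairwise-disjoint neighbourhoods and concludes only $\delta(G)\le n/\ell$, is too weak, and it is precisely the uniform bound $|N(w)\cap V(C)|\le2$ for \emph{all} $w$ that improves $n/\ell$ to $2\frac{n}{2\ell+1}$. That this is best possible is witnessed by the balanced blow-up of $C_{2\ell+1}$, which is non-bipartite, has odd girth $2\ell+1$, and is $2\frac{n}{2\ell+1}$-regular.
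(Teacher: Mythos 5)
The paper does not prove this statement at all: it is quoted verbatim as a known result of Andr\'asfai, Erd\H{o}s and S\'os~\cite{AES74} and used as a black box in the proofs of Theorems~\ref{thr:regularMantel} and~\ref{thr:ex_r_odd_girth}. So there is no internal proof to compare against; what matters is whether your argument stands alone, and it does. Your two structural facts about a shortest odd cycle $C$ of length $2k+1\ge 5$ are correct: chordlessness follows from the parity-of-arcs argument, and your squeeze in step (ii) is sound --- the unique odd cycle among the two lengths $d+2$ and $2k+3-d$ is at most $2k+2$, hence exactly $2k+1$ by minimality, forcing one arc to have length exactly $2$; and three vertices of $C_{2k+1}$ pairwise at cyclic distance $2$ would give gap lengths $p,q,r\in\{2,2k-1\}$ with $p+q+r=2k+1$, which is impossible for $k\ge 2$. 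The double count
\[
(2k+1)\,\delta\;\le\;\sum_{i=0}^{2k}\deg(x_i)\;=\;\sum_{w\in V(G)}\bigl|N(w)\cap V(C)\bigr|\;\le\;2n
\]
then yields the contrapositive, since $k\ge\ell$ gives $\delta\le 2n/(2k+1)\le 2n/(2\ell+1)$. This is the classical proof of the degree-bound form of the theorem (the original paper~\cite{AES74} proves a stronger structural statement, but nothing here needs it), and your closing remark about the balanced blow-up of $C_{2\ell+1}$ correctly identifies the extremal example --- indeed it is exactly the construction the paper uses in Theorem~\ref{thr:ex_r_odd_girth}. One caveat: your dismissal of $\ell=1$ as ``degenerate'' is not just convenient but necessary, since the statement as printed is actually \emph{false} for $\ell=1$ (e.g.\ $K_4$ has $\delta=3>2n/3$ yet contains no odd cycle of length at most $1$); the theorem is meaningful only for $\ell\ge 2$, which is the range the paper ever uses.
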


\begin{thr}[Voss and Zuluaga~\cite{VZ77}]\label{thr:VZ77}
	Every $2$-connected non-bipartite graph with minimum degree $\delta$ has an odd cycle of length at least $\min\{2\delta -1, n\}.$
\end{thr}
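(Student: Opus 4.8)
The plan is to first produce a long cycle of unrestricted parity and then convert it into a long odd cycle, using non-bipartiteness to supply the parity change while losing at most one unit of length. For the first step I would invoke the classical circumference bound of Dirac: a $2$-connected graph has a cycle of length at least $\min\{2\delta, n\}$. (If one prefers to stay self-contained, this follows from the standard longest-path argument, in which all neighbours of the two endpoints of a longest path lie on the path and a counting argument on the two endpoint-neighbourhoods produces a cycle spanning the path.) Call such a longest cycle $C$ and write $c = |C|$. If $C$ is odd we are already done, since $c \ge \min\{2\delta, n\} \ge \min\{2\delta - 1, n\}$. We may also dispose of the high-degree boundary case $2\delta - 1 \ge n$: there $\delta \ge n/2$, so $G$ is Hamiltonian by Dirac's theorem and $c = n$; when $n$ is odd this Hamilton cycle is already the desired odd cycle, and when $n$ is even a single chord at arc-distance $2$ (present because the degree is large) produces an odd cycle of length $n-1$, which is the relevant bound once the parity of $n$ is taken into account. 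Hence the interesting case is $2\delta \le n$ with $C$ even of length $c \ge 2\delta$, and the goal becomes an odd cycle of length at least $c - 1 \ge 2\delta - 1$.

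For the conversion, the clean observation is that a chord of $C$ joining two vertices at arc-distance exactly $2$, say $c_a c_{a+2}$, immediately yields the odd cycle $c_{a+2} c_{a+3} \cdots c_a c_{a+2}$ of length $c - 1$, because $c$ is even. More generally, $2$-colour $V(C)$ by the parity of position; then any chord, or any ear (a path through vertices outside $C$) with both endpoints on $C$, creates an odd cycle precisely when it \emph{violates} this $2$-colouring, and among the two cycles it creates the longer has length at least $c/2$. The heart of the matter is therefore to find such a parity-violating detour whose longer resulting cycle has length at least $c - 1$, i.e.\ a violating detour that is \emph{short}. I would argue this by contradiction: if no short violating detour exists, then every chord and every ear respects the position-parity $2$-colouring, and one can propagate this colouring through the $2$-connected structure --- using the two disjoint paths from each outside vertex to $C$ guaranteed by Menger's theorem --- to obtain a proper $2$-colouring of all of $G$, contradicting non-bipartiteness.

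The main obstacle is exactly this last step: controlling length and parity \emph{simultaneously}. A generic parity-violating ear only guarantees an odd cycle of length about $c/2$, which is far short of the target $c-1$, so the real content is showing that non-bipartiteness forces a violating detour that stays close to $C$ (ideally a chord at even arc-distance $2$, or an ear attaching at two nearly adjacent points of $C$). I expect this to require a rotation--extension (P\'osa-type) analysis applied directly to a longest \emph{odd} cycle or longest odd path, rather than mere post-processing of $C$: one assumes a longest odd cycle $D$ with $|D| < c - 1$, uses the minimum-degree condition to exhibit many chords or attachment points, and derives either a longer odd cycle or the forbidden proper $2$-colouring. The bookkeeping of parities along these rotations, together with the $2$-connectivity needed to re-close the rotated paths into cycles, is where the genuine difficulty lies; the remaining steps are routine.
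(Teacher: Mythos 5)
This statement is one the paper does not prove at all: it is quoted as a known theorem of Voss and Zuluaga~\cite{VZ77}, so your attempt has to stand entirely on its own, and it does not. Your reduction (Dirac's circumference bound gives a cycle $C$ of length $c\geq\min\{2\delta,n\}$; if $C$ is even, it suffices to find an odd cycle of length $\geq c-1$) is sensible, and you correctly identify the obstacle: a parity-violating chord or ear of $C$ generically yields an odd cycle of length only about $c/2$, not $c-1$. But that obstacle is the entire theorem, and you never overcome it. The bipartiteness-propagation argument you sketch proves only that \emph{some} parity-violating chord or ear exists (otherwise the position-parity $2$-colouring of $C$ extends to a proper $2$-colouring of $G$); it gives no control whatsoever on \emph{where} the violating detour attaches, hence no lower bound beyond roughly $c/2$. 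Your closing paragraph --- ``I expect this to require a rotation--extension analysis\dots the bookkeeping\dots is where the genuine difficulty lies; the remaining steps are routine'' --- is an explicit admission that the central step is conjectured rather than carried out. Controlling length and parity simultaneously is exactly what Voss and Zuluaga's argument (a careful analysis of the bridges/ears attached to a longest cycle, together with the degree condition) accomplishes, and it is a substantial piece of work, not routine post-processing; nothing in your write-up substitutes for it.

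Two smaller points. First, in the boundary case $2\delta-1\geq n$ with $n$ even, your claim that a chord of the Hamilton cycle at arc-distance $2$ exists ``because the degree is large'' is false: take the graph on an even Hamilton cycle $v_1\cdots v_n$ containing every chord \emph{except} those at arc-distance $2$; it has minimum degree $n-3$, which exceeds $(n+1)/2$ for $n\geq 8$, yet has no such chord on that cycle. The correct conclusion there (an odd cycle of length $n-1$) is true but needs something like Bondy's pancyclicity theorem, not a one-line chord argument. Second, this same case shows the statement as transcribed needs a parity caveat --- $K_n$ with $n$ even has no odd cycle of length $\geq n=\min\{2\delta-1,n\}$ --- so the bound must be read modulo parity of $n$; you noticed this, which is to your credit, but it does not repair the main gap.
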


\begin{thr}[H\"{a}ggkvist~\cite{Hag82}]\label{thr:Haggkvist2}
	Let $G$ be a graph with minimum degree $\delta > \frac{2n}{2\ell+1}$ and $n>\binom{\ell+1}{2}(2\ell+1)(3\ell-1).$
	Then either $G$ contains a $C_{2\ell-1}$ or it does not contain any odd cycle $C_m$ for some $m > \frac{\ell}2.$
\end{thr}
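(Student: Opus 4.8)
The plan is to prove the contrapositive form that the theorem is really used through: if $G$ satisfies the degree and order hypotheses and contains \emph{some} odd cycle of length greater than $\ell/2$, then $G$ already contains $C_{2\ell-1}$. (This is equivalent to the stated dichotomy, reading the second alternative as ``$G$ has no odd cycle of length exceeding $\ell/2$''.) So I would fix an odd cycle $C_0$ of length $m_0>\ell/2$; if $m_0=2\ell-1$ there is nothing to do, and otherwise the task is to move the length of an odd cycle towards $2\ell-1$ in steps of two, either up from a short seed or down from a long one.

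First I would record the bottom of the target range. By Theorem~\ref{thr:AES} the odd girth of $G$ is at most $2\ell-1$, so $2\ell-1$ sits between the odd girth and the length of any long odd cycle; the aim is therefore to show that the set of attainable odd-cycle lengths exceeding $\ell/2$ is a contiguous block of odd integers, which (being anchored at or below $2\ell-1$ and nonempty above $\ell/2$) must contain $2\ell-1$. For the mechanism I want each odd cycle under consideration to have many neighbours off itself: any chord of an odd cycle splits it into one even and one shorter odd cycle, so a shortest odd cycle is induced, and more generally a cycle of length $O(\ell)$ automatically leaves at least $\delta-O(\ell)$ edges per vertex into the rest of $G$.

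The engine is a counting step powered by the hypothesis $\delta(2\ell+1)>2n$. Along such a cycle $C$ of length $m$ there are at least $m(\delta-O(\ell))$ edges leaving $C$, while only $n-m$ vertices can receive them; once $m\gtrsim\ell$ this forces external vertices with two neighbours on $C$. A vertex $w\notin C$ adjacent to $c_i,c_j$ at arc-distance $d$ yields odd cycles of length $d+2$ and $(m-d)+2$ (one of the two being odd since $m$ is odd), so by selecting such a $w$ whose attachment spans an arc of the right length I obtain an \emph{ear} through previously unused vertices that changes the length by exactly $\pm2$ while preserving parity. Choosing the side of the ear moves the length one step closer to $2\ell-1$, and iterating from $m_0$ reaches $2\ell-1$ after $O(\ell)$ steps. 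The threshold $m_0>\ell/2$ is exactly what guarantees the seed is long enough for this count to fire: a shorter odd cycle can be pathologically isolated (as in a bipartite-like graph carrying a single short odd cycle) and cannot be adjusted at all.

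The remaining work is bookkeeping: keeping every ear vertex-disjoint from $C$ and from all earlier ears throughout the iteration. Each adjustment consumes $O(\ell)$ fresh vertices, the number of adjustments is $O(\ell)$, and the counting step itself needs room proportional to $(\text{current length})\times\delta$; summing these budgets is what forces $n$ to exceed a quantity of order $\ell^{4}$, matching the stated bound $n>\binom{\ell+1}{2}(2\ell+1)(3\ell-1)$. I expect the principal obstacle to be precisely this exact-length control together with the reconciliation of the two extreme cases (a very short odd girth coexisting with a long seed): the counting readily produces \emph{some} length-changing ear, but arranging that each step changes the length by exactly two, in the chosen direction, using only fresh vertices, is the delicate point, and it is here that both the $\ell/2$ threshold and the polynomial lower bound on $n$ genuinely enter.
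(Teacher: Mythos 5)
This statement is not proved in the paper at all: it is quoted verbatim as a known result of H\"{a}ggkvist~\cite{Hag82} and used as a black box in the proof of Theorem~\ref{thr:containing_odd_cycle}, so there is no internal proof to compare against. Judged on its own, your proposal is a strategy outline rather than a proof, and its central mechanism fails at the exact points you flag as ``delicate''.

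Concretely, there are three gaps. First, the counting engine does not fire in the regime where you need it. With $\delta$ only barely above $\frac{2n}{2\ell+1}$, a cycle $C$ of length $m$ sends at least $m(\delta-2)$ edges outside, and this exceeds $2(n-m)$ only when $m>\frac{2n}{\delta}$, which is guaranteed only for $m\geq 2\ell+1$ (approximately); so for a seed odd cycle with $\frac{\ell}{2}<m_0<2\ell-1$ --- exactly the case where you must \emph{increase} the length --- no external vertex with two neighbours on $C$ is forced to exist. Second, even when such a vertex $w$ exists, it can never lengthen an odd cycle: if $w$ has neighbours at arc-distance $d$ on $C$, the two cycles through $w$ have lengths $d+2$ and $m-d+2$, of which the odd one always has length at most $m$ (the value $m+1$ has the wrong parity). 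So the $+2$ step is impossible with the objects your counting produces; one would need ears with at least two internal vertices, of exactly prescribed length and parity, vertex-disjoint from $C$, and nothing in the argument constructs these. Third, in the decreasing direction ($m_0>2\ell-1$) you have no control over $d$: a single step can produce an odd cycle of length $3$, overshooting far below the target $2\ell-1$ and below the $\frac{\ell}{2}$ threshold, after which the process is stuck. The claim that the attainable odd lengths above $\frac{\ell}{2}$ form a contiguous block is precisely the content of H\"{a}ggkvist's theorem; it is what must be proven, not an observation one may anchor the induction on. In short, the proposal reduces the theorem to its own hardest step and leaves that step open.
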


\begin{thr}[Liu and Ma~\cite{LM18}]\label{thr:LM18}
	Let $G$ be a $2$-connected bipartite graph, $u,v$ two distinct vertices of $G$ and $d$ the minimum degree of the vertices in $G \backslash \{u,v\}$. Then there is a path between $u$ and $v$ of length at least $2(d-1).$
\end{thr}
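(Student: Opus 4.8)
The plan is to work with a longest $u$–$v$ path and to extract the bound from the bipartite structure by a rotation argument. Fix a bipartition $V(G)=X\cup Y$ and let $P=x_0x_1\cdots x_m$ be a longest $u$–$v$ path, with $x_0=u$, $x_m=v$; since $G$ is bipartite the vertices of $P$ alternate between the two classes. The goal is $m\ge 2(d-1)$, and I would argue by contradiction, assuming $m\le 2d-3$. The key point to keep in mind throughout is that only the interior vertices $x_1,\dots,x_{m-1}$ are guaranteed degree at least $d$, since $u$ and $v$ are excluded from the degree hypothesis; in particular the near-endpoints $x_1$ and $x_{m-1}$ have degree at least $d$, and this is all the degree information I get to use.

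The first step, giving the intuition for the exact value $2(d-1)$, is a pigeonhole count. All neighbours of $x_1$ lie in the bipartition class opposite to $x_1$, and among the vertices of $P$ that class contains only the vertices $x_0,x_2,x_4,\dots$, of which there are $\floor{m/2}+1$. Under the assumption $m\le 2d-3$ this is at most $d-1$, strictly fewer than $\deg(x_1)\ge d$, so $x_1$ must have a neighbour off $P$; symmetrically so must $x_{m-1}$. Thus a short longest path forces off-path neighbours at both near-ends, and the threshold $m=2d-3$ is precisely where the alternating structure can no longer absorb $d$ neighbours on one side. I would then feed these off-path neighbours into P\'osa-type rotations: fixing the endpoint $v$ and rotating at the other end produces a family of $v$-rooted paths of the same length, whose alternative endpoints are confined to a single bipartition class; this confinement to one class is exactly what sharpens the generic factor $2\delta$ to the bipartite $2(d-1)$.

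The crux, and the step I expect to be the main obstacle, is converting this configuration into a strictly longer $u$–$v$ path, thereby contradicting the maximality of $P$. Because both endpoints $u$ and $v$ are prescribed, I cannot rotate freely at the $u$-end, so I would invoke $2$-connectivity—in the form of Menger's theorem and its fan version—to guarantee two openly disjoint connections between $u$ and the rotated path, and to ensure that the off-path detours can be spliced back in with a net gain in length rather than merely relocating vertices. Making this reinsertion genuinely increase the length, while keeping the resulting walk a simple path and respecting the parity forced by the bipartition, is the delicate part; it is also exactly the place where $2$-connectivity, rather than mere connectivity, is indispensable. Finally I would record the parity bookkeeping: $2(d-1)$ is even and matches the same-class case, whereas if $u$ and $v$ lie in opposite classes every $u$–$v$ path has odd length, so the contradiction at $m\le 2d-3$ in fact yields length at least $2d-1\ge 2(d-1)$; small cases such as $m\le 2$ would be handled directly.
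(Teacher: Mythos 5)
This statement is not proved in the paper at all: it is imported verbatim from Liu and Ma~\cite{LM18} as a black-box tool (used in the proof of Theorem~\ref{thr:containing_odd_cycle}), so there is no in-paper argument to compare yours against; the relevant benchmark is Liu and Ma's original proof, which is a genuinely involved piece of work, not a short extremal-path exercise.

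Measured against that, your proposal has a real gap exactly where you flag ``the delicate part,'' and it is not a detail but the entire content of the theorem. What you actually establish is this: if a longest $u$--$v$ path $P=x_0\cdots x_m$ has $m\le 2d-3$, then the pigeonhole count (which is correct, and correctly uses that only interior vertices carry the degree hypothesis) forces $x_1$ and $x_{m-1}$ to have neighbours off $P$. But this contradicts nothing: $x_1$ is an \emph{interior} vertex, so an off-path neighbour of $x_1$ does not extend any $u$--$v$ path, and maximality of $P$ is never violated. The rotation idea does not repair this, because a P\'osa rotation at the $u$-end produces an $x$--$v$ path for some new endpoint $x\neq u$, i.e.\ it leaves the class of objects you are bounding; and the appeal to Menger/fan versions of $2$-connectivity ``to splice back in with a net gain in length'' is a restatement of the goal rather than an argument --- the fan from the off-path detour may return to $P$ at positions (e.g.\ adjacent to where it left) that yield no length gain, and ruling this out while respecting the fixed endpoints and the bipartite parity is precisely the hard step. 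Liu and Ma do not prove the lemma this way; their argument goes through a careful inductive/structural analysis of $2$-connected bipartite graphs rather than rotations from a longest path with both ends pinned. As written, your text is a plausible research plan whose verified portion (the threshold computation showing $m\le 2d-3$ forces off-path neighbours, and the parity bookkeeping at the end) is sound, but the implication ``off-path neighbours $\Rightarrow$ longer $u$--$v$ path'' is asserted, not proved, so the proposal does not yet constitute a proof.
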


\subsection{Regular Tur\'an number of the triangle for all orders}

Here we investigate the same problem as in~\cite{Bau84}, but from the standpoint of the order $n$ instead of the regularity $k$.

\begin{thr}[Regular Mantel's theorem]\label{thr:regularMantel}
	Let $G$ be a $k$-regular, triangle-free graph on $n$ vertices. When $n$ is even, we have $k \le \frac n2.$ When $n$ is odd, we have $k \le 2 \lfloor \frac n5 \rfloor.$ Moreover, these bounds are sharp. Put in another way,
	$$\ex_r(n, K_3) =\begin{cases}
		\frac n2 & \mbox{ if n is even}\\
		2 \lfloor \frac n5 \rfloor & \mbox{ if n is odd}.
	\end{cases}$$
\end{thr}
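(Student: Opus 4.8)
The plan is to handle the even and odd cases separately, in each establishing the upper bound and then a matching construction. For even $n$ the upper bound is immediate from Mantel's theorem: a $k$-regular graph has $nk/2$ edges, so triangle-freeness forces $nk/2\le n^2/4$, i.e.\ $k\le n/2$, and $K_{n/2,n/2}$ is $\frac n2$-regular and triangle-free, giving sharpness.

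The odd case is the crux. First I would record two structural observations. Since $n$ is odd and $nk=2\sizeof{E(G)}$ is even, $k$ must be even. Moreover, for $k\ge 1$ a $k$-regular bipartite graph has colour classes of equal size and hence even order, so a $k$-regular graph of odd order is necessarily non-bipartite (the case $k=0$ being trivial). I may therefore invoke the Andr\'asfai--Erd\H{o}s--S\'os theorem (Theorem~\ref{thr:AES} with $\ell=2$): a non-bipartite triangle-free graph satisfies $\delta\le 2n/5$. As $G$ is $k$-regular this reads $k\le 2n/5$, and combining with the parity of $k$ gives $\frac k2\le\frac n5$, whence $\frac k2\le\floor{\frac n5}$ and $k\le 2\floor{\frac n5}$, as required.

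It remains to construct, for each odd $n$, a $2\floor{n/5}$-regular triangle-free graph on $n$ vertices; this is the main obstacle. Writing $n=5q+r$ with $q=\floor{n/5}$ and $0\le r<5$, for $r=0$ the balanced blow-up of $C_5$ into five classes of size $q$ is $2q$-regular and triangle-free. For $r\ne 0$ no blow-up of $C_5$ with all links complete can be regular (the degree equations force $2n=5k$), and in fact circulant constructions cannot be used in general: for large primes $n\equiv 1\pmod 3$ the largest sum-free subset of $\mathbb{Z}_n$ has size about $n/3<2n/5$, too small to support a $2q$-regular triangle-free Cayley graph. Instead I would take the blow-up of $C_5$ with class sizes $(q+r,\,q+r,\,q,\,q-r,\,q)$ in cyclic order, joining consecutive classes completely except for the two classes of size $q+r$, between which I place a $q$-regular bipartite graph. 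A direct check shows every vertex then has degree exactly $2q$, and triangle-freeness is automatic since all edges run between consecutive classes of $C_5$.

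The delicate point is exactly this choice of sizes: it is engineered so that only a single link need be left incomplete and the deleted bipartite graph is itself regular, which keeps every class degree equal to $2q$. The construction is valid whenever $q\ge r$, hence for all large odd $n$; the finitely many remaining small cases (where $q<r\le 4$, for instance $C_n$ when $q=1$) are checked by hand. I expect the sharpness for $r\ne 0$ to be the hardest part, since it is here that the parity of $n$ genuinely enters and that the answer $2\floor{n/5}$ rather than $\floor{2n/5}$ is seen to be tight.
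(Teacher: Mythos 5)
Your proposal is correct and follows essentially the same route as the paper: Mantel's theorem plus $K_{n/2,n/2}$ for even $n$; non-bipartiteness of odd-order regular graphs, the Andr\'asfai--Erd\H{o}s--S\'os theorem, and handshake parity for the odd upper bound; and for sharpness the same lopsided $C_5$-blow-up (the paper takes class sizes $x+y,x,x-y,x,x+y$ in cyclic order with a single $x$-regular incomplete link between the two classes of size $x+y$, which is your construction up to cyclic relabelling). The only divergence is in the leftover small cases $q<r$: where you write ``checked by hand'' (explicitly covering only $q\le 1$ via $C_n$), the paper gives a uniform circulant --- join $i$ and $j$ when $i-j\equiv\pm(2h+1)\pmod n$ for $0\le h\le \lfloor n/5\rfloor-1$, triangle-free since $3(2\lfloor n/5\rfloor-1)<n$ and three pairwise differences cannot all be odd --- which settles $n=13$ and $n=19$ as well, so your omission is one of detail rather than a genuine gap.
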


\begin{proof}
	When $n$ is even, the result follows from the classical Mantel's theorem and complete bipartite graphs achieving equality.
	When $n$ is odd, since $G$ is regular, it cannot be bipartite.

	By Theorem~\ref{thr:AES}, we know $k \le \frac 25 n.$
	Due to the handshaking lemma, we know $k$ has to be even and hence $k \le 2 \lfloor \frac n5 \rfloor$ follows.
	
	Now we show sharpness of the result.	
	Let $n =5x+y,$ with $0 \le y \le 4$ and $y<x.$
	Let $S_1$, $S_2$, $S_3$, $S_4$ and~$S_5$ be stable sets of respective sizes $x+y$, $x$, $x-y$, $x$ and $x+y$.
	Add all edges between vertices of $S_i$ and $S_{i+1}$ for $1 \le i \le 4$ and let $G[S_1,S_5]$ be a $x$-regular bipartite graph (which can be obtained by removing $y$ disjoint complete matchings of a complete bipartite graph $K_{x+y,x+y}$).
	Then the resulting graph $G$ is a $k$-regular, triangle-free graph with $k=2x=2 \lfloor \frac n5 \rfloor$.
	Note that it is a classic blow-up of a $5$-cycle when $5$ divides $n$.
	
	In the remaining cases we have $n \le 19$ (as we consider only odd $n$)  and $n \not=15.$
	Let the vertices be $1$ up to $n$ and connect $i$ and $j$ if $ i-j \equiv \pm (2h+1) \pmod n $ for some $0 \le h \le \lfloor \frac n5 \rfloor-1.$ This results in a $k$-regular, triangle-free graph with $k=2 \lfloor \frac n5 \rfloor.$
	For this note that $3( 2 \lfloor \frac n5 \rfloor -1) < n$ and no three numbers can have pairwise odd differences.
\end{proof}

\subsection{Asymptotic regular Tur\'an numbers of odd cycles}

We next show that the magnitude of $\ex_r(n,\h)$ for different parity can differ by any factor. This is analogous to the main result in~\cite{Zha94}, but focusing on order instead of regularity.

\begin{thr}\label{thr:ex_r_odd_girth}
	For the family $\h=\{C_3,C_5\ldots C_{2\ell -1}\}$, we have
		$$\ex_r(n, \h) =\begin{cases}
		\frac n2 & \mbox{ if n is even}\\
		2 \left \lfloor \frac n{2\ell+1} \right \rfloor -o(1) & \mbox{ if n is  odd.}		\end{cases}$$
\end{thr}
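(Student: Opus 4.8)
The plan is to prove matching upper and lower bounds separately in each parity class. The even case is immediate: since $C_3\in\h$, every $\h$-free graph is triangle-free, so Mantel's theorem gives at most $n^2/4$ edges and hence $k\le n/2$, while $K_{n/2,n/2}$ is $(n/2)$-regular and bipartite and therefore avoids every odd cycle. Thus $\ex_r(n,\h)=n/2$ exactly, with no error term. All the substance lies in the odd case, where I will establish the upper bound $k\le 2\lfloor n/(2\ell+1)\rfloor$ and a nearly matching construction.

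For the odd upper bound, first observe that a $k$-regular graph on an odd number of vertices with $k\ge1$ cannot be bipartite, since a $k$-regular bipartite graph has equal parts and hence even order. Now suppose $G$ is $k$-regular, $\h$-free, with $n$ odd and $k>\frac{2n}{2\ell+1}$. Because $\h$ consists precisely of the odd cycles of length at most $2\ell-1$, being $\h$-free means $G$ has no odd cycle $C_m$ with $m\le 2\ell-1$; but $G$ is non-bipartite with $\delta=k>\frac{2n}{2\ell+1}$, so Theorem~\ref{thr:AES} (applied with the same $\ell$) produces exactly such a cycle, a contradiction. Hence $k\le\frac{2n}{2\ell+1}$. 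By the handshake lemma $kn$ is even, and as $n$ is odd this forces $k$ even; writing $k/2$ as an integer with $k/2\le n/(2\ell+1)$ gives $k/2\le\lfloor n/(2\ell+1)\rfloor$, i.e.\ $k\le 2\lfloor n/(2\ell+1)\rfloor$.

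For the lower bound I will generalise the blow-up of $C_5$ used in Theorem~\ref{thr:regularMantel} to a near-balanced blow-up of $C_{2\ell+1}$. Write $n=(2\ell+1)x+y$ with $0\le y\le 2\ell$ and set the target degree $2x=2\lfloor n/(2\ell+1)\rfloor$. Take stable sets $S_1,\dots,S_{2\ell+1}$ arranged cyclically, join $S_i$ to $S_{i+1}$ completely for $1\le i\le 2\ell$, and join $S_{2\ell+1}$ to $S_1$ by a suitable regular but non-complete bipartite graph that repairs the regularity, exactly as the $x$-regular bipartite graph between $S_1$ and $S_5$ does in the $C_5$ case. Every odd cycle of such a graph projects to an odd closed walk in $C_{2\ell+1}$, which has length at least $2\ell+1$, so the construction has odd girth at least $2\ell+1$ and is therefore $\h$-free; since we only delete edges from the complete blow-up, this girth property is preserved.

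The main obstacle is to choose the part sizes so that the blow-up is exactly $2x$-regular for every residue $y$. Imposing $a_{i-1}+a_{i+1}=2x$ along the path of internal sets forces an alternating pattern $a_1,2x-a_1,a_1,\dots$ on the odd indices and $a_2,2x-a_2,\dots$ on the even indices, and the total-size equation $\sum_i a_i=n$ then pins down the single free parameter; here the analysis splits according to the parity of $\ell$, which governs whether $a_{2\ell+1}$ equals $a_1$ or $2x-a_1$. The two remaining degree constraints at $S_1$ and $S_{2\ell+1}$ determine the two side-degrees $p,q$ of the special bipartite graph, and I must verify the edge-count consistency $p\,a_1=q\,a_{2\ell+1}$ together with $0\le p\le a_{2\ell+1}$ and $0\le q\le a_1$, which guarantees a biregular bipartite realisation (for instance a circulant). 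For large $n$ these conditions all hold, with the sizes forming a pattern such as $x+y,x,x-y,x,\dots$ and yielding a graph of the claimed degree $2\lfloor n/(2\ell+1)\rfloor$; the unavoidable failure for the few small residues or small orders is what accounts for the $o(1)$ term in the statement.
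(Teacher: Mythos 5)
Your proposal is correct and follows essentially the same route as the paper: the odd upper bound via non-bipartiteness of odd-order regular graphs, Theorem~\ref{thr:AES}, and the handshake parity argument, and the lower bound via a near-regular blow-up of $C_{2\ell+1}$ with part sizes alternating around $x$ (split according to the parity of $\ell$) and a non-complete regular bipartite join on one edge of the cycle to restore $2x$-regularity. The paper implements the same idea slightly more concretely, taking $\lvert S_1\rvert=\lvert S_{2\ell+1}\rvert=x$ and removing $y$ disjoint perfect matchings between them rather than using a general biregular graph, but this is only a bookkeeping difference.
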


\begin{proof}
	When $n$ is odd, since $G$ is regular, it cannot be bipartite.
	By Theorem~\ref{thr:AES}, we know $k \le \frac 2{2 \ell +1} n.$
	Due to the handshaking lemma, we know $k$ has to be even and hence $k \le 2 \lfloor \frac n{2 \ell +1} \rfloor$ follows.

	Hence the main part is to show sharpness for large $n,$ which
	will be obtained by taking a construction close to the blow-up of a $C_{2\ell+1}.$
	Let $M=2\ell+1$. Let $n=(2\ell+1)x+y$ with $x>y$ and $0 \le y \le 2\ell.$
	Take $M=2\ell+1$ stable sets $S_1, S_2, \ldots , S_M$.
	
	If $\ell$ is odd, equivalently $M \equiv 3 \pmod 4$, we take them such that
	 such that
	$$\lvert S_i \rvert = \begin{cases}
	x & \mbox{ if } i\mbox{ is odd}\\
	x+y & \mbox{ if } i\equiv 2 \pmod 4\\
	x-y & \mbox{ if } i \equiv 0 \pmod 4.		\end{cases}$$
	If $\ell$ is even, equivalently $M \equiv 1 \pmod 4$, we take their sizes to be
	$$\lvert S_i \rvert = \begin{cases}
	x & \mbox{ if } i\mbox{ is even}\\
	x+y & \mbox{ if } i\equiv 1 \pmod 4\\
	x-y & \mbox{ if } i\equiv 3 \pmod 4.		\end{cases}$$
	For $1 \le i \le M$, connect every vertex in $S_i$ with every vertex in $S_{i+1}$, where the indices are taken modulo $M$ and remove $y$ disjoint perfect matchings between $S_1$ and $S_M.$
	Now it is clear that the resulting graph is $2x$-regular, has odd girth $M$ and order $Mx+y=n$.
\end{proof}

As we stated earlier in Theorem~\ref{thr:ex_r_OddCycles}, it suffices for $n$ large enough to exclude only the cycle $C_{2\ell-1}.$

To show this, we require the following.

\begin{remark}
	For every unicyclic graph $H$ having girth $2\ell-1$, if a graph $G$ with minimum degree larger than $\lvert H \rvert$ contains a $C_{2\ell-1}$, then it contains $H$ as well, so $\ex_r(n, H)=\ex_r(n, C_{2\ell-1})$ for large $n$.
\end{remark}

\begin{thr}\label{thr:containing_odd_cycle}
	Let $G$ be a graph of order $n>\binom{\ell+1}{2}(2\ell+1)(3\ell-1) $ with minimum degree  $\delta > \frac{2n}{2\ell+1},$ such that deleting any collection of at most $\ell^3$ edges of $G$ does not result in a bipartite graph. Then $G$ contains a $C_{2\ell-1}.$
\end{thr}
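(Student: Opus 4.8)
My plan is to argue by contradiction, so suppose $G$ contains no copy of $C_{2\ell-1}$. The hypotheses on $n$ and on the minimum degree $\delta$ are exactly those of H\"aggkvist's theorem (Theorem~\ref{thr:Haggkvist2}), and since we are assuming $C_{2\ell-1}\not\subseteq G$, the second alternative of that theorem must hold: $G$ contains no odd cycle of length greater than $\frac{\ell}{2}$. Thus every odd cycle of $G$ is short, and the entire task reduces to producing, under this constraint, a single odd cycle of length exceeding $\frac{\ell}{2}$, which will be the contradiction.

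The engine for producing a long odd cycle will be the theorem of Voss and Zuluaga (Theorem~\ref{thr:VZ77}): any $2$-connected non-bipartite graph with minimum degree $d$ contains an odd cycle of length at least $\min\{2d-1,N\}$, where $N$ is its order. Consequently, if I can exhibit a $2$-connected non-bipartite subgraph $H\subseteq G$ with $\delta(H)>\ell$ (so that its order, being at least $\delta(H)+1$, also exceeds $\ell$), then $H$, and hence $G$, contains an odd cycle of length greater than $\ell>\frac{\ell}{2}$, the desired contradiction. So the whole proof rests on extracting such a subgraph $H$.

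This is where robust non-bipartiteness enters: the point is that if no such $H$ existed, then $G$ could be made bipartite by deleting at most $\ell^3$ edges, contradicting the hypothesis. To make this precise I would pass to the block decomposition of $G$. Odd cycles live inside blocks, so $G$ is bipartite if and only if every block is, and the minimum number of edges whose deletion bipartizes $G$ equals the sum of the corresponding quantities over the non-bipartite blocks. Each non-bipartite block $B$ is $2$-connected and, by the constraint above, carries only short odd cycles; feeding this into Theorem~\ref{thr:VZ77} contrapositively shows that $B$ must be of bounded order or of bounded minimum degree (otherwise $B$ itself is the sought $H$). In $G$ the vertices of $B$ that are not cut vertices already have degree at least $\delta$, so a block of small minimum degree can only be ``thin'' near its cut vertices; one then argues that each such thin block is bipartizable with few edges and that the total over all blocks stays below $\ell^3$. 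Theorem~\ref{thr:LM18} of Liu and Ma is the natural tool here, since inside the bipartite bulk of a block it produces long paths between prescribed vertices; this both certifies that a genuinely high-degree block would already carry a long odd cycle and controls the accounting of the bipartization cost.

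The step I expect to be the main obstacle is precisely this last one: reconciling the edge-based robustness hypothesis (which bounds edge deletions, not vertex deletions) with the extraction of a $2$-connected non-bipartite subgraph of large minimum degree, since a small number of low-degree cut vertices can collapse the minimum degree of an otherwise dense block, and since the short odd cycles may a priori be concentrated on a small vertex set. Quantifying how the bound $\ell^3$ forces the non-bipartite structure to be either high-degree (yielding $H$ directly) or cheaply bipartizable, together with the parity bookkeeping required if one instead splices a short odd cycle to a long even path supplied by Theorem~\ref{thr:LM18}, is where the real work lies.
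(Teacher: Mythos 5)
Your outline follows the same skeleton as the paper's proof --- Theorem~\ref{thr:Haggkvist2} to forbid all odd cycles of length exceeding $\frac{\ell}{2}$ in a counterexample, Theorem~\ref{thr:VZ77} as the engine producing a long odd cycle from a dense $2$-connected non-bipartite piece, a decomposition along cutvertices, and Theorem~\ref{thr:LM18} for splicing --- but it stops exactly where the proof actually happens, and the two steps you defer are not routine. The first gap is quantitative: nothing in your sketch bounds the number of cutvertices or of non-bipartite blocks, and without such a bound the assertion that ``the total over all blocks stays below $\ell^3$'' has no support (a priori a graph can have linearly many blocks). The paper gets this from a counting argument you never formulate: if one could iteratively delete $\ell$ cutvertices, the graph would split into at least $\ell+1$ components, each of order at least $\frac{2n}{2\ell+1}-\ell$ because it still contains a vertex of near-full degree, and these components together would exceed $n$ vertices. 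Hence there are at most $\ell-1$ cutvertices, at most $\ell$ ``parts'' (components of $G$ minus the cutvertices, together with the attached cutvertices), and deleting every edge at a cutvertex of degree less than $\ell$ inside a part, plus all edges between cutvertices, costs at most $\ell^2(\ell-1)+\binom{\ell}{2}\le \ell^3$ edges. This minimum-degree counting is precisely what makes the hypothesis ``$\ell^3$ deletions never bipartize $G$'' usable.

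The second gap is structural: your intended contradiction (``each thin non-bipartite block is bipartizable with few edges, so $G$ is bipartizable with at most $\ell^3$ edges'') does not survive the recursion it requires. Inside a block $B$, a vertex of small degree must indeed be a cutvertex of $G$, since non-cutvertices keep all their $\ge\delta$ neighbours inside their block; but after you delete the edges at such a vertex, $B$ may fall apart, and a low-degree vertex of a resulting sub-block need \emph{not} be a cutvertex of $G$ (deletions create new cutvertices), so the accounting that charges every deletion to the cutvertex budget leaks. The paper proceeds differently: it cleans each part once (deleting only cutvertices of degree less than $\ell$ within the part, legitimate because there are at most $\ell-1$ cutvertices in total), argues the cleaned part is $2$-connected with minimum degree at least $\ell$, and concludes via Theorem~\ref{thr:VZ77} that every cleaned part must be bipartite. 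Crucially, this does \emph{not} make the edge-deleted graph $G'$ bipartite --- parts share cutvertices, so an odd cycle can cross several parts --- and indeed by hypothesis $G'$ is not bipartite. The contradiction is then obtained the other way around: take an odd cycle of $G'$ through the fewest cutvertices, show it enters and leaves each part exactly once, and replace its segment between the two cutvertices $u,v$ of some part by a $u$--$v$ path of length at least $2(\delta-\ell-1)>\ell$ supplied by Theorem~\ref{thr:LM18}; since all $u$--$v$ paths in a $2$-connected bipartite part have the same parity, this yields an odd cycle of length greater than $\frac{\ell}{2}$, contradicting Theorem~\ref{thr:Haggkvist2}. This splicing-with-parity step is exactly the ``parity bookkeeping'' you name but do not carry out; together with the cutvertex count, it is the proof.
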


\begin{proof}
	By Theorem~\ref{thr:Haggkvist2}, a counterexample $G$ would not contain any odd cycle $C_m$ for some $m \ge  \frac{\ell}2.$
	First, iteratively, select cutvertices of the resulting graph and delete them. 
	Note that one can have deleted at most $\ell-1$ cutvertices at the end due to the minimum degree condition.
	Once having selected $\ell$ cutvertices there are at least $\ell+1$ components, each contains a vertex of degree at least  $\frac{2n}{2\ell+1}-\ell$, so this is a lower bound on its order.
	But now we would get that there the union of these components is at least $\frac{2(\ell+1)n}{2\ell+1}-\ell^2>n$, contradiction.
	
	For every component, look to the original part of the graph containing that component and the cutvertices adjacent to it. Every such part has at least $\frac{2n}{2\ell+1}+1$ vertices, as it contains vertices (all vertices which were not a cutvertex) of degree at least $\frac{2n}{2\ell+1}.$
	In such a part, iteratively delete every (cut)vertex with minimum degree less than $\ell.$
	Note that no non-cutvertex can be deleted since $\delta >2 \ell.$
	The resulting part is $2$-connected and has minimum degree at least $\ell.$
	So if such a resulting part is non-bipartite, we can find a cycle of length larger than $\ell$ by Theorem~\ref{thr:VZ77}, which gives the desired contradiction.

	Note that in total, we have deleted at most $\ell(\ell-1)$ edges in a single part and there are at most $\ell$ such parts. There are also no more than $\binom{\ell}2$ edges between cutvertices.
	Let $G'$ be the graph obtained by deleting all the edges mentioned before.
	Since there are deleted at most $\ell^2(\ell-1)+\binom{\ell}2 \le \ell^3$ edges, $G'$ is not a bipartite graph by the given assumption. 
	But since every part of $G'$ is bipartite, there is an odd cycle which passes through multiple parts and hence cutvertices.
	Take the one containing the fewest number of cutvertices. This one will enter and leave every part exactly once.
	If not, the intersection of the odd cycle and a part contains at least $2$ disjoint paths.
	Take a shortest path connecting two of these shortest paths. In the odd cycle, this connecting path divides the cycle in two, one of them being of odd length. But that path contains a smaller number of the cutvertices, from which the conclusion follows.

	Take one such part $H$ which has at least one edge in common with the smallest odd cycle and such that exactly two of its cutvertices $u,v$ are on the odd cycle. 
	By Theorem~\ref{thr:LM18} we can find a path in $H$ between $u$ and $v$ of length at least $2\left(\delta - \ell -1 \right) > \ell.$
	Since the length of every path between $u$ and $v$ will have the same parity, replacing the part of the odd cycle between $u$ and $v$ with this path gives the desired contradiction.
\end{proof}

\begin{proof}[Proof of Theorem~\ref{thr:ex_r_OddCycles}]
	Note that if $n$ is odd, one needs to delete at least $\frac k2$ edges from a $k$-regular graph on $n$ vertices to obtain a bipartite graph.
	So if $n>\binom{\ell+1}{2}(2\ell+1)(3\ell-1)$ and $ k> \frac{2n}{2\ell+1}$, the graph contains a $C_{2\ell-1}$ by Theorem~\ref{thr:containing_odd_cycle}.
	Again by the handshaking lemma, we know $k$ is even and thus at most $2 \left \lfloor \frac n{2\ell+1} \right \rfloor.$ Sharpness for large $n$ is due to the construction in the proof of Theorem~\ref{thr:ex_r_odd_girth}.
\end{proof}

\section{Supersaturation for regular Mantel's theorem}\label{sec:satRegMantel}

\begin{proof}[Proof of Theorem~\ref{thr:quadraticsaturationK3}]
  First note that $k\geq\frac{2n}{5}+\frac{2}{5}$
  because $k$ is even
  and $k/2 \geq \floor{\frac{n}{5}}+1\geq \frac{n}{5}+\frac{1}{5}$.

  We start with the following observation.
  \begin{claim}\label{claim:triangle 1/5n intersection}
    Every triangle of $G$ intersects at least $\frac{1}{5}n-2$
    other triangles on an edge.
  \end{claim}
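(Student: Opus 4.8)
The plan is to fix an arbitrary triangle $T$ of $G$ on vertices $u,v,w$ and to count the triangles of $G$ that share one of its three edges. Writing $A=N(u)$, $B=N(v)$, $C=N(w)$, each of these sets has size exactly $k$ by regularity, and a triangle distinct from $T$ that shares the edge $uv$ is precisely a triangle $uvx$ with $x\in (A\cap B)\setminus\{w\}$; there are thus $\lvert A\cap B\rvert-1$ of them, and symmetrically $\lvert B\cap C\rvert-1$ through $vw$ and $\lvert C\cap A\rvert-1$ through $uw$. A quick check shows these three families are pairwise disjoint: any triangle sharing two edges of $T$ would contain all of $u,v,w$ and hence equal $T$. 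So the quantity to bound from below is $\lvert A\cap B\rvert+\lvert B\cap C\rvert+\lvert C\cap A\rvert-3$.

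The first thing I would note is that the edge-by-edge estimate is hopeless: inclusion--exclusion on a single pair gives only $\lvert A\cap B\rvert\ge 2k-n$, which is negative in our regime $k\approx\tfrac{2n}{5}$. The key step is therefore to estimate the three common neighbourhoods \emph{simultaneously}. Applying inclusion--exclusion to $A\cup B\cup C\subseteq V(G)$ and keeping the nonnegative triple-intersection term yields
$$\lvert A\cap B\rvert+\lvert B\cap C\rvert+\lvert C\cap A\rvert\ =\ 3k-\lvert A\cup B\cup C\rvert+\lvert A\cap B\cap C\rvert\ \ge\ 3k-n,$$
which is positive precisely because $k$ exceeds $\tfrac{n}{3}$.

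It then remains only to feed in the opening observation $k\ge\tfrac{2n}{5}+\tfrac25$, which gives $3k-n-3\ge\tfrac{6n}{5}+\tfrac65-n-3=\tfrac n5-\tfrac95>\tfrac n5-2$, exactly the claimed bound. I expect no serious obstacle here; the only things to watch are the bookkeeping of the additive constants (subtracting the three copies of $T$ and retaining the slack $\tfrac95$ against the target $2$) and the disjointness check that licenses summing the three edge contributions. The real content is the realisation that the parity-driven strengthening $k\ge\tfrac{2n}{5}+\tfrac25$, rather than merely $k>\tfrac{2n}{5}$, is what pushes $3k-n$ comfortably past $\tfrac n5$.
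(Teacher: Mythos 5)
Your proof is correct and follows essentially the same route as the paper's: both count the triangles through each edge via $\lvert N(u_i)\cap N(u_j)\rvert$, sum over the three edges (subtracting the copies of $T$ itself), and bound $\lvert A\cap B\rvert+\lvert B\cap C\rvert+\lvert C\cap A\rvert\ge 3k-n$ by inclusion--exclusion over $A\cup B\cup C\subseteq V(G)$, finishing with the parity-strengthened bound $k\ge\frac{2n}{5}+\frac{2}{5}$. No discrepancies to report.
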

  \begin{proof}
    Let $t$ be the number of triangles that intersect the triangle formed by the vertices
    $u_1$, $u_2$ and $u_3$ on an edge.
    Note that the number of triangles (including $u_1u_2u_3$) that contains the edge $u_iu_j$
    is equal to $|N(u_i)\cap N(u_j)|$,
    so $t = |N(u_1)\cap N(u_2)| + |N(u_2)\cap N(u_3)| + |N(u_3)\cap N(u_1)| - 3$.
    By the inclusion--exclusion principle,
    \[
    n \geq |N(u_1) \cup N(u_2) \cup N(u_3)|
    \geq 3k - (t + 3) \geq \frac{6}{5}n +\frac{6}{5} - t - 3.
    \]
    It follows that $t > \frac{n}{5} - 2$.
  \end{proof}
  Let~$S$ be the set of vertices of~$G$ that are contained
  in at least $\frac{1}{10}n$ triangles.
  It follows from the definition that the total number of triangles
  is at least $\frac{1}{3}\cdot\frac{n}{10}\cdot|S|$,
  so the result holds if $|S|\geq\frac{n}{10}$.
  In the following, we assume that $|S|<\frac{n}{10}$.
  \begin{claim}\label{claim:2 from S}
    Every triangle of~$G$ contains at least two vertices of~$S$.
  \end{claim}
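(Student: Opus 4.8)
The plan is to argue by contradiction. Suppose some triangle on vertices $u_1,u_2,u_3$ contains at most one vertex of $S$; then at least two of its vertices, say $u_2$ and $u_3$, lie outside $S$, so by the definition of $S$ each of them is contained in fewer than $\frac{1}{10}n$ triangles. The goal is to show this is incompatible with the edge-intersection count furnished by Claim~\ref{claim:triangle 1/5n intersection}.

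First I would fix the codegree notation already implicit in Claim~\ref{claim:triangle 1/5n intersection}: write $a=|N(u_1)\cap N(u_2)|$, $b=|N(u_2)\cap N(u_3)|$ and $c=|N(u_3)\cap N(u_1)|$, so that $a$, $b$, $c$ count the triangles through the three edges (each including $u_1u_2u_3$ itself). Exactly as in the proof of Claim~\ref{claim:triangle 1/5n intersection}, the number of triangles meeting $u_1u_2u_3$ on an edge equals $t=a+b+c-3$, since any triangle sharing two edges with $u_1u_2u_3$ must coincide with it. Claim~\ref{claim:triangle 1/5n intersection} then gives $a+b+c-3>\frac{1}{5}n-2$, that is, $a+b+c>\frac{1}{5}n+1$.

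The key local observation is that the triangles through the two edges $u_1u_2$ and $u_2u_3$ all contain $u_2$, and the only triangle counted by both is $u_1u_2u_3$; hence $u_2$ lies in at least $a+b-1$ triangles. Since $u_2\notin S$, this forces $a+b-1<\frac{1}{10}n$, and symmetrically $b+c-1<\frac{1}{10}n$ from $u_3\notin S$. Adding these two inequalities yields $a+2b+c<\frac{1}{5}n+2$, so $a+b+c<\frac{1}{5}n+2-b$. Because $u_1\in N(u_2)\cap N(u_3)$ we have $b\geq 1$, whence $a+b+c<\frac{1}{5}n+1$, contradicting the lower bound from the previous step.

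I do not anticipate a serious obstacle here; the single point requiring care is the double-counting at $u_2$, namely checking that the triangles through its two incident edges of the fixed triangle overlap only in $u_1u_2u_3$, which is what produces the clean bound $a+b-1$. Everything else is elementary arithmetic, and the slightly strengthened constant $k\geq\frac{2n}{5}+\frac{2}{5}$ recorded at the start of the proof leaves enough slack that the strict inequalities close up without any separate treatment of floors or parity.
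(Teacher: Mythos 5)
Your proof is correct and takes essentially the same route as the paper: both arguments hinge on the fact that every triangle sharing an edge with $T$ must contain one of the two vertices outside $S$, each of which lies in fewer than $\frac{1}{10}n$ triangles, contradicting Claim~\ref{claim:triangle 1/5n intersection}. The paper states this covering bound directly (fewer than $2\bigl(\frac{1}{10}n-1\bigr)=\frac{1}{5}n-2$ other triangles meet $T$ on an edge), whereas you perform the same count via explicit codegree bookkeeping with $a$, $b$, $c$; the conclusions coincide.
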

  \begin{proof}
    If otherwise, there is a triangle $T$ with two vertices $u_1$ and $u_2$
    that both intersect fewer than $\frac{1}{10}n$ triangles each, and then
    as every triangle intersecting $T$ on an edge contains (at least)
    one of $u_1$ and $u_2$, it follows that $T$ intersects
    fewer than $2\cdot(\frac{1}{10}n-1)=\frac{1}{5}n-2$
    other triangles on an edge, which
    contradicts Claim~\ref{claim:triangle 1/5n intersection}.
  \end{proof}
  \begin{claim}\label{claim:C5}
    $G\setminus S$ contains no $C_5$ as a subgraph.
  \end{claim}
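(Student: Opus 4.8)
The plan is to argue by contradiction: suppose $G\setminus S$ contains a $5$-cycle $v_1v_2v_3v_4v_5$, with indices read cyclically modulo $5$, so that all five vertices $v_i$ lie in $V(G)\setminus S$. The strategy is to convert the mere existence of this $C_5$, together with the degree lower bound $k>\frac{2n}{5}$, into a counting contradiction, using Claim~\ref{claim:2 from S} to forbid the triangles that such high degrees would otherwise be forced to create.

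First I would show that no vertex $w\in V(G)$ can be adjacent to two cyclically consecutive vertices $v_i,v_{i+1}$ of the cycle. Indeed, any such common neighbour $w$ would yield a triangle $v_iv_{i+1}w$, two of whose vertices --- namely $v_i$ and $v_{i+1}$ --- lie outside $S$; this triangle would then contain at most one vertex of $S$, contradicting Claim~\ref{claim:2 from S}. Allowing $w$ to range over all vertices (including the cycle vertices themselves) makes this step also rule out chords of the $C_5$, since the endpoint of a chord is automatically a common neighbour of some consecutive pair.

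With that in place, the key observation is that for every $w\in V(G)$ the index set $\{\,i : w\in N(v_i)\,\}$ is an independent set in the cycle $C_5$ on $\{1,\dots,5\}$, and therefore has size at most $2$. Double counting the incidences between the cycle and the whole vertex set then gives
\[
5k \;=\; \sum_{i=1}^{5}\lvert N(v_i)\rvert \;=\; \sum_{w\in V(G)}\bigl\lvert\{\,i : w\in N(v_i)\,\}\bigr\rvert \;\le\; 2n,
\]
whence $k\le \frac{2n}{5}$. This contradicts the bound $k\geq\frac{2n}{5}+\frac{2}{5}$ recorded at the start of the proof, and so $G\setminus S$ can contain no $C_5$.

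I expect there to be no genuine obstacle here, only a point of careful bookkeeping: one must ensure that every triangle produced really has two of its vertices outside $S$ so that Claim~\ref{claim:2 from S} applies --- this is exactly why it matters that all five cycle vertices, and the consecutive pairs in particular, avoid $S$ --- and one must check that the largest independent set in $C_5$ has size precisely $2$, since it is this value that makes the constant in the double count align with the threshold $k>\frac{2n}{5}$.
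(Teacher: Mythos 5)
Your proof is correct and follows essentially the same route as the paper: both use Claim~\ref{claim:2 from S} to rule out common neighbours of consecutive cycle vertices, observe that each vertex of $G$ is therefore adjacent to at most $\alpha(C_5)=2$ cycle vertices, and derive the contradiction by double counting against the bound $k>\frac{2n}{5}$. The only cosmetic difference is that the paper phrases the count as $\lvert N(C)\rvert \geq \frac{5k}{2} > n$ rather than $5k \leq 2n$, which is the same inequality.
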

  \begin{proof}
    Assume for a contradiction that
    $C$ is a $5$-cycle of $G\setminus S$.
    By Claim~\ref{claim:2 from S},
    we know that no triangle of $G$ contains an edge
    of $G\setminus S$.
    It follows that $N_G(u)\cap N_G(v)=\varnothing$
    whenever $uv$ is an edge of $G$.
    As a consequence, a vertex $v$ of $G$ is adjacent to
    at most $\alpha(C_5)=2$ vertices of $C$.

    It follows by double-counting that
    \[
    |N(C)| \geq \frac{5k}{2} > n,
    \]
    which yields a contradiction as $|N(C)|\leq n$.
  \end{proof}
  \begin{claim}\label{claim:bipartite}
    $G\setminus S$ is bipartite.
  \end{claim}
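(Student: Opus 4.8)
The plan is to deduce bipartiteness of $G\setminus S$ from a minimum-degree condition via the Andr\'asfai--Erd\H{o}s--S\'os theorem (Theorem~\ref{thr:AES}), exploiting the fact that $G\setminus S$ avoids both short odd cycles $C_3$ and $C_5$.

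First I would record the two forbidden-subgraph facts. Any triangle lying entirely inside $G\setminus S$ would have all three of its vertices outside $S$, contradicting Claim~\ref{claim:2 from S}; hence $G\setminus S$ is triangle-free. Combined with Claim~\ref{claim:C5}, this shows $G\setminus S$ contains neither $C_3$ nor $C_5$, so every odd cycle of $G\setminus S$, if any exists, has length at least $7$. Next I would control the parameters of $G':=G\setminus S$. Its order is $n'=n-|S|$, and since deleting $S$ removes at most $|S|$ of the $k$ neighbours of each surviving vertex, the minimum degree of $G'$ satisfies $\delta'\ge k-|S|$. Using $k\ge\frac{2n}{5}+\frac{2}{5}$ (established at the start of the proof) together with the standing case assumption $|S|<\frac{n}{10}$, one checks that $\delta'-\frac{2n'}{7}\ge\bigl(\frac{2n}{5}+\frac{2}{5}-|S|\bigr)-\bigl(\frac{2n}{7}-\frac{2|S|}{7}\bigr)=\frac{4n}{35}+\frac{2}{5}-\frac{5|S|}{7}$, which exceeds $\frac{4n}{35}-\frac{n}{14}=\frac{3n}{70}>0$; thus $\delta'>\frac{2n'}{7}$.

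Finally I would invoke Theorem~\ref{thr:AES} with $\ell=3$: a non-bipartite graph on $n'$ vertices with minimum degree greater than $\frac{2n'}{7}$ must contain an odd cycle of length at most $5$. Since $G'$ contains no $C_3$ and no $C_5$, it cannot be non-bipartite, so $G\setminus S$ is bipartite, as desired.

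The main obstacle is choosing the correct value of $\ell$ in Theorem~\ref{thr:AES}. The cruder application with $\ell=2$ (triangle-freeness alone) is too weak, since its threshold $\frac{2n'}{5}$ is \emph{not} met by the bound $\delta'\ge k-|S|$ for $|S|$ as large as $\frac{n}{10}$; it is precisely the stronger $C_5$-free conclusion of Claim~\ref{claim:C5} that lowers the required degree threshold to $\frac{2n'}{7}$ and makes the degree estimate go through.
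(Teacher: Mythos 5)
Your proof is correct, but it takes a genuinely different route from the paper's. The paper argues from scratch: assuming $G\setminus S$ is non-bipartite, it takes a shortest odd cycle $u_1\dots u_{2m+1}$ of $G\setminus S$ (of length at least $7$ by triangle-freeness and Claim~\ref{claim:C5}), uses Claim~\ref{claim:2 from S} together with the minimality of the cycle to show that the pairwise common neighbourhoods of $u_1$, $u_2$ and $u_{m+2}$ are contained in $S$, and concludes by inclusion--exclusion that $\lvert S\rvert \ge 3k - n > \frac{n}{5}$, contradicting $\lvert S\rvert < \frac{n}{10}$. In effect, the paper inlines the proof technique of the Andr\'asfai--Erd\H{o}s--S\'os theorem, working inside $G$ with the full degree $k$. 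You instead invoke Theorem~\ref{thr:AES} itself, with $\ell=3$, as a black box applied to the induced subgraph $G\setminus S$; the price is the degree loss $\delta'\ge k-\lvert S\rvert$ measured against the smaller order $n'=n-\lvert S\rvert$, and your computation correctly shows this is affordable precisely because the $C_5$-freeness of Claim~\ref{claim:C5} lowers the threshold from $\frac{2n'}{5}$ to $\frac{2n'}{7}$ --- your closing remark that the $\ell=2$ version would not suffice is accurate and is the key observation. Your route is shorter and reuses a theorem the paper has already stated; the paper's route is self-contained, avoids any degree-loss bookkeeping by staying in $G$, and yields the explicit bound $\lvert S\rvert \ge 3k-n$, though that extra strength is not used elsewhere. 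Both arguments rest on the same two preceding claims, so the logical dependencies are identical.
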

  \begin{proof}
    Assume $G\setminus S$ is not bipartite and let us prove
    the lower bound on~$|S|$.
    Let $u_1\dots u_{2m+1}$ be an odd cycle of $G\setminus S$ with minimal
    length $2m+1$.
    We know from the triangle-freeness of $G\setminus S$
    and Claim~\ref{claim:C5} that $2m+1\geq 7$.
    Let us estimate the size of $N(\{u_1,u_2,u_{m+2}\})$ in $G$.

    First note that $u_1$ and $u_2$ have no common neighbour in $G$
    because of Claim~\ref{claim:2 from S}.
    Moreover, for $i\in\{1,2\}$ the vertices $u_{m+2}$ and $u_i$ cannot
    have a common neighbour in $G\setminus S$ because it would
    yield cycles of lengths $m+2$ or $m+3$ in $G\setminus S$.
    As one of these lengths is odd, this would contradict the minimality
    of $m$.
    As a consequence, $(N(u_1)\cup N(u_2))\cap N(u_{m+2})$
    is a subset of $S$.

    By the inclusion--exclusion principle,
    $3k - |S|\leq |N(\{u_1,u_2,u_{m+2}\})| \leq n $,
    so
    \[
    |S| \geq (3k - n) > \frac{1}{5}n,
    \]
    which contradicts our hypothesis.
  \end{proof}
  Let $A\cup B$ be a bipartition of $G\setminus S$.
  \begin{claim}
    There is a partition $S=S_1\cup S_2$ such that there is no edge between
    $S_2$ and $B$, and no edge between $S_1$ and $A$.
  \end{claim}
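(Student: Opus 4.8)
The plan is to build the partition $S = S_1 \cup S_2$ by controlling, for each vertex of $S$, whether it can send edges to $A$ or to $B$ (or neither). The key structural fact I would exploit is the conclusion established en route to Claim~\ref{claim:C5}: since no triangle of $G$ contains an edge of $G \setminus S$ (by Claim~\ref{claim:2 from S}), adjacent vertices of $G$ have no common neighbour, i.e.\ $N_G(u) \cap N_G(v) = \varnothing$ whenever $uv \in E(G)$. In particular $G \setminus S$ itself contains no triangle, which is consistent with $A \cup B$ being a genuine bipartition. The strategy is to argue that if a vertex $s \in S$ has a neighbour in $A$ and also a neighbour in $B$, one can derive a contradiction, so that each $s \in S$ is adjacent to at most one side of the bipartition.

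First I would make the local claim precise. Suppose $s \in S$ has a neighbour $a \in A$ and a neighbour $b \in B$. Since $A \cup B$ is a bipartition of the bipartite (hence connected-component-wise well-behaved) graph $G \setminus S$, and since $G \setminus S$ has large minimum degree relative to $|S|$ (recall $k \ge \frac{2n}{5} + \frac{2}{5}$ while $|S| < \frac{n}{10}$, so vertices of $G \setminus S$ retain degree greater than $\frac{2n}{5} - \frac{n}{10}$ inside $G \setminus S$), the two sides $A$ and $B$ are richly connected. The aim is to locate a short path in $G \setminus S$ between (a neighbour of) $a$ and (a neighbour of) $b$ of the right parity so that, together with the two edges $sa$ and $sb$, it closes up an odd cycle or a triangle. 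A triangle through $s$ is fine (it need not be forbidden), so the real target is a parity contradiction with the bipartiteness of $G \setminus S$ or with $N_G(u) \cap N_G(v) = \varnothing$ for edges.

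The cleanest route, which I would pursue, is to avoid paths entirely and argue as follows. For each vertex $s \in S$, consider its neighbourhood within $G \setminus S$. I claim this neighbourhood lies entirely in $A$ or entirely in $B$. Indeed, if $s$ had neighbours $a \in A$ and $b \in B$, then since $ab \notin E(G \setminus S)$ would be needed to avoid a triangle $sab$, but more usefully: examine any two neighbours $x, y$ of $s$ in $G \setminus S$. As $sx, sy \in E(G)$, the relation $N_G(s) \cap N_G(x) = \varnothing$ forces that no further structure collides; the genuinely forcing observation is that $x$ and $y$ cannot be adjacent (else $sxy$ is a triangle with $xy$ an edge of $G \setminus S$, contradicting Claim~\ref{claim:2 from S}), so $N(s) \cap (G \setminus S)$ is an independent set, which is automatic but insufficient. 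I would therefore define $S_1 = \{s \in S : s \text{ has no neighbour in } A\}$ and $S_2 = S \setminus S_1$, and then show $S_2$ has no neighbour in $B$ by using the odd-girth argument of Claim~\ref{claim:bipartite}: a vertex with neighbours on both sides creates an odd closed walk short enough to extract a short odd cycle meeting $S$ only at $s$, contradicting minimality unless it routes through $S$, which the size bound $|S| < \frac{n}{10}$ combined with the degree lower bound rules out.

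The main obstacle I anticipate is precisely this last parity-and-length bookkeeping: turning ``$s$ sees both $A$ and $B$'' into a concrete forbidden configuration. The difficulty is that $G \setminus S$ may have several components, and a bipartition is only determined up to swapping $A$ and $B$ within each component, so the definitions of $S_1$ and $S_2$ must be made component-by-component and then checked for global consistency. I expect the resolution to be that within each component the large minimum degree (greater than $\frac{n}{10}$) forces the component to be large and its two sides to be interconnected enough that any $s$ adjacent to both sides of a single component yields, via a short even path between two of its neighbours, an odd cycle through $s$ and that component only — and such a cycle, having length bounded in terms of the component diameter, can be made to avoid all of $S$ except $s$, after which deleting $s$ leaves a genuinely odd closed structure in the bipartite graph $G \setminus S$, the desired contradiction. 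Handling the interface between distinct components of $G \setminus S$, and ensuring the labelling of $A$ and $B$ is chosen coherently across them, is where the argument will need the most care.
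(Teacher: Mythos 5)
Your central strategy cannot work, because the configuration you are trying to forbid is not forbidden by parity. You aim to derive a contradiction from a vertex $s\in S$ with neighbours $a\in A$ and $b\in B$ by closing an odd-length $a$--$b$ path of $G\setminus S$ through $s$ into an odd cycle. But that odd cycle passes through $s\in S$, so it lives in $G$, not in $G\setminus S$, and odd cycles in $G$ are in no way contradictory: $G$ is a regular graph of odd order, hence non-bipartite, and indeed the very theorem being proved asserts that $G$ contains $\Omega(n^2)$ triangles. Only $G\setminus S$ is known to be bipartite (Claim~\ref{claim:bipartite}). Your proposed resolution --- that ``deleting $s$ leaves a genuinely odd closed structure in the bipartite graph $G\setminus S$'' --- is false: deleting one vertex from an odd cycle leaves an odd-length \emph{path} between its two cycle-neighbours, and an odd path between $a\in A$ and $b\in B$ is exactly what bipartiteness predicts for vertices on opposite sides; it is not a contradiction. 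Likewise, the ``minimality'' you invoke has no referent: after Claim~\ref{claim:bipartite} there is no minimal odd cycle of $G\setminus S$ to compare against. A smaller but related slip: the fact $N_G(u)\cap N_G(v)=\varnothing$ holds only when $uv$ is an edge of $G\setminus S$ (this is how it is used in Claim~\ref{claim:C5}); it cannot hold for every edge of $G$, else $G$ would be triangle-free, contradicting the conclusion of the theorem.

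What the claim actually requires is a counting argument driven by regularity, which your sketch never deploys. Suppose $s\in S$ has neighbours $a\in A$ and $b\in B$. By Claim~\ref{claim:2 from S}, a common neighbour of $s$ and $a$ lying outside $S$ would form a triangle with only one vertex in $S$, so $N(s)\cap N(a)\subseteq S$, and likewise $N(s)\cap N(b)\subseteq S$; and a common neighbour of $a$ and $b$ outside $S$ would have to lie simultaneously in $B$ and in $A$ by bipartiteness of $G\setminus S$, so $N(a)\cap N(b)\subseteq S$. Thus the three neighbourhoods, each of size exactly $k$, pairwise overlap only inside $S$, whence
\[
3k \;=\; |N(a)|+|N(b)|+|N(s)| \;\le\; |N(a)\cup N(b)\cup N(s)| + 2|S| \;\le\; n + 2|S|,
\]
so $|S|\ge \tfrac12(3k-n) > \tfrac{n}{10}$ (using $k\ge \tfrac{2n}{5}+\tfrac25$), contradicting the standing assumption $|S|<\tfrac{n}{10}$. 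Note also that the multi-component issue you flag as the main difficulty is moot in this argument: the bipartition $A\cup B$ is fixed once and for all, and if $a$ and $b$ lie in different components of $G\setminus S$ then $N(a)\cap N(b)\setminus S$ is empty automatically. The claim is a quantitative statement about $|S|$ and the degree $k$, not a parity statement, and no amount of care in the path-and-parity bookkeeping will substitute for this count.
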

  \begin{proof}
    If otherwise, then there are vertices~$a\in A$, $b\in B$
    and $s\in S$ such that $as$ and~$bs$ are edges.
    As every triangle of $G$ contains at least two vertices of~$S$,
    the vertex $a$ has no neighbor in $N(s)\cap B$.
    Similarly, the vertex $b$ has no neighbor in $N(S)\cap A$.
    It follows that $N(s)\cap N(a)$, $N(s)\cap N(b)$
    and $N(a)\cap N(b)$ are included in $S$.
    As a consequence,
    \[
    3k \leq |N(a)|+|N(b)|+|N(s)| \leq n + 2|S|.
    \]
    It follows that $|S|\geq \frac{1}{2}(3k - n) > \frac{n}{10}$,
    which contradicts our hypothesis on $|S|$.
  \end{proof}
  We may assume by symmetry that $|A\cup S_1|\leq |B\cup S_2|$.
  As $|A\cup S_1| + |B\cup S_2|=n$ and $n$ is odd,
  it follows that $|A\cup S_1| \leq (n-1)/2$
  and $|B\cup S_2| \geq (n+1)/2$.
  \begin{claim}
    The number $e_2$ of edges in the induced subgraph
    $G[S_2]$ is at least $k/2$.
  \end{claim}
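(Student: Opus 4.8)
The plan is to exploit the $k$-regularity of $G$ together with the parity of $n$ by comparing the two sides of the partition $V(G) = (A\cup S_1)\cup(B\cup S_2)$ supplied by the previous claim. Write $V_1 = A\cup S_1$ and $V_2 = B\cup S_2$, so that, by the normalisation just made, $|V_2|\geq (n+1)/2$ and $|V_1|\leq (n-1)/2$, whence $|V_2|-|V_1|\geq 1$ since $n$ is odd. The first step is to pin down which edges can lie inside each part. Since $A\cup B$ is a bipartition of $G\setminus S$, both $A$ and $B$ are independent sets of $G$: any edge inside $A$ or inside $B$ would be an edge of the induced subgraph $G\setminus S$, contradicting the bipartition. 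Combining this with the two non-adjacency conditions of the previous claim—no edge between $S_2$ and $B$, and no edge between $S_1$ and $A$—we see that the only edges inside $V_2$ are those inside $S_2$, and likewise the only edges inside $V_1$ are those inside $S_1$. In particular $e(G[V_2]) = e_2$, and $e(G[V_1]) = e(G[S_1]) \geq 0$.

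Next I would count edge-endpoints on each side using regularity. Summing degrees over $V_2$ gives $k|V_2| = 2\,e(G[V_2]) + e(V_1,V_2)$, where $e(V_1,V_2)$ denotes the number of crossing edges, and the analogous identity holds for $V_1$. Subtracting the two identities cancels the crossing term and yields
\[
k\,(|V_2|-|V_1|) = 2\bigl(e(G[V_2]) - e(G[V_1])\bigr).
\]
Since $|V_2|-|V_1|\geq 1$, the left-hand side is at least $k$, so $e(G[V_2]) - e(G[V_1]) \geq k/2$. Substituting $e(G[V_2]) = e_2$ and $e(G[V_1])\geq 0$ from the first step gives $e_2 \geq k/2$, as required.

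There is no serious obstacle here: the argument reduces to a short double-counting once the edge structure inside the two parts is understood. The only point requiring care is the very first step—verifying that the partition genuinely forces every edge of $V_2$ to lie inside $S_2$—which relies simultaneously on the bipartiteness of $G\setminus S$ (so that $B$ is independent in $G$) and on the separation property between $S$ and the two colour classes. The role of the odd parity of $n$ is precisely to guarantee the strict size gap $|V_2|-|V_1|\geq 1$, which is what converts the regularity into the clean bound $k/2$.
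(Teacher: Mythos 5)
Your proof is correct and is essentially the same double-counting argument as the paper's: both sum the degrees over the two sides of the partition $(A\cup S_1)\cup(B\cup S_2)$, use the fact that all edges inside $B\cup S_2$ lie in $S_2$, and extract the bound from the parity gap $|B\cup S_2|-|A\cup S_1|\geq 1$. The only cosmetic difference is that the paper bounds the crossing edges by $e\leq k|A\cup S_1|$ directly, whereas you subtract the two degree-sum identities and drop $e(G[S_1])\geq 0$; these manipulations are equivalent, and your version has the minor merit of spelling out explicitly the structural facts (independence of $A$ and $B$, and the separation from the previous claim) that the paper uses implicitly.
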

  \begin{proof}
    Let $e$ denote the number of edges from $A\cup S_1$ to
    $B\cup S_2$.
    It holds that $e \leq k\cdot|A\cup S_1|\leq k(n-1)/2$
    and $2e_2 + e = k\cdot |B\cup S_2|\geq k(n+1)/2$.
    As a consequence,
    \[
    2e_2 \geq k(n+1)/2 - k(n-1)/2 = k,
    \]
    which proves the claim.
  \end{proof}
  We are now ready to conclude the proof.

  First note that every edge $uv$ of $G[S_2]$ is contained in at least
  $\frac{n}{5}$ triangles of $G$.
  Indeed, we know that $N(u)$ and $N(v)$ are subsets of $A\cup S$,
  so
  \[
  |N(u) \cap N(v) | \geq |N(u)|+|N(v)|-|A|-|S|
  \geq 2k - \frac{n-1}{2} - \frac{n}{10}
  > \frac{n}{5}. 
  \]
  
  As there are at least $k/2$ such edges and a triangle
  contains at most three of them, we conclude that the number of
  triangles in $G$ is at least
  \[
  \frac{1}{3}\cdot\frac{k}{2}\cdot\frac{n}{5}
  > \frac{n^2}{75}.\qedhere
  \]
\end{proof}

We can show slightly more.

\begin{thr}\label{thr:SaturationRegMantel}
	When $n$ is odd and $k$ is an even number with $2 \lfloor \frac n5 \rfloor < k \le 2\lfloor \frac n4 \rfloor,$ every $k$-regular graph on $n$ vertices has $\Omega(n^2)$ triangles. Moreover, this is sharp up to the multiplicative constant.
\end{thr}

\begin{proof}
	The lower bound is proven in Theorem~\ref{thr:quadraticsaturationK3}.
	So now we prove sharpness of the result. Let $n=2x+1$ and $2 \lfloor \frac n5 \rfloor<k=x-y\le  2\lfloor \frac n4 \rfloor.$ We construct a $k$-regular graphs with $O(n^2)$ triangles.
	For this take a $K_{x,x}$, delete $y$ disjoint complete matchings and delete another disjoint matching of size $\frac k2$ (i.e.~on $k$ vertices). Now connect the endvertices of that last matching with an additional vertex $v$.
	Every triangle in the resulting graph contains the additional vertex $v$, from which the conclusion follows.
	For $k=2 \lfloor \frac n5 \rfloor+2$, this gives a construction with approximately $\frac{n^2}{50}$ triangles.
\end{proof}

\section{A regular version for all nonbipartite graphs}\label{sec:reg_genThrs}

\begin{thr}\label{thr:construction(r-1)partite}
	Let $r \ge 4$. For every $n \in \mathbb N$, there exists a $k$-regular $(r-1)$-partite graph with $k=\left( 1- \frac1{r-1} + o(1) \right) n.$
\end{thr}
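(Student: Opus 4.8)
The plan is to construct a $k$-regular $(r-1)$-partite graph on $n$ vertices where each part has size as close to $n/(r-1)$ as possible, and show that such a graph can be made regular with $k = (1 - 1/(r-1) + o(1))n$. The target construction is essentially a balanced complete $(r-1)$-partite graph (a blow-up of $K_{r-1}$) in which each vertex has degree equal to $n$ minus the size of its own part; when the parts are equal, this degree is exactly $(1 - 1/(r-1))n$. The difficulty is purely number-theoretic: for general $n$ the parts cannot all have exactly size $n/(r-1)$, and the complete multipartite graph is only regular when all parts are equal. So the real content is correcting for divisibility.

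First I would write $n = (r-1)q + s$ with $0 \le s < r-1$, so that a balanced partition has $s$ parts of size $q+1$ and $r-1-s$ parts of size $q$. In the complete $(r-1)$-partite graph the vertices in the larger parts have degree $n - (q+1)$ and those in the smaller parts have degree $n - q$, a discrepancy of exactly $1$. To regularise, the plan is to equalise degrees by deleting a sparse almost-regular ``correction'' graph supported appropriately, or — more robustly — to target degree $k = n - (q+1) = (1 - 1/(r-1) + o(1))n$ and remove, from each vertex currently of degree $n - q$ (those in the smaller parts), a single edge in a way that is consistent across the graph. The cleanest route is to delete a set of edges forming a suitable regular/near-regular bipartite or multipartite structure among the under-deleted vertices so that every vertex loses the same total number of edges. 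Since the degree gap is only $1$ and the number of vertices needing correction has the right parity control (one can adjust which parts get the extra vertex, and use that $r - 1 \ge 3$ gives at least two full-size or two small parts to route a matching between), this is always achievable for large $n$, with only $O(n)$ edges touched and hence no effect on the $(1 + o(1))$ factor.

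The main obstacle I expect is the parity/divisibility bookkeeping, analogous to the delicate handling already used in the proofs of Theorem~\ref{thr:regularMantel} and Theorem~\ref{thr:ex_r_odd_girth}, where stable sets of sizes $x$, $x+y$, $x-y$ were chosen and $y$ disjoint perfect matchings were removed to fix degrees. Here I would mimic that device: choose the part sizes to differ by a bounded amount, then remove a bounded number of disjoint perfect (or near-perfect) matchings between carefully chosen pairs of parts so that every vertex ends with the same degree $k$. The handshake constraint ($nk$ even) must be respected, but since we only claim existence with an $o(1)$ slack in $k$, we are free to shift $k$ by $1$ if needed to fix parity, absorbing it into the error term. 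Because $r - 1 \ge 3$, there is always enough room to distribute the corrections across at least three parts, which is exactly what guarantees the matchings can be taken disjoint and the resulting graph remains $(r-1)$-partite and simple.

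Concretely, the construction proceeds as follows: take the complete $(r-1)$-partite graph on the balanced partition, set the target degree to $k = \min_i (n - |S_i|)$, and for each vertex whose degree exceeds $k$, remove the appropriate number of incident edges by deleting disjoint matchings between over-degree parts. Verifying that these matchings exist and are disjoint is the only step requiring care, and it follows from the bounded degree gap together with $r - 1 \ge 3$; I would then simply check that $k = (1 - 1/(r-1) + o(1))n$, which is immediate since each $|S_i| = n/(r-1) + O(1)$.
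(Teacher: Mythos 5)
Your reduction to a degree-correction problem on the balanced complete $(r-1)$-partite graph is reasonable in spirit, but the correction step you describe fails in concrete cases, and since (as you say yourself) the whole content of the theorem is this number-theoretic bookkeeping, this is a genuine gap. Write $n=(r-1)q+s$. Your plan is to bring every vertex down to $k=n-(q+1)$ by deleting ``disjoint matchings between over-degree parts'', i.e.\ a perfect matching on the union of the parts of size $q$. Take $r=4$ and $n=3q+2$, so the parts are $q+1,q+1,q$: there is exactly \emph{one} small part, its $q$ vertices form a stable set, and every edge leaving it ends at a vertex already at the target degree $n-q-1$. So no set of edge deletions can lower the small-part degrees without pushing some large-part vertex below $k$; in fact no $(n-q-1)$-regular $3$-partite graph on $3q+2$ vertices exists at all, since the part sizes are forced to be $(q+1,q+1,q)$ and regularity then forces the large-part vertices to be complete to the small part, giving the small-part vertices degree $2q+2$. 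Your claim that $r-1\ge 3$ always leaves ``two full-size or two small parts to route a matching between'' is exactly wrong here, and this case ($s=r-2$) occurs for infinitely many $n$. A second unresolved case: even with two or more small parts the target $k=n-q-1$ can violate the handshake condition (e.g.\ $r=6$, $n=5q+2$, $q$ odd gives $n$ and $k$ both odd, and also an odd number $3q$ of small-part vertices, so no perfect matching on them exists). Your fallback of shifting $k$ down by $1$ then changes the object to be deleted from a matching on the small parts into a spanning subgraph with degree $2$ on small parts and degree $1$ on large parts; that is a genuinely different factor-existence problem, which ``matchings between over-degree parts'' cannot produce and whose solvability you never establish.

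These cases are repairable (the required degree-prescribed factors do exist for large $n$), but proving that is precisely the missing content of the proof. It is worth seeing how the paper avoids the issue entirely: rather than balancing all $r-1$ parts, it writes $n=(r-1)x+y$ with $x$ \emph{even} and $0\le y\le 2r-3$, takes $r-2$ parts of exactly equal size $x$, and puts the whole remainder into one oversized stable set of size $x+y$. It then removes a $y$-factor from the complete $(r-2)$-partite graph on the equal parts (which exists cleanly as a union of $y$ perfect matchings, since that graph is balanced, regular and of even order) and joins the stable set completely to it; every vertex then has degree exactly $(r-2)x=\left(1-\frac{1}{r-1}+o(1)\right)n$. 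Because the only deletion is a regular factor inside a perfectly balanced multipartite graph, no irregular correction, no case analysis on $s$, and no parity repair are ever needed.
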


\begin{proof}
	Write $n=(r-1)x+y$ with $x$ even and $0 \le y \le 2r-3.$
	As the statement is an asymptotic one, we only have to deal with $n$ large and so we can assume $(r-2)x>y.$
	Construct a complete $(r-2)$-partite graph $K_{x,x,\ldots, x}$ and remove a $y$-factor of it. 
	Now connect all edges between a stable set of size $x+y$ and all vertices of this graph.
	The resulting graph is a $(r-2)x$-regular graph on $n$ vertices.
\end{proof}

As a corollary to Theorem~\ref{thr:construction(r-1)partite}, the conclusions for the regular versions of Tur\'an's theorem and the Erd\H{o}s--Stone theorem are unchanged from their classical forms, if the chromatic number of the forbidden graph $H$ satisfies $\chi(H)\not= 3.$

\begin{thr}[Regular Erd\H{o}s--Stone theorem for $\chi(H)
\not= 3$]\label{thr:RegularErdosStonenot3}
	Let $H$ be a graph with $\chi(H)\not=3.$ Then	
	$$\ex_r(n, H)=\left(1-\dfrac 1 {\chi(H)-1}+o(1) \right) n.$$
\end{thr}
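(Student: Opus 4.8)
The plan is to prove matching upper and lower bounds, writing $r = \chi(H)$ throughout. The whole argument reduces to two standard facts together with the construction already established in Theorem~\ref{thr:construction(r-1)partite}. The key elementary link is that a $k$-regular graph on $n$ vertices has exactly $nk/2$ edges, so any $H$-free $k$-regular graph $G$ satisfies $nk/2 \le \ex(n,H)$, i.e.\ $k \le \frac{2}{n}\ex(n,H)$. This turns a bound on the classical extremal number into a bound on $\ex_r(n,H)$, and conversely a good regular construction gives a matching lower bound.

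For the upper bound I invoke the Erd\H{o}s--Stone theorem in the uniform form $\ex(n,H) = \left(1 - \frac{1}{r-1} + o(1)\right)\frac{n^2}{2}$, valid for all $r \ge 2$; when $r = 2$ this reads $\ex(n,H) = o(n^2)$, which is the Kővári--Sós--Turán regime. Dividing by $n/2$ gives immediately $\ex_r(n,H) \le \left(1 - \frac{1}{r-1} + o(1)\right)n$, exactly the claimed value. In particular, when $H$ is bipartite this says $\ex_r(n,H) = o(n)$, consistent with $1 - \frac{1}{r-1} = 0$.

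For the lower bound I would use Theorem~\ref{thr:construction(r-1)partite} directly. For $r \ge 4$ it produces, for every $n$, a $k$-regular $(r-1)$-partite graph $G$ with $k = \left(1 - \frac{1}{r-1} + o(1)\right)n$. The one observation to record is that such a $G$ is $H$-free: being $(r-1)$-partite, $G$ is properly $(r-1)$-colourable, hence so is every subgraph of $G$, whereas $\chi(H) = r > r-1$, so $H$ cannot embed in $G$. This supplies the matching lower bound for $r \ge 4$; for $r = 2$ the target is $o(n)$ and no construction is needed.

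Since the substantive content has been packaged into Theorem~\ref{thr:construction(r-1)partite} --- whose role is precisely to guarantee a regular $(r-1)$-partite graph of near-optimal degree for \emph{all} $n$, not merely divisible ones --- there is no single hard step remaining here; the only point requiring care is the colouring argument above. It is worth flagging why the hypothesis $r \ne 3$ is essential: for $r = 3$ the analogous construction would have to be $2$-partite, i.e.\ bipartite, which is impossible for a regular graph of odd order. This is exactly the parity obstruction responsible for the anomalous behaviour recorded in Theorem~\ref{thr:ex_r_OddCycles}, and its absence for $r \ne 3$ is what makes the classical asymptotics survive.
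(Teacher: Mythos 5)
Your proposal is correct and follows essentially the same route as the paper, which presents this theorem as an immediate corollary of Theorem~\ref{thr:construction(r-1)partite}: the upper bound via the edge count $nk/2\le\ex(n,H)$ and the classical Erd\H{o}s--Stone theorem (with the K\H{o}vari--S\'os--Tur\'an regime handling $\chi(H)=2$), and the lower bound for $\chi(H)\ge 4$ from the $k$-regular $(r-1)$-partite construction, which is $H$-free since $\chi(H)=r>r-1$. You have merely written out the routine details that the paper leaves implicit, and your closing remark about the parity obstruction for $\chi(H)=3$ matches the paper's motivation exactly.
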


We already saw in Theorem~\ref{thr:ex_r_OddCycles} that there are graphs $H$ with $\chi(H)=3$ for which the regular Erd\H{o}s--Stone theorem differs from the classical statement.
Next we characterise all such graphs $H$ (with $\chi(H)=3$).

We denote with $K_{2x,y}^=$ a complete bipartite graph $K_{2x,y}$ with a perfect matching in the part of size $2x$.

\begin{thr}[Regular Erd\H{o}s--Stone theorem for $\chi(H)= 3$]\label{thr:RegularErdosStone3}
	Let $H$ be a graph with $\chi(H)=3$.
	\begin{enumerate}
	\item\label{itm:i}
	Suppose one of the following holds:
	\begin{itemize}
	\item for every vertex $v$ of~$H$, the graph~$H\backslash v$
          is not bipartite; or
        \item $H$ is not a subgraph of $K_{2\lvert H \rvert, \lvert H \rvert}^=$.
	\end{itemize}	 
	Then $\ex_r(n,H)= \frac n2 +o(n).$
	\item\label{itm:ii}
	 If neither of the above hold and $n$ is odd, then $\ex_r(n,H) \le 2\floor{\frac{n}{5}}.$
	\end{enumerate}
\end{thr}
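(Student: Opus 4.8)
The plan is to handle the two parts quite differently: part~\ref{itm:i} is about producing good regular constructions, whereas part~\ref{itm:ii} is about forcing the forbidden structure inside every dense regular graph of odd order.

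For part~\ref{itm:i}, the upper bound $\ex_r(n,H)\le \frac n2+o(n)$ needs nothing beyond the classical Erd\H{o}s--Stone theorem: since $\chi(H)=3$, any $H$-free graph has at most $(\frac14+o(1))n^2$ edges, so a $k$-regular $H$-free graph satisfies $\frac{kn}{2}\le(\frac14+o(1))n^2$, i.e.\ $k\le\frac n2+o(n)$. The content is the matching lower bound, i.e.\ $k$-regular $H$-free graphs with $k=\frac n2-o(n)$. For even $n$ one takes $K_{n/2,n/2}$, which is $\frac n2$-regular and bipartite, hence $H$-free; the whole difficulty is odd $n$, where a regular graph cannot be bipartite. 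Here I would use one of two constructions. If $H\backslash v$ is non-bipartite for every $v$, take $G_1$ to be a balanced almost-complete bipartite graph on $n-1$ vertices (delete a small matching to correct degrees) together with one apex $w$ joined to a balanced set of $\frac{n-1}2$ vertices, arranged so that $G_1$ is $\frac{n-1}2$-regular and $G_1\backslash w$ is bipartite. Any copy of $H$ in $G_1$ either avoids $w$, lying in the bipartite graph $G_1\backslash w$ (impossible, as $H$ is not bipartite), or meets $w$ as the image of some vertex $v$, forcing $H\backslash v\subseteq G_1\backslash w$ to be bipartite, contrary to hypothesis; so $G_1$ is $H$-free. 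If instead $H\not\subseteq K^=_{2\sizeof{H},\sizeof{H}}$, take $G_2$ to be an almost-complete bipartite graph of odd order with a near-perfect matching inside the larger side, made regular of degree $\frac n2-O(1)$; this is a $K^=$-type graph, and any copy of $H$ in $G_2$ uses at most $\sizeof{H}$ vertices on each side and induces a matching inside the larger side, hence embeds into $K^=_{2\sizeof{H},\sizeof{H}}$, a contradiction. The only nuisance is matching the parity of $\frac n2$ to the degree, absorbed in the $o(n)$ slack.

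For part~\ref{itm:ii} the hypotheses say that there is a vertex $v_0$ with $H\backslash v_0$ bipartite \emph{and} that $H\subseteq K^=_{2\sizeof{H},\sizeof{H}}$; together these make $H$ \emph{thin} (in particular it cannot contain $K^=_{4,2}$, since two vertices are needed to destroy all odd cycles of $K^=_{4,2}$). I would show that every $k$-regular graph of odd order $n$ with $k>2\floor{n/5}$ contains $H$. The engine is the structural analysis inside the proof of Theorem~\ref{thr:quadraticsaturationK3}: unless $\Omega(n)$ vertices each lie in $\Omega(n)$ triangles (in which case triangles are so abundant that the embedding below goes through a fortiori), one obtains a bipartition $A\cup B$ of $G\setminus S$ with $\sizeof{S}<\frac n{10}$, at least $\frac k2$ ``heavy'' edges inside $S_2$ each with at least $\frac n{10}$ common neighbours in $A$, and the fact that every vertex of $S_2$ has at least $\frac{3n}{10}$ neighbours in $A$. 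The embedding then realises the unique odd part of $H$ (the vertex $v_0$ together with the matching edges of its $K^=$-decomposition) on these heavy edges and their common neighbourhoods, and grows the bipartite remainder $H\backslash v_0$ into the dense pair $(A,B)$.

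The main obstacle is producing, inside a single common neighbourhood, a matching as large as the matching in the $K^=$-decomposition of $H$; this is already the crux for the friendship graphs $F_r=K_1\vee rK_2$, which lie in this case and for which one must find a vertex whose neighbourhood contains a matching of size $r$. The triangle count alone is useless here, so I would argue by a concentration dichotomy on the heavy edges of $S_2$: if some $w\in S_2$ has at least $r$ neighbours in $S_2$, then, since $w$ and each such neighbour have at least $\frac{3n}{10}$ neighbours in $A$, a greedy argument yields a size-$r$ matching between these neighbours and $A\cap N(w)$ inside $N(w)$; otherwise $G[S_2]$ has bounded maximum degree yet at least $\frac n5$ edges, hence a matching of $\Omega(n/r)$ heavy edges, and averaging over their large common neighbourhoods gives a vertex of $A$ adjacent to $r$ of these disjoint edges. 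A further subtlety, to be treated in the same spirit, is that no vertex of $G$ is adjacent to both $A$ and $B$, so the odd gadget of $H$ must sit entirely on the $A$--$S_2$ side; thus one cannot naively map the two sides of $H\backslash v_0$ to $A$ and $B$, and the bipartition of $H\backslash v_0$ must first be aligned with the $A$/$S_2$ interface before the bipartite bulk is extended into $B$. Checking that all edges of $H$ are simultaneously realised under this alignment is the technical heart of the argument.
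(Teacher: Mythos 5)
Your treatment of part~\ref{itm:i} is correct and essentially identical to the paper's proof: the same two constructions (an apex vertex over a regularised near-balanced complete bipartite graph when $H\setminus v$ is never bipartite, and a regularised $K_{x+1,x}$ with a matching inside the larger side, which is a subgraph of some $K_{2a,a}^=$, when $H\not\subseteq K_{2\sizeof{H},\sizeof{H}}^=$), with the same freeness arguments.

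Part~\ref{itm:ii} is where the proposal has genuine gaps, and your route differs from the paper's. The paper never embeds $H$ into the structural decomposition from the proof of Theorem~\ref{thr:quadraticsaturationK3}. Instead it first observes that $G[N(u)]$ is $(H\setminus v)$-free and hence $K_{t,t}$-free, where $t=\sizeof{H}$ (this is where the hypothesis that $H\setminus v$ is bipartite is used analytically), so by the K\H{o}vari--S\'os--Tur\'an theorem every vertex lies in fewer than $n^{2-1/t}$ triangles; combining this per-vertex bound with the $\Omega(n^2)$ triangles guaranteed by Theorem~\ref{thr:quadraticsaturationK3} (used only as a black box) forces a maximal family of vertex-disjoint triangles, hence of disjoint thick edges, of size $\Omega(n^{1/t})$; and finally the asymmetric K\H{o}vari--S\'os--Tur\'an theorem, applied to the auxiliary bipartite graph of disjoint thick edges versus vertices, yields $t$ disjoint edges with $t$ common neighbours, i.e.\ a copy of $K_{2t,t}^=$, which contains every $H$ of this class. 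Your proposal is missing exactly the two ideas that make this work. First, your case $\sizeof{S}\ge\frac{n}{10}$ is dismissed with ``the embedding goes through a fortiori'', but in that case none of the objects your embedding uses ($A$, $B$, $S_2$, heavy edges with large common neighbourhoods in $A$) exist, and, as you yourself write two sentences later, the triangle count alone is useless; this case is simply unhandled, and it is precisely the per-vertex triangle bound above --- entirely absent from your proposal --- that converts triangle abundance into usable disjoint structure. Second, the step you defer as ``the technical heart'' is the actual difficulty, and your dichotomy cannot deliver it: both branches output either a single-apex configuration ($K_{2r,1}^=$, one vertex adjacent to $r$ disjoint edges) or a single heavy edge with many common neighbours ($K_{2,r}^=$), whereas a general $H$ in this class (already $C_5$, and any $H$ whose $Y$-vertices other than $v_0$ attach to prescribed single endpoints of several matching edges) requires further vertices of $A$ adjacent to \emph{prescribed} endpoints of the already-fixed disjoint edges. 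With the bounds you have (each $S_2$-vertex has at least $\frac{3n}{10}$ neighbours in $A$, while $\sizeof{A}$ may be close to $\frac{n}{2}$), a greedy or union bound fails as soon as three such constraints are imposed, so the simultaneity must come from a K\H{o}vari--S\'os--Tur\'an-type argument over the choice of the disjoint edges and apexes together --- which is exactly the paper's auxiliary-graph step, and the reason the paper aims at the universal container $K_{2t,t}^=$ rather than at $H$ itself.
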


\begin{proof}
We begin with the proof of~\ref{itm:i}.
  The upper bound is a consequence of the Erd\H{o}s--Stone theorem. 
  If~$n$ is even, the lower bound is given by the complete bipartite graph~$K_{\frac n2, \frac n2}$,
  so it is enough to give $k$-regular constructions of $H$-free graphs for odd $n$ and
  $k=\frac n2 +o(n)$.

  We distinguish two cases depending on which condition holds.
\begin{itemize} 
\item In the first case, namely if $H\setminus v$ is not bipartite for every $v\in V(H)$,
  the $k$-regular construction in the proof of Theorem~\ref{thr:SaturationRegMantel} for $k=2  \lfloor \frac n4 \rfloor$ does the job. Indeed, one can remove one vertex from the resulting graph $G$ such that it becomes bipartite, so all its subgraphs also have this property.
\item In the second case,
  let $n=2x+1$ and take $K_{x+1,x}$ with $\lfloor \frac{x+1}2 \rfloor$ disjoint edges added at the stable set of size $x+1$.
  If $x$ is odd, this is a the $(x+1)-$regular graph $K_{x+1,x}^=.$
  If $x$ is even, remove a maximum matching between the vertices of degree $x+1$ to get a $x$-regular graph.
  In both cases, the obtained graph is a subgraph of
  $K_{2a,a}^=$ for some~$a$ and therefore does not does not contain $H$.

\end{itemize}		
\medskip
We proceed to the proof of~\ref{itm:ii}.
  Fix an odd number $n$ and let $k >2\floor{\frac{n}{5}}$.
  Set $t=\lvert H \rvert$.
  Let~$G$ be a $k$-regular graph without $H$ as an induced subgraph.
  It follows from this last hypothesis that every neighbourhood~$N(u)$ in~$G$ contains
  no~$H\setminus v$ as a subgraph,
  and therefore no $K_{t,t}$ because $H\setminus v$ is bipartite.
  By the K\"{o}vari--S\'{o}s--Tur\'{a}n Theorem~\cite{KST54},
  it follows that~$G[N(u)]$ contains at most
  $\frac12(t-1)^{\frac{1}{t}}k^{2-\frac{1}{t}}+O(k)$ edges,
  which is smaller than $n^{2-\epsilon}$ if $\epsilon = \frac{1}{t}$ and~$n$ large enough.
  Equivalently, every vertex of~$G$ is contained in fewer than $n^{2-\epsilon}$ triangles.
  
  We say that an edge of $G$ is \emph{thick} if it is contained in at least $\frac{n}{15}$
  triangle.
  Let us show that $G$ contains a set $A\subseteq E(G)$ of
  $\Omega(n^{\epsilon})$ disjoint thick edges.
  
  We first proceed as in Claim~\ref{claim:triangle 1/5n intersection}
  in the proof of Theorem~\ref{thr:quadraticsaturationK3}
  to show that every triangle of~$G$ contains a thick edge.
  Indeed, consider a triangle $u_1u_2u_3$ in $G$. By symmetry
  we may assume
  that $\sizeof{N(u_1)\cap N(u_2)} \geq \sizeof{N(u_2)\cap N(u_3)} \geq \sizeof{N(u_1)\cap N(u_3)}$. 
  By the inclusion--exclusion principle,
	\[
	n \geq \sizeof{N(u_1)\cup N(u_2)\cup N(u_3)} > 3k - 3\sizeof{N(u_1)\cap N(u_2)}
	\]
	and thus it follows that $\sizeof{N(u_1)\cap N(u_2)} > \frac{n}{15}$, so $u_1u_2$ is thick.
	
	Let~$T$ be a maximal set of vertex-disjoint triangles.
	Since~$T$ is maximal, every triangle of~$G$ intersects a vertex of a triangle of~$T$.
	As each vertex is contained in at most $n^{2-\epsilon}$ triangles,
	it then follows from the hypothesis that $G$ has at most
        $3\sizeof{T}\cdot n^{2-\epsilon}$ triangles.
        Theorem~\ref{thr:quadraticsaturationK3} applied to $G$ then yields
	\[
	3|T| \cdot n^{2-\epsilon} \geq \frac{1}{300}n^2,
	\]
	so $\sizeof{T} \geq \frac{n^\epsilon}{900}$.
	
	It then suffices to construct~$A$ by choosing a thick edge in each triangle of~$T$.
	
	To conclude the proof, consider the bipartite auxiliary graph $F$
	on $V(F) = A \cup V$ such that
	for every $e \in A$ and $v \in V$, the pair
	$\{e, v\}$ is an edge of $F$ if and only if $e\cup\{v\}$ is a triangle.
	Note that in particular $v$ is not an endpoint of $e$.
	As the edges of $A$ are thick, every $e\in A$ has degree at least $\frac{n}{15}$ in $F$,
        so $\sizeof{E(F)} \geq \frac{n\sizeof{A}}{15}$.
        By the asymmetric version of the K\H{o}vari--S\'{o}s--Tur\'{a}n
        Theorem~\cite{KST54,Znam63},
        this implies the existence of a $K_{t,t}$ as a subgraph of~$F$ provided that
        \[
        (t-1)^{\frac{1}{t}}|A|n^{1-\frac{1}{t}}+(t-1)n < n\frac{\sizeof{A}}{15},
        \]
        which is true whenever $n$ --and therefore $A$-- is large enough.
        Since a copy of $K_{t,t}$ in $F$ gives a copy of $K_{2t, t}^=$ in~$G$,
        the graph~$G$ contains a copy of $H$,
        which yields a contradiction and concludes the proof.
\end{proof}

In Theorem~\ref{thr:RegularErdosStone3}\ref{itm:ii}, there is possibly still room for improvement on the value of $\ex_r(n,H)$.
Based on Theorem~\ref{thr:ex_r_OddCycles}, it is natural to wonder if the value depends on the odd girth.
More precisely, the following question would be worth investigating.

\begin{q}\label{q:dich_girth_ge5}
Let $H$ be a graph with $\chi(H)=3$ such that there exists a vertex $v$ for which $H \backslash v$ is bipartite and such that $H$ is a subgraph of $K_{2\lvert H \rvert, \lvert H \rvert}^=.$
	Let the odd girth of the graph $H$ be $g$.
	It is true that $\ex_r(n,H) = 2 \left \lfloor \frac n{g+2} \right \rfloor+o(n) ?$
\end{q}

	\section{Maximizing clique count given order and size}\label{sec:givenNM}

Given a graph $G$, define $k_t(G)$ to be the number of cliques $K_t$ in $G$.
Gan, Loh and Sudakov~\cite{GLS} proposed the problem of maximizing $k_t(G)$ in $G$ given the order and the maximum degree of $G$.
Motivated by the Gan--Loh--Sudakov problem, Kirsch and Radcliffe~\cite{KR} proposed the problem of maximizing $k_t(G)$ in $G$ given the size and the maximum degree of $G$.
They also wondered about the problem of maximizing $k_t(G)$ in $G$ given the order $n$ {\em and} size $m$, as well as the maximum degree of $G$.
(Up to the maximum degree condition, this question appeared for example also in~\cite{GNPV}.) As will become apparent, the most interesting case here is that of regular graphs. This case is closely related to the Kahn--Zhao theorem~\cite{Kahn01,Zhao10}, which is a natural predecessor to the Gan--Loh--Sudakov problem.
When the order $n$ is not much larger than the degree $r$ of the regular graphs, by focusing on the complementary graph $\overline G$, some cases are related to a conjecture of Kahn~\cite{Kahn01}. This is the case when $2(n-r)\mid n$. For this note that $k_t(G)=i_t( \overline G)$, where $i_t$ is the number of independent sets of order $t$, as every clique in $G$ is an independent set in $\overline G$ and vice versa.

Chase~\cite{C19} solved the main problem in~\cite{GLS}.
Chakraborti and Chen~\cite{CC20} recently solved (in a stronger form) the main conjecture in~\cite{KR}, and due to this the extremal graph for our problem with $n=a(r+1)+b$ (here $b \le r$) and $m \le a \binom{r+1}2 + \binom{b}2$ is the union of $K_{r+1}$s and a colex graph. For the remaining cases, i.e.~in the critical regime (where one cannot have $a$ $K_{r+1}$'s), it is natural to pose the following analogous conjecture.

	\begin{conjecture}\label{conj:maxgivenNandM}
		Let $n=a(r+1)+b$ and $\frac{nr}2 \ge m>a \binom{r+1}2 + \binom{b}2.$
		Any graph maximizing $k_t$ for a fixed $t$ or $k=\sum_{t \ge 2} k_t$ among all graphs of order $n$, size $m$ and maximum degree at most $r$ can be represented as $(a-1)K_{r+1}+H$.

\end{conjecture}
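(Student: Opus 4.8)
The plan is to induct on $a$, peeling off a single clique component at each step. For $a=1$ the asserted representation reads $G=H$ and holds trivially, so suppose $a\ge 2$. The inductive step rests on the following claim, which I regard as the real content: \emph{in the critical regime some extremal graph contains a connected component equal to $K_{r+1}$.} Granting this, fix such a maximiser and write it as $K_{r+1}+G'$, where $G'$ has order $n'=n-(r+1)=(a-1)(r+1)+b$ and size $m'=m-\binom{r+1}{2}$. Because any component on $r+1$ vertices contains at most $\binom{r+1}{t}$ copies of $K_t$, with equality precisely for $K_{r+1}$, extremality of the whole graph forces $G'$ to be extremal for its parameters $(n',m',r)$ (otherwise replacing $G'$ by a better graph and reattaching the clique would beat $G$). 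A one-line computation gives $m'\le n'r/2$ and $m'>(a-1)\binom{r+1}{2}+\binom{b}{2}$, so $(n',m',r)$ again lies in the critical regime, now with parameter $a-1$. The induction hypothesis furnishes a maximiser for these parameters of the form $(a-2)K_{r+1}+H$; prepending one more $K_{r+1}$ component yields an extremal graph of the form $(a-1)K_{r+1}+H$ for the original parameters. The same bookkeeping applies verbatim to $k=\sum_{t\ge2}k_t$, since $K_{r+1}$ simultaneously maximises every $k_t$ on $r+1$ vertices.

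Thus everything reduces to the claim. First I would note that the hypothesis $m>a\binom{r+1}{2}+\binom{b}{2}$ already rules out $a$ disjoint copies of $K_{r+1}$, since those would force at most $a\binom{r+1}{2}+\binom{b}{2}<m$ edges; so $a-1$ clique components is the most one could extract, and it suffices to extract one. To produce it I would run a compression argument: among all extremal graphs select one maximising a secondary potential, such as the number of degree-$r$ vertices and then the number of edges inside a largest clique. The goal is to realise a set $W$ of $r+1$ vertices that induces $K_{r+1}$ and sends no edge outside $W$. If the chosen maximiser does not already display such a set, one performs a local move that completes a densest $(r+1)$-subset to a clique, detaches it, and redistributes the displaced boundary edges among the remaining $n-(r+1)$ vertices. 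This redistribution is always feasible because $m\le nr/2$ keeps the remainder within its own degree capacity once the clique has claimed its share.

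The crux, and the reason this remains a conjecture, is to show that such a move never decreases the clique count. Detaching $W$ destroys every $K_t$ meeting both $W$ and its complement, while completing $W$ internally and reinserting the displaced edges elsewhere creates new copies; one must argue that the gain dominates the loss, and uniformly in $t$ so that both the fixed-$t$ statement and the sum $k$ are covered. The engine for this is the local optimality of cliques for clique counts, a Kruskal--Katona/convexity phenomenon that underpins the resolutions of the Gan--Loh--Sudakov and Kirsch--Radcliffe problems by Chase and by Chakraborti and Chen: the $\binom{r+1}{2}$ edges packed inside $W$ generate the maximum number of each $K_t$ attainable from that many edges on $r+1$ vertices, whereas the cross-boundary cliques that are lost are limited by the degree bound $r$. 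Turning this heuristic into a genuine inequality valid for every $t$ at once is the delicate point; I expect that the cleanest route treats the residual graph via the extremal characterisations of Chase and of Chakraborti--Chen as black boxes rather than re-deriving them, so that only the interface between the extracted clique and the remainder needs a hands-on analysis.
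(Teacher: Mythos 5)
The statement you set out to prove is Conjecture~\ref{conj:maxgivenNandM} of the paper: it is posed there as an open problem, with no proof given, and the conclusion explicitly notes that the main ingredients of~\cite{GLS,C19,CC20} are not enough for the problem in which both order and size are prescribed. Your proposal does not close this gap, and you concede as much: the entire argument funnels into the claim that an extremal graph in the critical regime has a connected component equal to $K_{r+1}$, and you leave unproven the ``crux'' that your detach--complete--redistribute move never decreases the number of cliques. That claim is not an auxiliary lemma on the way to the conjecture: once the routine bookkeeping is checked ($n'=(a-1)(r+1)+b$ and $m'=m-\binom{r+1}{2}$ again lie in the critical regime, and clique counts add over disjoint components), splitting off a single $K_{r+1}$ component is, by exactly the induction you describe, equivalent to the conjecture itself. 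So what you have written is a restatement of the conjecture in inductive form together with a heuristic for its hard step, not a proof.

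There are two further concrete defects. First, a quantifier mismatch: the conjecture asserts that \emph{any} maximiser has the form $(a-1)K_{r+1}+H$, while your key claim is only that \emph{some} extremal graph contains a $K_{r+1}$ component; your induction then exhibits one extremal graph of the desired shape (the inductive hypothesis does apply to $G'$, since $G'$ is a maximiser for $(n',m',r)$), but it says nothing about maximisers that lack such a component. You would need the universal form of the claim --- every maximiser splits off a $K_{r+1}$ --- for the argument to deliver the statement as written. Second, the compression heuristic you propose for the crux runs directly against the evidence assembled in Section~\ref{sec:givenNM}, which is precisely what makes the conjecture delicate: Example~\ref{examp:k3vsk} shows that at $(n,m,r)=(8,18,5)$ the maximisers of $k_3$ and of $k$ are different graphs; Example~\ref{examp:multipleH} exhibits three $k_3$-maximisers with different degree sequences and different values of $k_3(\overline{G})$; and Table~\ref{table:maxk3_nm} shows that the maximum of $k_3$ is not monotone in $m$. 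Any local move that must ``never decrease the clique count, uniformly in $t$'' has to be compatible with all of these phenomena. Your feasibility assertion for redistributing the displaced boundary edges is likewise only a degree-sum count: $m\le nr/2$ guarantees that \emph{some} graph with the remaining parameters exists, not that the reinsertion can be carried out so that the cliques gained inside the completed $(r+1)$-set dominate the cross-boundary cliques destroyed, for every $t$ simultaneously.
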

There are some obstructions to a tidier conjecture.
Examples~\ref{examp:k3vsk} and~\ref{examp:multipleH} show that there might be several different kinds of extremal graph $H$, and for distinct $t$ the extremal graphs might not correspond. This is in stark contrast to the cases of prescribed size or order alone.

\begin{examp}\label{examp:k3vsk}
	The graph $G$ in Figure~\ref{fig:k3max} satisfies $k_3(G)=16, k_4(G)=4, k_5(G)=0$ and $k(G)=20.$ It is the unique graph maximizing $k_3(G)$ among all graphs with $(n,m,r)=(8,18,5).$
	On the other hand, the graph $G$ in Figure~\ref{fig:k5max} satisfies $k_3(G)=15, k_4(G)=6, k_5(G)=1$ and $k(G)=22.$ It is the unique graph maximizing $k(G)$ among all graphs with $(n,m,r)=(8,18,5)$ and maximizes $k_4$ and $k_5$ as well.
	For $k_4$ and $k_5$ there are respectively $2$ and $3$ extremal graphs.	
	
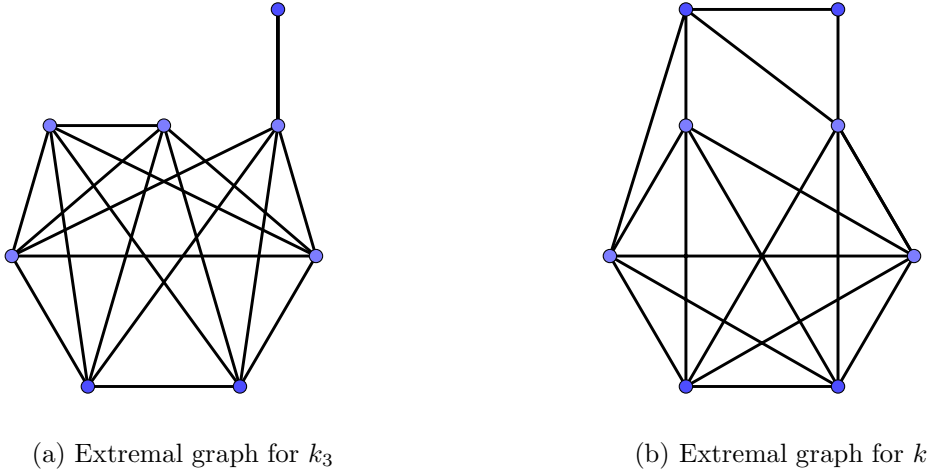
\begin{figure}[h]
	\begin{minipage}[b]{.5\linewidth}
		\begin{center}
			
			\begin{tikzpicture}[line cap=round,line join=round,>=triangle 45,x=1.0cm,y=1.0cm]
			\clip(-0.5,-0.5) rectangle (5,6);
			\draw [line width=1.1pt] (1,0)--(3,0)--(4,1.73)--(0,1.73)--(1,0);
			\draw [line width=1.1pt] (1,0)-- (0.5 ,3.46) -- (3,0);
			\draw [line width=1.1pt] (4,1.73)--(0.5 ,3.46)--(0,1.73);
			\draw [line width=1.1pt] (1,0)-- (2 ,3.46) -- (3,0);
			\draw [line width=1.1pt] (4,1.73)--( 2 ,3.46)--(0,1.73);
			\draw [line width=1.1pt] (1,0)-- ( 3.5,3.46) -- (3,0);
			\draw [line width=1.1pt] (4,1.73)--(3.5 ,3.46)--(0,1.73);			
			\draw [line width=1.1pt]  (2,3.46)--(0.5,3.46);
			\draw [line width=1.1pt]  (3.5,3.46)--(3.5,5)--(3.5,3.46);
			\begin{scriptsize}
			\draw [fill=ududff] (1,0) circle (2.5pt);
			\draw [fill=xdxdff] (0, 1.73) circle (2.5pt);
			\draw [fill=ududff] (3.,0) circle (2.5pt);
			\draw [fill=xdxdff] (4,1.73) circle (2.5pt);
			\draw [fill=xdxdff] (0.5,3.46) circle (2.5pt);
			\draw [fill=xdxdff] (2,3.46) circle (2.5pt);
			\draw [fill=xdxdff] (3.5,3.46) circle (2.5pt);
			\draw [fill=ududff] (3.5,5) circle (2.5pt);
			\end{scriptsize}
		\end{tikzpicture}
		\subcaption{Extremal graph for $k_3$} \label{fig:k3max}
	\end{center}
\end{minipage}
\begin{minipage}[b]{.5\linewidth}
\begin{center}	
	
\begin{tikzpicture}[line cap=round,line join=round,>=triangle 45,x=1.0cm,y=1.0cm]
\clip(-0.5,-0.5) rectangle (5,6);
\draw [line width=1.1pt] (1,0)--(3,0)--(4,1.73)--(1,3.46)-- (0,1.73)--(1,0)--(4,1.73)--(0,1.73)--(3,0)--(1,3.46)--(1,0) ;
\draw [line width=1.1pt] (3,3.46)--(3.,5)-- (1,5)--(3,3.46);
\draw [line width=1.1pt] (4,1.73)--(3,3.46)-- (3,0);
\draw [line width=1.1pt] (1,0)-- (3,3.46)--(4,1.73);
\draw [line width=1.1pt] (1,3.46)-- (1,5)--(0,1.73);
\begin{scriptsize}
\draw [fill=ududff] (1,0) circle (2.5pt);
\draw [fill=xdxdff] (0, 1.73) circle (2.5pt);
\draw [fill=ududff] (3.,0) circle (2.5pt);
\draw [fill=xdxdff] (4,1.73) circle (2.5pt);
\draw [fill=xdxdff] (1,3.46) circle (2.5pt);
\draw [fill=xdxdff] (3,3.46) circle (2.5pt);
\draw [fill=ududff] (1,5) circle (2.5pt);
\draw [fill=ududff] (3,5) circle (2.5pt);
\end{scriptsize}
\end{tikzpicture}
\subcaption{Extremal graph for $k$} \label{fig:k5max}
\end{center}
\end{minipage}	
\caption{Graphs with $(n,m,r)=(8,18,5)$}
\label{fig:weirdexamples}
\end{figure}
	
\end{examp}

As the $t=3$ case was the main interest in~\cite{KR}, we can further focus on this case.

The following equality expresses $k_3(G)$ in terms of its order, the degrees and $k_3( \overline{G}).$ It is basically proven in~\cite{Good}.
\begin{claim}\label{claim:good}
	For any graph $G$ of order $n$, we have
	$$k_3(G)+k_3( \overline{G})+\frac 12 \sum_v \deg(v)(n-1-\deg(v)) = \binom{n}{3}.$$
\end{claim}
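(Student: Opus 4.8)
The plan is to prove the identity by a single double-counting argument applied to the $\binom{n}{3}$ triples of vertices, organised according to how the three pairs inside each triple split between edges and non-edges of $G$. First I would partition the triples. Each triple $\{u,v,w\}$ induces three pairs, and each pair is either an edge of $G$ or an edge of $\overline{G}$. The triple is counted by $k_3(G)$ exactly when all three pairs are edges of $G$, and by $k_3(\overline{G})$ exactly when all three are non-edges of $G$. Calling every remaining triple \emph{mixed} (so it has at least one edge and at least one non-edge of $G$ among its pairs), I get immediately
$$\binom{n}{3} = k_3(G) + k_3(\overline{G}) + M,$$
where $M$ denotes the number of mixed triples. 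It then suffices to establish that $M = \frac{1}{2}\sum_v \deg(v)\bigl(n-1-\deg(v)\bigr)$.

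To this end I would count \emph{bichromatic angles}, by which I mean pairs $\bigl(v,\{a,b\}\bigr)$ consisting of an apex vertex $v$ and an unordered pair of distinct other vertices such that exactly one of $va$, $vb$ is an edge of $G$. Fixing the apex $v$, such an angle requires choosing one neighbour and one non-neighbour of $v$, so the number of bichromatic angles with apex $v$ is exactly $\deg(v)\bigl(n-1-\deg(v)\bigr)$. Summing over $v$, the total number of bichromatic angles equals $\sum_v \deg(v)\bigl(n-1-\deg(v)\bigr)$.

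Next I would count the same quantity by triples: every bichromatic angle $\bigl(v,\{a,b\}\bigr)$ is contained in the triple $\{v,a,b\}$, which is necessarily mixed, so I only need the per-triple contribution. The hard part — really the only point that needs care — is this local case check on a single triple. A monochromatic triple contributes no bichromatic angle, since at each of its three apexes the two incident pairs share a colour. A mixed triple has its three pairs coloured in a two-and-one pattern; the apex whose two incident pairs are the two majority-coloured pairs contributes no angle, while each of the two endpoints of the minority-coloured pair is the apex of exactly one bichromatic angle. Hence every mixed triple contributes exactly $2$ bichromatic angles.

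Combining the two counts yields $\sum_v \deg(v)\bigl(n-1-\deg(v)\bigr) = 2M$, and substituting into the partition identity above gives
$$k_3(G) + k_3(\overline{G}) + \frac{1}{2}\sum_v \deg(v)\bigl(n-1-\deg(v)\bigr) = \binom{n}{3},$$
as claimed. I expect no genuine obstacle here beyond stating the local two-and-one analysis cleanly; everything else is bookkeeping.
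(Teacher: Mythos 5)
Your proof is correct and complete. The paper itself gives no argument for this claim, deferring instead to Goodman's paper; your double count of mixed triples via bichromatic angles (the two-and-one case analysis giving exactly $2$ angles per mixed triple) is precisely the classical argument behind Goodman's formula, so it coincides with the proof the paper is implicitly invoking.
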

	
Describing the extremal graphs in general seems to be hard as they are not unique and also $k_3( \overline G )$ and the degree sequences can be different for different extremal graphs, as the next example shows.

\begin{examp}\label{examp:multipleH}
	There are three graphs with the maximum number of triangles, $16$, among all graphs of order $8$, size $17$ and maximum degree at most $5$. 
	The number of triangles in their complement $\overline G$ is equal to $4, 1$ and $0$ respectively, implying also that their degree sequences are different.
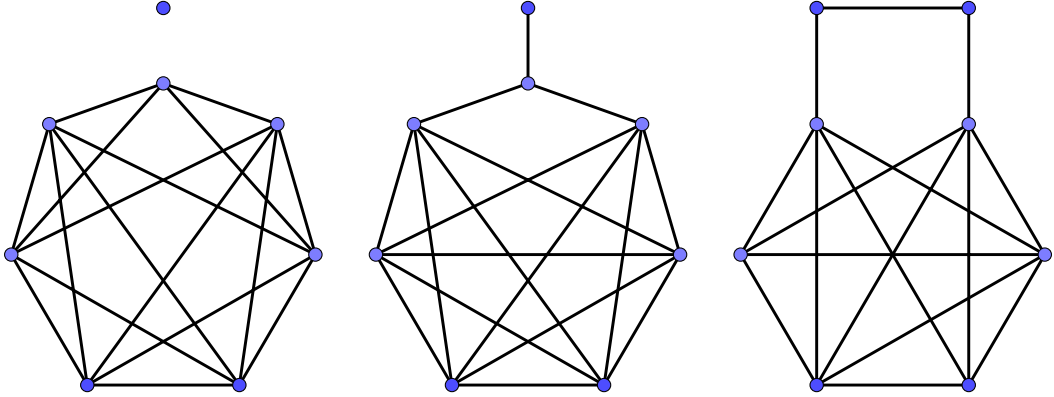
\begin{figure}[h]
	\begin{minipage}[b]{.3\linewidth}
		\begin{center}
			
			\begin{tikzpicture}[line cap=round,line join=round,>=triangle 45,x=1.0cm,y=1.0cm]
			\clip(-0.5,-0.5) rectangle (5,6);
			
			\node (1) at (1,0) {};
			\node (2) at (3.,0) {};
			\node (3) at (4,1.73) {};
			\node (4) at (3.5,3.46){};
			\node (5) at (2,4){};
			\node (6) at (0.5,3.46){};
			\node (7) at (0, 1.73){};
			\draw [line width=1.1pt] (1)--(3,0)--(4,1.73)--(3.5,3.46)--(2,4)--(0.5,3.46)--(0,1.73)--(1,0);
			\draw [line width=1.1pt] (6)--(1)--(3)--(5)--(7)--(2)--(4);
			\draw [line width=1.1pt] (1)--(4)--(7);
			\draw [line width=1.1pt] (3)--(6)--(2);
			\begin{scriptsize}
			\draw [fill=ududff] (1,0) circle (2.5pt);
			\draw [fill=xdxdff] (0, 1.73) circle (2.5pt);
			\draw [fill=ududff] (3.,0) circle (2.5pt);
			\draw [fill=xdxdff] (4,1.73) circle (2.5pt);
			\draw [fill=xdxdff] (0.5,3.46) circle (2.5pt);
			\draw [fill=xdxdff] (2,4) circle (2.5pt);
			\draw [fill=xdxdff] (3.5,3.46) circle (2.5pt);
			\draw [fill=ududff] (2,5) circle (2.5pt);
			\end{scriptsize}
			\end{tikzpicture}
		\end{center}
	\end{minipage}
\begin{minipage}[b]{.3\linewidth}
	\begin{center}
		
		\begin{tikzpicture}[line cap=round,line join=round,>=triangle 45,x=1.0cm,y=1.0cm]
		\clip(-0.5,-0.5) rectangle (5,6);
			\node (1) at (1,0) {};
			\node (2) at (3.,0) {};
			\node (3) at (4,1.73) {};
			\node (4) at (3.5,3.46){};
			\node (5) at (2,4){};
			\node (6) at (0.5,3.46){};
			\node (7) at (0, 1.73){};
			\node (8) at (2,5){};
			\draw [line width=1.1pt] (1)--(3,0)--(4,1.73)--(3.5,3.46)--(2,4)--(0.5,3.46)--(0,1.73)--(1,0);
			\draw [line width=1.1pt] (7)--(2)--(4);
			\draw [line width=1.1pt] (6)--(1)--(3);
			\draw [line width=1.1pt] (1)--(4)--(7)--(3)--(6)--(2);

			\draw [line width=1.1pt] (5)--(8);
			\begin{scriptsize}
			\draw [fill=ududff] (1,0) circle (2.5pt);
			\draw [fill=xdxdff] (0, 1.73) circle (2.5pt);
			\draw [fill=ududff] (3.,0) circle (2.5pt);
			\draw [fill=xdxdff] (4,1.73) circle (2.5pt);
			\draw [fill=xdxdff] (0.5,3.46) circle (2.5pt);
			\draw [fill=xdxdff] (2,4) circle (2.5pt);
			\draw [fill=xdxdff] (3.5,3.46) circle (2.5pt);
			\draw [fill=ududff] (2,5) circle (2.5pt);
		\end{scriptsize}
		\end{tikzpicture}
	\end{center}
\end{minipage}
	\begin{minipage}[b]{.3\linewidth}
		\begin{center}	
			
			\begin{tikzpicture}[line cap=round,line join=round,>=triangle 45,x=1.0cm,y=1.0cm]
			\clip(-0.5,-0.5) rectangle (5,6);
			
			\node (1) at (1,0) {};
			\node (2) at (3.,0) {};
			\node (3) at (4,1.73) {};
			\node (4) at (3,3.46){};
			\node (5) at (1,3.46){};
			\node (6) at (0,1.73){};
			\node (7) at (3, 5){};
			\node (8) at (1,5){};
			
			\draw [line width=1.1pt] (1)--(2)--(3)--(4);
			\draw [line width=1.1pt] (5)--(6)--(1)--(3)--(5)--(1)--(4);
			\draw [line width=1.1pt] (2)--(4)--(6)--(3);
			\draw [line width=1.1pt] (2)--(5)--(8)--(7)--(4);
			\begin{scriptsize}
			\draw [fill=ududff] (1,0) circle (2.5pt);
			\draw [fill=xdxdff] (0, 1.73) circle (2.5pt);
			\draw [fill=ududff] (3.,0) circle (2.5pt);
			\draw [fill=xdxdff] (4,1.73) circle (2.5pt);
			\draw [fill=xdxdff] (1,3.46) circle (2.5pt);
			\draw [fill=xdxdff] (3,3.46) circle (2.5pt);
			\draw [fill=ududff] (1,5) circle (2.5pt);
			\draw [fill=ududff] (3,5) circle (2.5pt);
			\end{scriptsize}
			\end{tikzpicture}
		\end{center}
	\end{minipage}	
	\caption{Graphs with $(n,m,r)=(8,17,5)$ maximizing $k_3$}
	\label{fig:weirdexamples2}
\end{figure}
\end{examp}

We also remark that in the critical regime, increasing $m$ can imply both a decrease or increase in the number of triangles. This is also the case if one increases both $m$ and $n$ by $1$.

\begin{examp}
	When $r=4$, the maximum number of triangles among all graphs of order $n$ and size $m$ in the critical regime are given below in Table~\ref{table:maxk3_nm}.
	
		\begin{table}[h]
			\begin{center}
			\begin{tabular}{ |c | c  c c c c c  | }
				\hline			
				$n \backslash m$ & 11 & 12 & 13&14&15 &16 \\
				\hline
				6 & 7&8&&&& \\
				7 &  & 8&7&7&& \\
				8 &  & & &8&8&8 \\
				\hline  
			\end{tabular}
			\end{center}
			\caption{maximum $k_3(G)$ given $n$ and $m$ when $r=4$}\label{table:maxk3_nm}
		\end{table}
\end{examp}

We also give some positive results, e.g.~we can describe the extremal graphs for $n=r+2.$

\begin{prop}
	When $n=r+2$ and $\binom{r+1}2 \le m \le \frac{(r+2)r}2$, the extremal graph $G$ attaining the maximum number of triangles among all graphs of order $n$, size $m$ and maximum degree at most $r$ is the one for which $ \overline G$ is the union of a matching of size $m-\binom{r+1}2$ and a star of order $(r+1)^2+1-2m$.
\end{prop}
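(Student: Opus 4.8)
The plan is to pass to the complement and apply Goodman's identity (Claim~\ref{claim:good}). Set $H=\overline G$, write $\bar m=\binom{n}{2}-m$ for its number of edges and $\bar d_v=\deg_H(v)$ for its degrees. Since $n=r+2$, the condition $\deg_G(v)\le r=n-2$ is equivalent to $\bar d_v\ge 1$ for all $v$, i.e.\ $H$ has no isolated vertex. Using $\deg_G(v)(n-1-\deg_G(v))=\bar d_v(n-1-\bar d_v)$ together with the identity $\frac12\sum_v\bar d_v(n-1-\bar d_v)=(n-2)\bar m-\sum_v\binom{\bar d_v}{2}$, Claim~\ref{claim:good} rearranges to
\[
k_3(G)=\binom{n}{3}-(n-2)\bar m+\Bigl(\sum_v\binom{\bar d_v}{2}-k_3(H)\Bigr).
\]
As $n$ and $m$ are fixed, maximizing $k_3(G)$ is the same as maximizing $\Phi(H):=\sum_v\binom{\bar d_v}{2}-k_3(H)$ over all graphs $H$ on $n$ vertices, with $\bar m$ edges and minimum degree at least $1$.

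I would then bound $\Phi$ by two inequalities designed to be simultaneously tight. First, $k_3(H)\ge 0$ gives $\Phi(H)\le\sum_v\binom{\bar d_v}{2}$. Second, $\sum_v\binom{\bar d_v}{2}$ is a Schur-convex function of the degree sequence, so subject to $\sum_v\bar d_v=2\bar m$ and $1\le\bar d_v\le n-1$ it is maximized at the most concentrated admissible sequence. As any $n-1$ of the degrees contribute at least $n-1$, a single degree is at most $D:=2\bar m-n+1$, and $(D,1,\dots,1)$ dominates every admissible sequence in each partial sum, hence majorizes it. Here $D\le n-1$ holds precisely when $\bar m\le n-1$, that is when $m\ge\binom{r+1}{2}$, the left end of the stated range; so $(D,1,\dots,1)$ is itself admissible and
\[
\Phi(H)\le\sum_v\binom{\bar d_v}{2}\le\binom{D}{2}=\binom{(r+1)^2-2m}{2}.
\]

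Finally I would verify that the proposed graph meets this bound and is the unique optimum. Its complement is the disjoint union of a star of order $(r+1)^2+1-2m$---whose centre then has degree $D=(r+1)^2-2m$---and a matching of size $m-\binom{r+1}{2}$; a short count confirms that these occupy exactly $n=r+2$ vertices and $\bar m$ edges, that the degree sequence is exactly $(D,1,\dots,1)$, and that the graph is triangle-free. Thus $k_3(H)=0$ and $\sum_v\binom{\bar d_v}{2}=\binom{D}{2}$, so $\Phi(H)=\binom{D}{2}$ and the graph is extremal. For uniqueness, equality in both displays forces $k_3(H)=0$ together with degree sequence $(D,1,\dots,1)$: the lone vertex of degree $D$ is joined to $D$ of the degree-$1$ vertices, and the remaining $2\bigl(m-\binom{r+1}{2}\bigr)$ vertices of degree $1$ must be paired by a matching, which determines $H$ up to isomorphism.

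The substance of the argument is the opening reduction: once $k_3(G)$ is rewritten through the complement, the task cleanly separates into the degree-sequence extremal problem for $\sum_v\binom{\bar d_v}{2}$ and the independent demand that the extremizer be triangle-free. The only delicate point is that these two optima coincide---that the maximally concentrated degree sequence is realizable by a triangle-free graph, with feasibility $D\le n-1$ guaranteed exactly by $m\ge\binom{r+1}{2}$---after which triangles can only lower $\Phi$, so no competitor containing a triangle can win.
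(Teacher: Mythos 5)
Your proof is correct and takes essentially the same route as the paper's: both combine Goodman's identity (Claim~\ref{claim:good}) with a majorization argument on the degree sequence --- the paper applies Karamata's inequality to the concave function $x(n-1-x)$ on the degrees of $G$, which is exactly your Schur-convexity bound on $\sum_v\binom{\bar d_v}{2}$ rewritten in complement coordinates --- together with the trivial bound $k_3(\overline G)\ge 0$. Your version is somewhat more explicit about feasibility of the extreme sequence $(D,1,\dots,1)$ and about uniqueness, but the substance of the argument is identical.
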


\begin{proof}
	Note that the degrees $0 \le d_i \le r$ satisfy $\sum d_i =2m$ and the function $f(x)=x(n-1-x)$ is a strictly concave function. By the inequality of Karamata~\cite{Kar}, this implies that $\sum f(d_i) \ge (r+1)f(r)+f(2m-r(r+1)).$
	By Claim~\ref{claim:good} this implies that $k_3(G) \le \binom{n}3 - (r+1)f(r)-f(2m-r(r+1))$. Equality occurs if $k_3( \overline G)=0$ and $\overline G$ has $r+1$ vertices of degree $1$ and one vertex of degree $(r+1)^2-2m$, from which the characterization follows.	
\end{proof}

For $n=r+3$ one can get a similar characterization, up to a few exceptions, if $m \ge  \frac{(r+2)r}2$. In that case, the complement $\overline G$ is in general a union of cycles, some of them having one vertex in common.

The case where $m=\frac{nr}2$, i.e.~when the graphs are regular, might be considered as the most interesting case because it is the extreme case which is most far apart from the edge case. Due to Conjecture~\ref{conj:maxgivenNandM} we focus on this case for $r+2 \le n \le 2r+1.$
\begin{thr}
	Let $r+2 \le n \le 2r+1$ and $m=\frac{nr}2$. Every graph $G$ maximizing the number of triangles among the graphs of order $n$ and size $m$ can be formed by taking the complement of a $(n-r-1)$-regular graph on $n$ vertices minimizing the number of triangles.
	In particular, if $n$ is even or $n \le r +1+ 2 \lfloor \frac r3 \rfloor $ is odd and the maximum number of triangles equals $k_3(G)=\binom{n}{3}-\frac n2 r(n-1-r) $. 
\end{thr}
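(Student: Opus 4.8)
The plan is to turn the maximization of $k_3(G)$ into a minimization of $k_3(\overline{G})$ over regular graphs, using Goodman's identity, and then to decide when this minimum vanishes via the regular Mantel theorem. First I would record that in the ambient setting of Conjecture~\ref{conj:maxgivenNandM} the maximum degree is at most $r$, so $\sum_v\deg(v)=2m=nr$ together with $\deg(v)\le r$ forces $\deg(v)=r$ for every $v$; thus $G$ is $r$-regular and $\overline{G}$ is $(n-1-r)$-regular (a genuine regular graph since $1\le n-1-r\le r$ when $r+2\le n\le 2r+1$). Applying Claim~\ref{claim:good} to the $r$-regular graph $G$ collapses the middle term to the constant $\frac12\sum_v r(n-1-r)=\frac n2 r(n-1-r)$, whence
\[
k_3(G)=\binom{n}{3}-\frac n2 r(n-1-r)-k_3(\overline{G}).
\]
As $G\mapsto\overline{G}$ is a bijection between the $r$-regular and the $(n-1-r)$-regular graphs on the vertex set, and the first two terms are fixed, $G$ maximizes $k_3$ if and only if $\overline{G}$ minimizes $k_3$ among all $(n-1-r)$-regular graphs. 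This is the first assertion.

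For the ``in particular'' claim, set $s=n-1-r$; the displayed value of $k_3(G)$ is attained exactly when $\min k_3(\overline{G})=0$, i.e.\ when some triangle-free $s$-regular graph on $n$ vertices exists. If $n$ is even then $n\le 2r$ gives $s\le \frac n2=\ex_r(n,K_3)$, and a union of $s$ perfect matchings of $K_{n/2,n/2}$ supplies such a graph. If $n$ is odd then $r$ is even (else no $r$-regular graph on an odd number of vertices exists) and $s=2p$ is even; I would translate the hypothesis $n\le r+1+2\lfloor r/3\rfloor$ into $p\le\lfloor r/3\rfloor$, so that $n=r+1+2p\ge 5p+1$ and hence $p\le\lfloor n/5\rfloor$, giving $s=2p\le 2\lfloor n/5\rfloor=\ex_r(n,K_3)$ by Theorem~\ref{thr:regularMantel}. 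In either case $s$ does not exceed $\ex_r(n,K_3)$, and a triangle-free $s$-regular graph exists, so $k_3(\overline{G})$ can be driven to $0$.

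The step I expect to be the main obstacle is the existence of a triangle-free $s$-regular graph at every admissible \emph{sub-extremal} even degree, since Theorem~\ref{thr:regularMantel} only furnishes one at the maximum degree $2\lfloor n/5\rfloor$ and, $n$ being odd, one cannot simply delete a perfect matching to lower the degree. I would resolve this by Petersen's $2$-factor theorem: the extremal triangle-free $2\lfloor n/5\rfloor$-regular graph decomposes into $\lfloor n/5\rfloor$ edge-disjoint $2$-factors, and deleting $\lfloor n/5\rfloor-p$ of them leaves a triangle-free $2p$-regular subgraph, exactly the required graph. Finally I would remark that the condition $n\le r+1+2\lfloor r/3\rfloor$ is only sufficient, not necessary (for instance $(n,r)=(15,8)$ already admits a triangle-free $6$-regular graph, the balanced blow-up of $C_5$), which is consistent with the one-directional phrasing of the statement.
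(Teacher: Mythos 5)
Your proposal is correct and follows essentially the same route as the paper: force $G$ to be $r$-regular, apply Claim~\ref{claim:good} to reduce maximizing $k_3(G)$ to minimizing $k_3(\overline{G})$ over $(n-1-r)$-regular graphs, and then use Theorem~\ref{thr:regularMantel} to decide when that minimum is zero. The only difference is that you make explicit two details the paper's terse proof glosses over --- namely that the degree bound plus $m=\frac{nr}{2}$ forces regularity, and that for odd $n$ a triangle-free regular graph exists at every admissible even degree below $2\lfloor\frac{n}{5}\rfloor$ (your Petersen $2$-factor argument), not just at the extremal degree --- and your closing observation that the hypothesis $n\le r+1+2\lfloor\frac{r}{3}\rfloor$ is sufficient but not necessary is likewise accurate.
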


\begin{proof}
	By Claim~\ref{claim:good} we know $k_3(G)=\binom{n}{3}-\frac n2 r(n-1-r) -k_3( \overline{G}).$
	So the maximum is attained if the $(n-r-1)$-regular graph on $n$ vertices $\overline{G}$ minimizes $k_3$.
	So Theorem~\ref{thr:regularMantel} implies the exact result for $n$ being even (take $\overline G$ bipartite) or $n$ being odd and $n \le r+1 + 2 \lfloor \frac r3 \rfloor $. Section~\ref{sec:satRegMantel} implies $k_3(G)=\binom{n}{3}-\frac n2 r(n-1-r)-\Theta(n^2)$ in the remaining case. 
\end{proof}

	
The exact result for the regular case would be known once proven Conjecture~\ref{conj:maxgivenNandM} and the following conjecture.
	
\begin{conjecture}\label{conj:exactSat}
	Let $G$ be a $k$-regular graph on $n$ vertices, with $n=2p+1$ being odd and $2 \lfloor \frac n5 \rfloor < k=p-q \le 2 \lfloor \frac n4 \rfloor$ being even.
	Then $k_3(G) \ge \frac{k}2 \left( \frac{k}2-q-1 \right).$
\end{conjecture}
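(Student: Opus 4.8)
The plan is to first confirm that the conjectured value is the right target, and then to attack the lower bound through a stability argument that pins down the structure of a triangle-minimizer.

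The first thing to record is that the bound is sharp, and in a way that tells us exactly what structure to force on a minimizer. Take the $k$-regular graph built in the proof of Theorem~\ref{thr:SaturationRegMantel}: a $K_{p,p}$ with parts $A,B$, minus $q$ disjoint perfect matchings and one further matching $M=\{a_1b_1,\dots,a_{k/2}b_{k/2}\}$, together with an extra vertex $v$ joined to the $k$ endvertices of $M$. If the $q$ deleted matchings are arranged so that, among those $k$ endvertices, each $a_i$ loses exactly $q$ of its partners $b_j$ with $j\neq i$, then every triangle runs through $v$ and corresponds to a surviving edge $a_ib_j$, of which there are $\frac k2\bigl(\frac k2-1-q\bigr)=\frac k2\bigl(\frac k2-q-1\bigr)$. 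So the conjecture is really the assertion that this single-special-vertex construction is optimal, with $v$ splitting its $k$ neighbours evenly between the two sides of a bipartition.

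The engine of the lower bound should be a parity identity. For any bipartition $V(G)=A\cup B$ with $|A|\le|B|$, double counting degrees gives $e_B-e_A=\frac{k}{2}(|B|-|A|)$, where $e_A,e_B$ count the edges inside the parts; since $n$ is odd the difference $|B|-|A|$ is odd, so $e_B\ge e_A+\frac k2\ge \frac k2$, and $\frac{k}{2}(|B|-|A|)$ is an integer because $k$ is even. Thus the larger side of any cut (in particular a maximum cut, which minimises $e_A+e_B$) carries at least $\frac k2$ internal edges, and every triangle contains at least one internal edge, since a triangle cannot be properly $2$-coloured. The aim is to show that these $\ge\frac k2$ internal edges are collectively responsible for at least $\frac k2\bigl(\frac k2-q-1\bigr)$ triangles. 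To convert this into the exact constant I would prove a \emph{stability} statement: a $k$-regular triangle-minimizer in this range concentrates all its non-bipartiteness at one vertex, i.e.\ $G\setminus v$ is bipartite for some $v$. The natural skeleton to start from is the one already developed for Theorem~\ref{thr:quadraticsaturationK3} --- the set $S$ of vertices lying in many triangles, the facts that $G\setminus S$ is bipartite with parts $A,B$ and that the odd edges cluster near $S$ --- combined with an exchange/local-switching argument showing that if $|S|\ge 2$ (more precisely, if the non-bipartiteness is carried by two separable special neighbourhoods) then a rewiring preserving $k$-regularity strictly decreases the triangle count. Once $G\setminus v$ is bipartite with parts $A,B$ and $v$ has $\alpha$ neighbours in $A$ and $\beta=k-\alpha$ in $B$, every triangle passes through $v$ and corresponds to an edge of $G$ between $N(v)\cap A$ and $N(v)\cap B$; the regularity constraints (each neighbour of $v$ sends $k-1$ edges into the bipartite part, and the part sizes are forced by $q$) then reduce the problem to minimising the number of such cross-edges, a clean convexity/defect-of-Hall optimisation whose minimum is $\frac k2\bigl(\frac k2-q-1\bigr)$, attained at the balanced split $\alpha=\beta=\frac k2$.

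The hard part is the concentration step, and it is exactly what keeps the statement a conjecture. The analysis behind Theorem~\ref{thr:quadraticsaturationK3} only controls $|S|$ through the crude bound $|S|<\frac{n}{10}$ and discards large multiplicative constants, so it is nowhere near strong enough to force $|S|=1$; upgrading ``few triangles'' to ``a single special vertex'' with no loss in the constant is the crux. A more promising route may be a robust exchange argument applied directly to the $\ge\frac k2$ internal edges of a maximum cut, showing that any deviation from the star-like configuration around a single vertex either produces an additional internal edge --- and hence, via the parity identity, additional triangles --- or violates $k$-regularity. Along the way one must also handle monochromatic triangles (all three vertices on one side of the cut), the possibility that $e_A>0$, and the boundary regimes $k\to 2\lfloor\frac n5\rfloor$ and $k\to 2\lfloor\frac n4\rfloor$ separately, since the feasibility of the extremal matchings degenerates there.
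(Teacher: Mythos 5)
Your proposal is a program rather than a proof, and you say so yourself: the ``concentration'' step is precisely where it stops, and that step is not a technical detail --- it is the entire content of the statement, which the paper itself leaves as a conjecture. Before addressing the gap, let me confirm what is right. The sharpness construction and its triangle count agree with the equality description given after the conjecture. The parity identity $e_B-e_A=\frac{k}{2}(\lvert B\rvert-\lvert A\rvert)\ge \frac k2$ is correct. And the endgame, \emph{granting} that a minimizer has a vertex $v$ with $G\setminus v$ bipartite, is correct and in fact simpler than you make it: writing $A,B$ for the parts and $\alpha,\beta$ for $\lvert N(v)\cap A\rvert,\lvert N(v)\cap B\rvert$, regularity gives $k(\lvert A\rvert-\lvert B\rvert)=\alpha-\beta$, which together with $\lvert A\rvert+\lvert B\rvert=2p$ and $\lvert\alpha-\beta\rvert\le k$ forces $\lvert A\rvert=\lvert B\rvert=p$ and $\alpha=\beta=\frac k2$; then every $x\in N(v)\cap A$ has $k-1$ neighbours in $B$, hence at least $(k-1)-(p-\frac k2)=\frac k2-q-1$ neighbours in $N(v)\cap B$, and summing over $x$ gives at least $\frac k2\left(\frac k2-q-1\right)$ triangles through $v$. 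No Hall-type or convexity optimisation is needed.

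The genuine gap is the middle step, and there is no route to it from the ingredients you cite. The machinery of Theorem~\ref{thr:quadraticsaturationK3} loses constant factors at every stage ($\lvert S\rvert<\frac n{10}$, edges in at least $\frac n5$ triangles, final constant $\frac1{300}$, versus the conjectured truth of roughly $\frac{n^2}{50}$ at the bottom of the range), so it cannot be ``upgraded'' to an exact constant; a new idea is required, not a sharper bookkeeping of the old one. Moreover, the structural statement you aim for (every triangle-minimizer has all its odd cycles through a single vertex) is strictly stronger than the conjecture and is itself open; in particular, for $k$ just above $2\floor{\frac n5}$ one must rule out perturbed blow-ups of $C_5$, which are far from ``bipartite plus one vertex,'' and your charging scheme breaks exactly there: in such graphs a maximum cut has far more than $\frac k2$ internal edges, each lying in very few triangles, so the parity count of internal edges does not by itself force triangles, and the edge-by-edge claim that each internal edge carries $\frac k2-q-1$ triangles is false in general (it is tight only in the extremal example). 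For calibration, note how the paper handles the one case it can do, $q=0$ (i.e.\ $n=2k+1$): it takes a vertex $v$ minimizing $k_3(v)$, counts the $x$ edges inside $N(v)$ and the $x+\frac k2$ edges inside $V\setminus N[v]$, and lower-bounds the number of triangles each such edge lies in, getting $\frac k2\left(\frac k2-1\right)+x(k-2x-3)$ --- a purely local argument with no stability claim anywhere. Extending that local count to $q\ge 1$ (where the analogous estimates degrade by $q$ per edge) is a more plausible next step than proving concentration.
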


Equality for this conjecture holds when $G$ is a graph formed by a $K_{p,p}$ and an additional vertex $v$ connected to $\frac{k}2$ vertices of both stable sets of $K_{p,p}$ and deleting edges between the $k$ neighbours of $v$ on the one hand and between the $2p-k$ nonneighbours of $v$ on the other hand, such that the final graph is $k$-regular.

We prove one case of this conjecture.

\begin{prop}
	Let $G$ be a $k$-regular graph on $n=2k+1$ vertices, with $k$ being even.
	Then $k_3(G) \ge \frac{k}2 \left( \frac{k}2-1 \right)$. Equality holds if and only if $G$ is a $K_{k,k}$ minus a matching of size $\frac k2$, where all endvertices of the matching are connected with an additional vertex.
\end{prop}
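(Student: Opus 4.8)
The plan is to reduce the statement to finding a single vertex with a dense neighbourhood, and then to locate such a vertex through a maximum-cut analysis. Since $n=2k+1$ is odd, a $k$-regular graph cannot be bipartite, and its complement $\overline G$ is again $k$-regular on $2k+1$ vertices; Claim~\ref{claim:good} specialises to $k_3(G)+k_3(\overline G)=\tfrac16 k(2k+1)(k-2)$. The elementary fact I would record first is a per-vertex identity: for every $v$, writing $N(v)$ and $\overline N(v)=V\setminus(\{v\}\cup N(v))$ (both of size $k$) and double counting the edges meeting $N(v)$ versus those meeting $\overline N(v)$, one gets
\[
e(\overline N(v))-e(N(v))=\tfrac k2 .
\]
Since $e(N(v))$ is the number of triangles through $v$ and $k_3(G)\ge e(N(v))$ for every single $v$, it suffices to exhibit one vertex with $e(N(v))\ge \frac{k(k-2)}4$, equivalently $e(\overline N(v))\ge \frac{k^2}4$.

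Next I would fix a maximum-cut bipartition $A\cup B=V$, so every vertex has at most $\tfrac k2$ neighbours inside its own part; write $a=\sizeof A\le k< b=\sizeof B$ (the parts are unequal as $n$ is odd) and $f_A=e(G[A])$, $f_B=e(G[B])$. Counting the $A$–$B$ edges from both sides, $ak-2f_A=e(A,B)=bk-2f_B$, which yields the crucial identity $f_B-f_A=\tfrac12(b-a)k\ge \tfrac k2$; in particular $f_B\ge \tfrac k2$. Partitioning triangles by how many vertices they place in $A$ gives the exact decomposition $k_3(G)=k_3(G[A])+k_3(G[B])+T_{1,A}+T_{1,B}$, where $T_{1,B}$ counts triangles with exactly one vertex in $A$. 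For those,
\[
T_{1,B}=\sum_{pq\in E(G[B])}\sizeof{N_A(p)\cap N_A(q)}\ \ge\ \sum_{pq\in E(G[B])}\bigl((b-1)-\deg_B(p)-\deg_B(q)\bigr)=(b-1)f_B-\sum_{p\in B}\deg_B(p)^2,
\]
every summand being nonnegative because $\deg_B(\cdot)\le\tfrac k2$ and $b-1\ge k$.

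The principal case, matching the extremal graph, is $a=k$, $b=k+1$, $f_B=\tfrac k2$ with $A$ independent. Here I would feed in the convexity bound $\sum_{p\in B}\deg_B(p)^2\le f_B^2+f_B$ (valid for any graph with $f_B$ edges, with equality exactly for a star or a triangle, via $\sum\binom{\deg}{2}\le\binom{f_B}{2}$), giving $T_{1,B}\ge k\cdot\tfrac k2-\bigl(\tfrac{k^2}4+\tfrac k2\bigr)=\frac{k(k-2)}4$, precisely the claimed bound. Tracing equality back: the star bound forces the defect edges of $B$ to share a common vertex $v$, the tightness of $\sizeof{N_A(p)\cap N_A(q)}\ge\deg_A(p)+\deg_A(q)-a$ forces $A$ to be covered by those neighbourhoods, and regularity then pins the graph down to $K_{k,k}$ minus a matching of size $\tfrac k2$ with the matched endpoints joined to $v$.

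The hard part is the remaining case, where $G[B]$ is locally dense—say it contains a large complete bipartite piece. Then the estimate above can collapse to zero (the $(1,2)$-triangles genuinely vanish), and the triangles instead live inside $B$ or appear as $(2,1)$-triangles. The obstacle is therefore to bound all four terms of the decomposition simultaneously: one must offset the loss in $\sum_{p\in B}\deg_B(p)^2$ against $k_3(G[B])$ (through a Moon–Moser/Goodman-type lower bound for the subgraph $G[B]$) and against $T_{1,A}$, whose size is governed by $f_A=f_B-\tfrac12(b-a)k$. I expect the crux to be showing that the minimum of the resulting expression, taken over all admissible $(a,f_A,f_B)$ and degree sequences subject to the identity $f_B-f_A=\tfrac12(b-a)k$ and to graphicality, is exactly $\frac{k(k-2)}4$ and is attained only at the star configuration; this constrained optimisation, rather than any single inequality, is where the real work lies.
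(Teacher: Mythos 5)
Your setup is sound---the per-vertex identity $e(\overline N(v))-e(N(v))=k/2$, the cut identity $f_B-f_A=\tfrac12(b-a)k$, and the lower bound $T_{1,B}\ge (b-1)f_B-\sum_{p\in B}\deg_B(p)^2$ are all correct, and your computation does give $\tfrac{k}{2}(\tfrac{k}{2}-1)$ in the principal case. But the proof has a genuine gap, which you acknowledge yourself: everything outside the case $a=k$, $A$ independent, $f_B=k/2$ is deferred to a ``constrained optimisation \ldots where the real work lies'', and that optimisation is the actual substance of the proposition, not a routine check. Concretely, the max-cut property only guarantees $\deg_B(p)\le k/2$, so $\sum_{p\in B}\deg_B(p)^2$ can be as large as $kf_B$, and then your estimate degenerates to $T_{1,B}\ge (b-1-k)f_B$, which is $0$ when $b=k+1$. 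For instance, if $G[B]$ contains a $K_{k/2,k/2}$, the $(1,2)$-triangles can genuinely vanish, that piece of $G[B]$ is triangle-free, and all triangles must be found among $T_{1,A}$ and $k_3(G[A])$---for which you have no usable bound, since the analogous summands $(a-1)-\deg_A(p)-\deg_A(q)$ may be negative. No mechanism for trading the four terms of the decomposition against one another is provided. A second, structural problem: your opening reduction (``it suffices to exhibit one vertex with $e(N(v))\ge\frac{k(k-2)}{4}$'') cannot be carried out in general. In the circulant graph on $\mathbb{Z}_{2k+1}$ with connection set $\{\pm1,\pm3,\dots,\pm(k-1)\}$, which is $k$-regular, every vertex lies in exactly $\tfrac12\cdot\tfrac{k}{2}(\tfrac{k}{2}-1)$ triangles---half of what you need---even though the graph has $\Theta(k^3)$ triangles in total (for $k=4$ this is the circulant $C_9(1,3)$: three triangles, one through each vertex). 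So no single-vertex certificate exists there.

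The paper's proof runs in exactly the opposite direction and needs no case distinction. Suppose $k_3(G)<\tfrac k2(\tfrac k2-1)$ and let $v$ be a vertex lying in the fewest triangles; averaging gives $x:=e(N(v))\le 3k_3(G)/n<\tfrac k2-1$, hence $x\le\tfrac k2-2$. Your own identity yields $e(N_2(v))=x+\tfrac k2$, where $N_2(v)=V\setminus N[v]$. Each edge of $G[N_2(v)]$ lies in at least $\tfrac k2-x-1$ triangles with apex in $N(v)$, and each edge of $G[N(v)]$ lies in at least $k-x-2$ triangles with apex in $\{v\}\cup N_2(v)$; these two families of triangles are disjoint, so $k_3(G)\ge x(k-x-2)+(x+\tfrac k2)(\tfrac k2-x-1)=\tfrac k2(\tfrac k2-1)+x(k-2x-3)\ge\tfrac k2(\tfrac k2-1)$, a contradiction. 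Equality then forces $x=0$ and $G[N_2(v)]$ to be a star, which pins down the extremal graph. The lesson is that anchoring the count at the sparsest vertex keeps every local estimate non-degenerate, whereas a cut-based decomposition must fight exactly the dense configurations where its estimates collapse.
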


\begin{proof}
	Assume there is such a graph $G$ with $k_3(G) < \frac{k}2 \left( \frac{k}2-1 \right).$
	Take a vertex $v$ of $G$ for which the number of triangles containing $v$, $k_3(v)$, is minimal. In particular $x=k_3(v)< \frac{3}{2k+1} \frac{k}2 \left( \frac{k}2-1 \right) < \frac{k}2-1.$
	Since $G=(V,E)$ is $k$-regular, $\lvert N(v) \rvert = k$ and  $\lvert N_2(v) \rvert = k$, where $N_2(v)=V \backslash N[v]$. There are $x$ edges in $G[N(v)]$ and $x+\frac k2$ edges in $G[N_2(v)]$.
	Note that the two endvertices of any of these $x+\frac k2$ edges in $G[N_2(v)]$ have at least $2k-(x+\frac k2+1)-k=\frac k2-x-1$ common neighbours in $N(v).$ 
	Similarly, every edge among the $x$ edges in $G[N(v)]$ is contained in at least $2k-(x+1)-(k+1)=k-x-2$ triangles.
	This implies that $G$ contains at least $x(k-x-2)+(x+\frac k2)(\frac k2-x-1)=\frac{k}2 \left( \frac{k}2-1 \right)+x(k-2x-3)$ triangles.
	Since $2x \le k-4$, the result follows.
	We attain equality if $k_3(v)=0$ and $G[N_2(v)]$ is a star and the vertices in $N(v)$ are connected to all vertices in $N_2(v)$ except one in such a way that they have total degree $k$, so the extremal graph is of the desired form.
\end{proof}

\section{Some other Gan--Loh--Sudakov-type problems}\label{sec:otherGLStypeQ}

The general Tur\'an-type study of Alon and Shikhelman~\cite{AS} asks to determine the quantity $\ex(n,T,H)$, the maximum number of copies of $T$ in an $H$-free graph on $n$ vertices.
The Gan--Loh--Sudakov problem can be formulated thus as the special case of determining $\ex(n,K_{t},K_{1,r+1})$.
If $n$ is a multiple of $r+1$, it is trivial that the union of disjoint $K_{r+1}$ is extremal since for every vertex $v$ the construction attains the maximum number of copies of $K_t$ containing $v$.
By looking to the neighbourhood of any vertex, the following cases are immediate as well.

\begin{prop}
	The quantity $\ex(n,K_{1,s},K_{1,r+1})$ is maximized by any $r$-regular graph on $n$ vertices.
	
	For every tree $T$ with maximum degree at most $r$ and diameter $d$, the quantity $\ex(n,T,K_{1,r+1})$ is maximized by any $r$-regular graph of girth at least $d+1$.
\end{prop}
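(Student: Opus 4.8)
The plan is to reduce both statements to a vertex-by-vertex (more precisely, an edge-by-edge) count of partial embeddings, exactly as the phrase ``looking to the neighbourhood of any vertex'' suggests; throughout, recall that a graph is $K_{1,r+1}$-free precisely when it has maximum degree at most $r$. For the star case, a copy of $K_{1,s}$ is determined by a centre $v$ together with an $s$-subset of $N(v)$, so the number of copies of $K_{1,s}$ in $G$ equals $\sum_{v}\binom{\deg(v)}{s}$. Since $x\mapsto\binom{x}{s}$ is nondecreasing and $\binom{\deg(v)}{s}\le\binom{r}{s}$ under $\deg(v)\le r$, this sum is at most $n\binom{r}{s}$, with equality (for $r\ge s$) iff every vertex has degree exactly $r$; hence any $r$-regular graph is optimal. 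This is in fact the $T=K_{1,s}$ instance of the general statement, the girth hypothesis being vacuous since $K_{1,s}$ has diameter $2$.

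For a general tree $T$ I would fix a root $\rho$ and process $V(T)$ in breadth-first order, counting embeddings (injective homomorphisms) $\phi\colon T\to G$ by building them up: the image $\phi(\rho)$ is any of the $n$ vertices, and each non-root vertex $u$ with parent $p$ must map to a neighbour of $\phi(p)$ distinct from all previously placed images. For injective $\phi$ these forbidden images always include the grandparent image (a neighbour of $\phi(p)$) and the images of the earlier-processed siblings of $u$, so $\phi(u)$ has at most $\deg(\phi(p))-1$ minus the number of earlier siblings choices. Multiplying out and using $\deg\le r$ gives, for \emph{every} $G$ with $\Delta(G)\le r$,
\[
\#\{\text{embeddings of }T\text{ into }G\}\ \le\ n\cdot B,\qquad B:=(r)_{\deg_T(\rho)}\prod_{u\ne\rho}(r-1)_{\deg_T(u)-1},
\]
where $(x)_k=x(x-1)\cdots(x-k+1)$. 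Dividing by $\lvert\mathrm{Aut}(T)\rvert$ bounds the number of copies of $T$.

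It remains to show that an $r$-regular graph $G$ of girth at least $d+1$ attains equality. Call a homomorphism \emph{admissible} if every non-root vertex avoids its grandparent's image and siblings receive distinct images; since $G$ is $r$-regular, the greedy count shows there are exactly $nB$ admissible homomorphisms (at each step precisely $r-1$ minus the number of earlier siblings neighbours of $\phi(p)$ remain). The crux is then the claim that every admissible homomorphism is injective, for then admissible homomorphisms and embeddings coincide and the count is exactly $nB$. Suppose $\phi(x)=\phi(y)$ with $x\ne y$, and let $x=w_0,\dots,w_\ell=y$ be the tree-path between them, so $\ell\le d$. At any internal vertex of this path the two path-neighbours are either a parent and a child (and the child-image avoids the grandparent-image by admissibility) or two children of a common vertex (distinct images by sibling-distinctness); either way the image walk $\phi(w_0),\dots,\phi(w_\ell)$ does not backtrack there. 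A closed non-backtracking walk of length $\ell\ge 1$ has length at least $3$ and contains a cycle of length at most $\ell\le d$, contradicting girth $\ge d+1$. Hence no collision occurs, so admissible homomorphisms are injective and the count is $nB$, matching the upper bound.

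I expect the non-backtracking/short-cycle step to be the main obstacle: one must verify that admissibility is exactly what rules out immediate backtracking along an \emph{arbitrary} tree-path, in particular at the path's apex, where the two relevant tree edges are sibling edges rather than an ancestor--descendant pair, and that the resulting closed walk genuinely forces a cycle no longer than the diameter. The threshold $d+1$ is then seen to be sharp: collisions travel along tree-paths of length at most $d$, so forbidding cycles of length at most $d$ suffices, whereas chords between images of a diametral pair (creating a $(d+1)$-cycle) are harmless, since copies of $T$ need not be induced. Granting the claim, every $r$-regular graph of girth at least $d+1$ achieves $nB$ embeddings, hence $nB/\lvert\mathrm{Aut}(T)\rvert$ copies of $T$, which is the maximum; this completes the proof, modulo the standard fact that such regular graphs exist once $n$ is large.
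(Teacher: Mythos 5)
Your proposal is correct and takes essentially the same approach as the paper, which dismisses the proposition as immediate ``by looking to the neighbourhood of any vertex'': your rooted greedy count of embeddings (at most $(r)_{\deg_T(\rho)}\prod_{u\ne\rho}(r-1)_{\deg_T(u)-1}$ continuations per choice of $\phi(\rho)$, with equality in the $r$-regular case) is exactly the local counting the paper has in mind. The girth-based injectivity step --- admissibility forces the image of any tree-path to be a non-backtracking closed walk of length at most $d$, which would contain a cycle of length at most $d$ --- is the correct formalization of why girth at least $d+1$ suffices, and it holds up under scrutiny.
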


Note that if $nr$ is odd, an extremal graph will have exactly one vertex with degree $r-1$.

When $n$ is not a multiple of $r+1$, Chase's theorem~\cite{C19} (formerly the conjecture of Gan, Loh and Sudakov~\cite{GLS}) implies that the extremal graph is the union of the maximum number of copies of $K_{r+1}$, being the unique graph maximizing $\frac{\ex(n,K_{t},K_{1,r+1})}{n}$ and a residue graph (which is a complete graph as well). The maximum of this normalized quantity can be found easily for complete bipartite graphs as well by looking locally to the neighbourhood of any vertex.

\begin{prop}
For every $r \ge a,b\ge 2$, $\frac{\ex(n,K_{a,b},K_{1,r+1})}{n}$ is maximized by the graph $K_{r,r}$. Furthermore this is the unique connected extremal graph for the quantity.
\end{prop}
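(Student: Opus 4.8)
The plan is to reduce the statement to a per-vertex inequality and then analyse equality. Recall that a graph is $K_{1,r+1}$-free precisely when its maximum degree is at most $r$, so the task is to maximize the number $k_{a,b}(G)$ of copies of $K_{a,b}$ over all $G$ with $\Delta(G)\le r$, normalized by the order. For a vertex $v$, let $c_G(v)$ denote the number of copies of $K_{a,b}$ containing $v$. Since every copy has $a+b$ vertices, double counting gives $(a+b)\,k_{a,b}(G)=\sum_{v}c_G(v)$. As $K_{r,r}$ is vertex-transitive, $c_{K_{r,r}}(v)=\frac{(a+b)k_{a,b}(K_{r,r})}{2r}$ is the same for every $v$; hence it suffices to prove the local bound $c_G(v)\le c_{K_{r,r}}(v)$ for every vertex $v$ of every $G$ with $\Delta(G)\le r$, because then $\frac{k_{a,b}(G)}{\sizeof{V(G)}}\le \frac{k_{a,b}(K_{r,r})}{2r}$, and taking disjoint copies of $K_{r,r}$ (when $2r\mid n$) shows this density is attained.

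For the local bound I would split the copies through $v$ according to which part of $K_{a,b}$ contains $v$ (assume first $a\neq b$, so each copy has a unique bipartition and is counted once). If $v$ lies in the part of size $a$, the opposite part is a $b$-subset $B\subseteq N(v)$, of which there are at most $\binom{\deg v}{b}\le\binom{r}{b}$, and the remaining $a-1$ vertices on $v$'s side must lie in $\bigl(\bigcap_{u\in B}N(u)\bigr)\setminus\{v\}$; since $\bigcap_{u\in B}N(u)\subseteq N(u)$ has size at most $r$ and contains $v$, this set has size at most $r-1$, giving at most $\binom{r-1}{a-1}$ choices. This yields at most $\binom{r}{b}\binom{r-1}{a-1}$ copies with $v$ on the $a$-side, and symmetrically at most $\binom{r}{a}\binom{r-1}{b-1}$ with $v$ on the $b$-side. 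Adding, $c_G(v)\le \binom{r}{b}\binom{r-1}{a-1}+\binom{r}{a}\binom{r-1}{b-1}$, which is exactly $c_{K_{r,r}}(v)$ computed directly. For $a=b$ the two sides are interchangeable, so the same two estimates each count every copy through $v$ once; dividing by the factor $2$ gives the matching bound, and the argument is otherwise identical.

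For uniqueness, suppose $G$ is connected and attains equality in the density bound; then $c_G(v)=c_{K_{r,r}}(v)$ for every $v$, forcing equality in each estimate above. Tightness of $\binom{\deg v}{b}=\binom{r}{b}$ (using $b\ge 2$) forces $\deg v=r$ for all $v$, so $G$ is $r$-regular. Tightness of the common-neighbourhood estimate forces $\sizeof{\bigcap_{u\in B}N(u)}=r$ for every $b$-subset $B\subseteq N(v)$; since each $\sizeof{N(u)}=r$, all vertices $u$ occurring in such a $B$ share one neighbourhood $Z\ni v$. Letting $B$ range over the $b$-subsets of $N(v)$ (possible as $r\ge b\ge 2$) shows $N(w)=Z$ for all $w\in N(v)$, and then regularity gives $N(z)=N(v)$ for all $z\in Z$; a one-line check using the absence of loops shows $N(v)\cap Z=\varnothing$. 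Hence the component of $v$ is the complete bipartite graph on parts $N(v)$ and $Z$, that is $K_{r,r}$, and connectedness gives $G\cong K_{r,r}$.

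The main obstacle is the equality analysis rather than the counting bound: one must convert the two numerical tightness conditions into the rigid statement that all neighbours of $v$ share a common neighbourhood, and then confirm that this common neighbourhood is disjoint from $N(v)$, so that the component is genuinely $K_{r,r}$ and not some other $r$-regular configuration. The bookkeeping separating the cases $a\neq b$ and $a=b$ is routine but must be carried out with care to pin down the constant in $c_{K_{r,r}}(v)$ correctly.
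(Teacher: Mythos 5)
Your proof is correct and takes exactly the route the paper intends: the paper gives no written argument beyond the remark that the result follows ``by looking locally to the neighbourhood of any vertex,'' and your per-vertex bound $c_G(v)\le\binom{r}{b}\binom{r-1}{a-1}+\binom{r}{a}\binom{r-1}{b-1}$ (halved when $a=b$) combined with averaging is precisely that local argument, carried out in full. Your equality analysis, forcing $r$-regularity and a common neighbourhood $Z$ for all of $N(v)$ and hence identifying the component of $v$ as $K_{r,r}$, correctly supplies the uniqueness claim as well.
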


In particular we know the extremal graphs for the quantity $\frac{\ex(n,H,K_{1,r+1})}{n}$ when $H \in \{C_3, C_4\}.$
So one can wonder about cycles in general.

\begin{q}\label{q:MaxnormalizedNrCycle}
	For every even cycle $C_m$, for sufficiently large $r$, is $\frac{\ex(n,C_m,K_{1,r+1})}{n}$ maximized by the graph $K_{r,r}$?\\
	For every odd cycle $C_m$, for sufficiently large $r$, is $\frac{\ex(n,C_m,K_{1,r+1})}{n}$ maximized by the graph $K_{r+1}$?
\end{q}

If the latter question is positive for the cycle $C_5$, the following proposition would imply that the analogue of Chase's theorem would not hold, as e.g.~$K_{r+1}+K_1$ is not necessarily maximizing the number of $C_5$s for $n=r+2$ as the following analysis shows.

\begin{prop}
 
Let $r \ge 6$. Then
\[
\ex(r+2,C_5,K_{1,r+1}) = \left.
\begin{cases}
12 \binom{r+1}5 & \text{for $r$ being odd. } \\
\frac{r(r^2-4)(r^2-5r+9)}{10} & \text{for $r$ being even. }
\end{cases}\right.
\]
The extremal graphs are respectively $K_{r+1}$ and $K_{r+2} \backslash M$ for a matching $M$.
\end{prop}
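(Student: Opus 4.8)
The plan is to pass to the complement. Writing $n=r+2$, a graph $G$ on $n$ vertices is $K_{1,r+1}$-free exactly when $\Delta(G)\le r=n-2$, i.e.\ when $F:=\overline G$ has minimum degree at least $1$; and maximizing the number of $5$-cycles of $G$ is the same as maximizing the number of $5$-cycles of $K_n$ that avoid every edge of $F$. Since deleting an edge of $F$ only adds edges to $G$ and hence cannot decrease its $C_5$-count, an optimal $F$ may be taken edge-minimal subject to $\delta(F)\ge1$, that is, a minimal edge cover of $K_n$. As minimal edge covers are precisely the spanning star forests, I would reduce the whole problem to: over all vertex-disjoint unions of stars $F=\bigsqcup_i K_{1,\ell_i}$ (with $\ell_i\ge1$ and $\sum_i(\ell_i+1)=n$), minimize the number $D(F)$ of $5$-cycles of $K_n$ meeting $F$.

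Second, I would write $D(F)$ by inclusion--exclusion over the edges of $F$. A single edge lies on $6\binom{n-2}{3}$ of the $5$-cycles of $K_n$, two disjoint edges on $4(n-4)$, two adjacent edges (a path on three vertices) on $(n-3)(n-4)$, and a path on three vertices together with a further disjoint edge on exactly $2$; no $5$-cycle can carry more non-edges of a star forest than this, so the expansion terminates. Two edges of $F$ are adjacent precisely when they lie in a common star, which turns the sum into an explicit polynomial in the leaf-numbers $\ell_i$: writing $L=\sum_i\ell_i=n-p$ (with $p$ the number of stars) and $P=\sum_i\binom{\ell_i}{2}$, one obtains a closed form of the shape $D=L(n-2)(n-3)(n-4)-\binom{L}{2}4(n-4)+P(n-4)(7-n)+2\sum_i\binom{\ell_i}{2}\bigl(L-\ell_i\bigr)$. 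Specialising recovers the two candidates: the single star $K_{1,n-1}$ gives $D=\tfrac12(n-1)(n-2)(n-3)(n-4)$, i.e.\ $G=K_{n-1}\cup K_1$ with $12\binom{n-1}{5}=12\binom{r+1}{5}$ copies of $C_5$, while the perfect matching gives $D=\tfrac12 n(n-2)(n-4)^2$, i.e.\ $\frac{r(r^2-4)(r^2-5r+9)}{10}$ copies of $C_5$.

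Finally I would minimize this polynomial. For a fixed number of stars $p$ (hence fixed $L$) the shape-dependent part is $\sum_i\binom{\ell_i}{2}\bigl(C-2\ell_i\bigr)$ with $C$ constant, and a short smoothing computation (moving a leaf from a smaller to a larger star) shows it is minimized when the leaves are concentrated into one big star with the remaining stars single edges; hence the optimum lives in the one-parameter family consisting of one $K_{1,\,n-2p+1}$ together with $p-1$ disjoint edges. Substituting reduces $D$ to an explicit cubic $D(p)$ on $1\le p\le\floor{n/2}$, whose minimum over this range is attained at an endpoint, namely $p=1$ (the big star) or $p=\floor{n/2}$ (the perfect matching when $n$ is even, and $K_{1,2}$ together with a matching when $n$ is odd). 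Comparing the two endpoints, $n(n-4)<(n-1)(n-3)$ makes the matching win when $n$ is even, whereas when $n$ is odd a direct comparison of the big star against the $K_{1,2}$-plus-matching cover makes the star win; this parity split is exactly the stated dichotomy, and uniqueness of the extremal graph follows from strictness of these inequalities for $r\ge6$. I expect the genuine obstacle to be this last optimization: the function $D(p)$ is a cubic with positive leading coefficient, so it is not globally convex, and one must verify that its interior critical point never undercuts both endpoints on the admissible range, together with a separate check of the small cases $r\in\{6,7\}$.
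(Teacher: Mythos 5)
Your proposal is correct and follows essentially the same route as the paper's own proof: complementation to a minimum-degree-one complement, reduction to disjoint unions of stars, the same inclusion--exclusion count (your closed form for $D$ is an equivalent rewriting of the paper's equation~\eqref{eq:nrC5PIE}), concentration of the leaves into a single big star for a fixed number $p$ of stars via smoothing, and finally an endpoint optimization over $p$ with the same parity comparison. The obstacle you flag at the end resolves exactly as one would hope: the paper shows the avoiding count $g(n,k)=12\binom{n}{5}-D(k)$ has positive second derivative in $k$ on $1\le k\le n/2$ for $n\ge 9$, i.e.\ your cubic $D(p)$ is \emph{concave} (not convex) on the admissible range, so its interior critical point is a maximum and the minimum automatically sits at an endpoint; the only point you gloss over is uniqueness, which also requires the initial edge-deletion step to be strict --- the paper gets this by noting that every edge of $K_{r+1}$ and of $K_{r+2}\setminus M$ lies on a $C_5$ --- and the smallest case $n=8$ (i.e.\ $r=6$) is checked by computer rather than by the analytic argument.
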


\begin{proof}
	We start with some observations to get some structure of the extremal graphs.
	Note that a graph $G$ of order $n=r+2$ has maximum degree at most $r$ if and only if the complement $\overline G$ has minimum degree $1$.
	If $H$ is a subgraph of $G$, then the number of $C_5$s in $G$ is at least the number of $C_5$s in $H.$
	So if $\overline G$ has an edge for which both of its endvertices have degree at least $2$, we can delete that edge without decreasing the number of $C_5$s in $G.$
	Repeating this, we end with $\overline G$ being the disjoint union of stars.
	Let $\overline G= \sum_{i=1}^k S_{a_i +1}.$ Here $a_i \ge 1$ for every $i$. 
	Note that $A=\sum_i a_i =n-k$ and $k \le \frac n2.$
	Using the principle of inclusion--exclusion, we find that the number of $C_5$s in $G$ equals 
	\begin{equation}\label{eq:nrC5PIE}
	12\binom{n}5 - 6A \binom{n-2}3 + 2\sum_i \binom{a_i}2 \binom{n-3}2 +2 \sum_{i \not=j} a_i a_j (n-4) - 2\sum_{i \not=j} \binom{a_i}2 a_j.
	\end{equation}

	\begin{claim}
		Let $n \ge 9$. For fixed $k$, Equation~\eqref{eq:nrC5PIE} attains its maximum over all $a_i \ge 1$ if and only if all but at most one $a_i$ are equal to $1$. 
	\end{claim}
\begin{proof}
	Note that this is obviously true for $k=1$. Also we note that $A=n-k$ is fixed.
	Now assume $k \ge 2$ and $a_i, a_j >1.$
	The part of Equation~\eqref{eq:nrC5PIE} which depends on $a_i$ and $a_j$ for fixed sum $a_i+a_j$, equals
	\begin{align*}
	&\left((n-3)(n-4) -2A\right)\left( \binom{a_i}2 + \binom{a_j}2 \right) +4a_ia_j(n-4) +2a_i \binom{a_i}2 + 2a_j \binom{a_j}2\\
	=&\left((n-3)(n-4) -2A-4(n-5)\right)\left( \binom{a_i}2 + \binom{a_j}2 \right) +\\ &4(n-5)\binom{a_i+a_j}2 + 4a_ia_j +2a_i \binom{a_i}2 + 2a_j \binom{a_j}2.\\
	\end{align*}
	We have 
	$$(n-3)(n-4) -2A-4(n-5)\ge (n-3)(n-4) -2(n-2)-4(n-5)=n^2-13n+36$$ which is non-negative for $n \ge 9.$
	Also $\binom{a_i+a_j -1}2+\binom{1}{2} > \binom{a_i}2+\binom{a_j}2$ when $a_i, a_j >1$ since $x(x-1)$ is a strictly convex function.
	Furthermore, let $f(x,y)=4xy +2x \binom{x}2 + 2y \binom{y}2.$ Then $f(a_i+a_j -1,1)-f(a_i,a_j)=3(a_i-1)(a_j-1)(a_i+a_j-1)>0.$
	So substituting $(a_i,a_j)$ by $(a_i+a_j-1,1)$ implies an increase of Equation~\eqref{eq:nrC5PIE} from which the result follows.
\end{proof}

	Now we can focus on $a_1=a_2=\ldots=a_{k-1}=1$ and $a_k=n-2k+1.$
	In this case Equation~\eqref{eq:nrC5PIE} reduces to 
	\begin{align*}
	g(n,k)=&12\binom{n}5 - 6(n-k) \binom{n-2}3 + 2 \binom{n-2k+1}2 \binom{n-3}2 \\&+4 \left( \binom{k-1}2 + (k-1)(n-2k+1)\right) (n-4) - 2(k-1) \binom{n-2k+1}{2}.
	\end{align*}
	Note that $\frac{d^2 g(n,k)}{dk^2}=4n^2-24k-32n+108 \ge 4n^2-44n+108=4(n-2)(n-9)+36$ is positive for $n \ge 9.$
	This implies that $g(n,k)$ is strictly convex and hence takes its maximum at $k=1$ or $k= \lfloor \frac n2 \rfloor.$
	Since $g(n, \frac n2) > g(n,1)> g(n, \frac {n-1}2)$ for $ n\ge 9$, we conclude.
	For every edge in both $K_{n-1}$ and $K_{n} \backslash M$ there is a $C_5$ containing that edge, from which we conclude that the extremal graphs are unique in these cases.
	For $n \le 8$, the extremal graphs can easily be computed with computer software such as Sage.	
\end{proof}

\section{Conclusion}

Our work was motivated by a Gan--Loh--Sudakov-type problem where we are given both the number of edges and vertices, in addition to the maximum degree, after~\cite{KR}. By focusing on the regular case and looking to the complement of the extremal graphs, this led us to the notion of regular Tur\'an numbers. This has resulted in a number of interesting regular versions of classical Tur\'an-type results.

The Gan--Loh--Sudakov conjecture was solved by Chase~\cite{C19} in $2019$, very shortly after the appearance of our manuscript on arXiv.
Also the size variation proposed in~\cite{KR} was solved by Chakraborti and Chen~\cite{CC20}.
The variations which we considered here are still open. We note that the main ingredients used in~\cite{GLS,C19,CC20} are not enough to tackle the problem with both order and size.
In particular, our observations in Section~\ref{sec:givenNM} show that the extremal graphs are not that easily described so as to start computations in an inductive way.

Some related questions have arisen, which we suspect should provoke further investigations, particularly with respect to the regular Tur\'an numbers.
In particular, it would be interesting to resolve Question~\ref{q:dich_girth_ge5}, as this would more precisely characterise the regular Tur\'an numbers for graphs of chromatic number $3$. It would also be natural to investigate bipartite graphs.

We also highlight Conjecture~\ref{conj:exactSat}, which would imply both the exact saturation result of the regular Mantel's theorem and the exact form in the regular case of the Gan--Loh--Sudakov-type question given both the order and size.
A last natural problem is Question~\ref{q:MaxnormalizedNrCycle}, being morally the right Gan--Loh--Sudakov question for cycles instead of cliques.

\subsection*{Notes added}
During the preparation of this manuscript, we learned of the concurrent and independent works by Gerbner, Patk\'os, Tuza and Vizer~\cite{GPTV19} and by Caro and Tuza~\cite{CT19}.
With a different application in mind, the regular Tur\'an number was introduced in~\cite{GPTV19} in an alternative formulation as the maximum number of edges in a regular $H$-free graph: $\rex(n,H)=\frac 2n \ex_r(n,H)$.
Caro and Tuza~\cite{CT19} have also determined the regular Tur\'an numbers of complete graphs.

We point out that Theorem~\ref{thr:ex_r_OddCycles} proves Conjecture~$1$ of~\cite{CT19} for large $n$.
For small $n$, the conjecture is false. For example $C_m$ does not contain a $C_g$ when $g<m<2g+4$ nor does the disjoint union of $b$ cliques $K_{m}$ do for $m<g$, leading to another counterexample when $bm<\frac{(m-1)}2(g+2)$.
We also note that Theorem~\ref{thr:RegularErdosStone3} provides progress towards Problem~$1$ in~\cite{CT19}. This problem has been reduced to a more concrete form in Question~\ref{q:dich_girth_ge5}.

A number of relevant works have appeared following the public posting of our manuscript. Besides the works of Chase~\cite{C19} and Chakraborti and Chen~\cite{CC20} already mentioned, we point out that Tait and Timmons~\cite{TT20} have already made a first effort at investigating the regular Tur\'an numbers for bipartite graphs.

\bibliographystyle{abbrv}
\bibliography{RegularTuranNumbers}

\begin{thebibliography}{10}

\bibitem{AS}
N.~Alon and C.~Shikhelman.
\newblock Many {$T$} copies in {$H$}-free graphs.
\newblock {\em J. Combin. Theory Ser. B}, 121:146--172, 2016.

\bibitem{AES74}
B.~Andr\'{a}sfai, P.~Erd\H{o}s, and V.~T. S\'{o}s.
\newblock On the connection between chromatic number, maximal clique and
  minimal degree of a graph.
\newblock {\em Discrete Math.}, 8:205--218, 1974.

\bibitem{Bau84}
D.~Bauer.
\newblock Extremal nonbipartite regular graphs of girth {$4$}.
\newblock {\em J. Combin. Theory Ser. B}, 37(1):64--69, 1984.

\bibitem{CT19}
Y.~{Caro} and Z.~{Tuza}.
\newblock {Regular Tur{\'a}n numbers}.
\newblock {\em arXiv e-prints}, page arXiv:1911.00109, Oct. 2019.

\bibitem{CC20}
D.~{Chakraborti} and D.~Q. {Chen}.
\newblock {Many cliques with few edges in a graph where the maximum degree is
  upper-bounded}.
\newblock {\em arXiv e-prints}, page arXiv:2003.07943, Mar. 2020.

\bibitem{C19}
Z.~{Chase}.
\newblock {A Proof of the Gan-Loh-Sudakov Conjecture}.
\newblock {\em arXiv e-prints}, page arXiv:1912.01600, Dec 2019.

\bibitem{DTF18}
D.~F. {Daza}, C.~A. {Trujillo}, and F.~{Andr{\'e}s Benavides}.
\newblock {Sidon sets and $C_4$-saturated graphs}.
\newblock {\em arXiv e-prints}, page arXiv:1810.05262, Oct 2018.

\bibitem{Erd62}
P.~Erd\H{o}s.
\newblock On a theorem of {R}ademacher-{T}ur\'{a}n.
\newblock {\em Illinois J. Math.}, 6:122--127, 1962.

\bibitem{ES46}
P.~Erd\"{o}s and A.~H. Stone.
\newblock On the structure of linear graphs.
\newblock {\em Bull. Amer. Math. Soc.}, 52:1087--1091, 1946.

\bibitem{GLS}
W.~Gan, P.-S. Loh, and B.~Sudakov.
\newblock Maximizing the number of independent sets of a fixed size.
\newblock {\em Combin. Probab. Comput.}, 24(3):521--527, 2015.

\bibitem{GNPV}
D.~{Gerbner}, D.~T. {Nagy}, B.~{Patk{\'o}s}, and M.~{Vizer}.
\newblock {On the maximum number of copies of {$H$} in graphs with given size
  and order}.
\newblock {\em arXiv e-prints}, Oct. 2018.

\bibitem{GPTV19}
D.~{Gerbner}, B.~{Patk{\'o}s}, Z.~{Tuza}, and M.~{Vizer}.
\newblock {Singular Tur{\'a}n numbers and WORM-colorings}.
\newblock {\em arXiv e-prints}, page arXiv:1909.04980, Sept. 2019.

\bibitem{Good}
A.~W. Goodman.
\newblock On sets of acquaintances and strangers at any party.
\newblock {\em Amer. Math. Monthly}, 66:778--783, 1959.

\bibitem{Hag82}
R.~H\"{a}ggkvist.
\newblock Odd cycles of specified length in nonbipartite graphs.
\newblock In {\em Graph theory ({C}ambridge, 1981)}, volume~62 of {\em
  North-Holland Math. Stud.}, pages 89--99. North-Holland, Amsterdam-New York,
  1982.

\bibitem{Kahn01}
J.~Kahn.
\newblock An entropy approach to the hard-core model on bipartite graphs.
\newblock {\em Combin. Probab. Comput.}, 10(3):219--237, 2001.

\bibitem{Kar}
J.~{Karamata}.
\newblock {Sur une in\'egalit\'e r\'elative aux fonctions convexes.}
\newblock {\em {Publ. Math. Univ. Belgrade}}, 1:145--148, 1932.

\bibitem{KR}
R.~Kirsch and A.~J. Radcliffe.
\newblock Many triangles with few edges.
\newblock {\em Electron. J. Combin.}, 26(2):Paper 2.36, 23, 2019.

\bibitem{KST54}
T.~K\"{o}vari, V.~T. S\'{o}s, and P.~Tur\'{a}n.
\newblock On a problem of {K}. {Z}arankiewicz.
\newblock {\em Colloq. Math.}, 3:50--57, 1954.

\bibitem{LM18}
C.-H. Liu and J.~Ma.
\newblock Cycle lengths and minimum degree of graphs.
\newblock {\em J. Combin. Theory Ser. B}, 128:66--95, 2018.

\bibitem{LS83}
L.~Lov\'{a}sz and M.~Simonovits.
\newblock On the number of complete subgraphs of a graph. {II}.
\newblock In {\em Studies in pure mathematics}, pages 459--495. Birkh\"{a}user,
  Basel, 1983.

\bibitem{Man07}
W.~{Mantel}.
\newblock {Vraagstuk XXVIII.}
\newblock {\em {Wiskundige Opgaven}}, 10:60--61, 1907.

\bibitem{Raz08}
A.~A. Razborov.
\newblock On the minimal density of triangles in graphs.
\newblock {\em Combin. Probab. Comput.}, 17(4):603--618, 2008.

\bibitem{Rei16}
C.~Reiher.
\newblock The clique density theorem.
\newblock {\em Ann. of Math. (2)}, 184(3):683--707, 2016.

\bibitem{TT20}
M.~{Tait} and C.~{Timmons}.
\newblock {Regular Tur{\'a}n numbers of complete bipartite graphs}.
\newblock {\em arXiv e-prints}, page arXiv:2005.02907, May 2020.

\bibitem{Tur41}
P.~Tur\'{a}n.
\newblock Eine {E}xtremalaufgabe aus der {G}raphentheorie.
\newblock {\em Mat. Fiz. Lapok}, 48:436--452, 1941.

\bibitem{VZ77}
H.-J. Voss and C.~Zuluaga.
\newblock Maximale gerade und ungerade {K}reise in {G}raphen. {I}.
\newblock {\em Wiss. Z. Techn. Hochsch. Ilmenau}, 23(4):57--70, 1977.

\bibitem{Zha94}
G.-H. Zhang.
\newblock Extremal regular graphs with prescribed odd girth.
\newblock {\em J. Combin. Theory Ser. B}, 60(2):222--238, 1994.

\bibitem{Zhao10}
Y.~Zhao.
\newblock The number of independent sets in a regular graph.
\newblock {\em Combin. Probab. Comput.}, 19(2):315--320, 2010.

\bibitem{Znam63}
{\u{S}}.~Zn{\'a}m.
\newblock On a combinatorical problem of {K}. {Z}arankiewicz.
\newblock In {\em Colloquium Mathematicum}, volume~11, pages 81--84. Instytut
  Matematyczny Polskiej Akademii Nauk, 1963.

\end{thebibliography}

\end{document}